\newcommand\supp{\mathrm{supp}}
\newtheorem{theoreme}{Theorem}[section] %
\newtheorem{proposition}[theoreme]{Proposition} %
\newtheorem{corollary}[theoreme]{Corollary} %
\newtheorem{lemme}[theoreme]{Lemma} %
\newtheorem{definition}{Definition}[section] %
\newtheorem{remark}[theoreme]{Remark} %
\newcommand\sk{\smallskip}
\newcommand\R{\mathbb{R}} 
\newcommand\dimm{\underline{\dim}_H}
\renewcommand\widering[1]{\ring{#1}}
\author{E. Daviaud, Uliège}
\begin{document}
\title{Random covering by rectangles  on  self-similar carpets}
\maketitle

\begin{abstract}
In this article, given a base-$b$ self-similar set $K$, we study the random covering of $K$ by horizontal or vertical rectangles, with respect to the Alfhors-regular measure on $K$, and the rectangular shrinking target problem on $K$.   
\end{abstract}

\section{Introduction}

The metric approximation theory aims at estimating the dimension of sets of elements which are approximable at a certain rate by a sequence of particular points of interest. More precisely, given $d\in \mathbb{N}$, $(x_n)_{n\in\mathbb{N}}\in\Big(\mathbb{R}^{d}\Big)^{\mathbb{N}}$ and $\psi:\mathbb{N}\to \mathbb{R}_+$ a mapping satisfying $\lim_{n\to +\infty}\psi(n)=0$, the set $E_{\psi}$ of elements approximable at rate  $\psi$ by the sequence $(x_n)_{n\in \mathbb{N}}$ is defined as  $$\dim_H E_{\psi}:=\left\{y\in\mathbb{R}^d : \ \vert\vert y -x_n\vert\vert_{\infty} \leq \psi(n)\text{ i.o. }\right\},$$
where $i.o.$  ($``$ infinitely often $"$) means that the inequality holds for infinitely many $n$. Such problems were originally born in Diophantine approximation, where one studies sets of real numbers or vectors approximable by rational numbers or vectors at a given speed rate. These questions are also natural in multifractal analysis, as for many mappings, the regularity at a given point depends on its rate of approximation by specific dyadic numbers or rational numbers (see \cite{JaffRiemann} for instance) and in dynamical and random approximation, as, for instance, given an ergodic system $(T,\mu),$  the local dimension at a given point $x$  depends on its approximation rate by typical $\mu$ orbits, provided that $\mu$ mixes sufficiently fast (see \cite{Galadim,Edergo}). In particular the theory of random approximation as raised many interests these last 30 years and has known recently many developments. Given $\mu \in\mathcal{M}(\mathbb{R}^d)$ a probability measure and $(X_n)_{n\in\mathbb{N}}$ an i.i.d. sequence of random variable of law $\mu$ it was established in \cite{JJMS} that, denoting $\overline{\dim}_H \mu$ the upper Hausdorff dimension of $\mu$ (see Definition \ref{dim}), for any $\delta \geq \frac{1}{\overline{\dim}_H \mu},$ almost surely, one has  
\begin{equation}
\dim_H \left\{y \in\mathbb{R}^d : \ \vert\vert y-X_n \vert\vert_{\infty}\leq \frac{1}{n^{\delta}}\text{ i.o. }\right\}:=\limsup_{n\to +\infty}B(X_n,\frac{1}{n^{\delta}})=\frac{1}{\delta} .
\end{equation}
This result was later on generalized in \cite{JarvSeur}, showing that no simple formula (depending on geometric quantity related to $\mu$ one usually considers) holds in general in the case $\delta<\frac{1}{\overline{\dim}_H \mu}$, solving a conjecture of Eckström and Persson stated in \cite{EP}. The question of random approximation by other shapes than ball has also been considered. An important result regarding this topic was established in \cite{FengJ2Suo} in the case of measure which are not purely singular. In particular, the authors proved that, given $(X_n)_{n\in\mathbb{N}}$  i.i.d. of law the Lebesgue measure $\mathcal{L}^d$ on $\mathbb{T}^d$ and $(O_n)_{n\in\mathbb{N}}$   a sequence of open sets satisfying $\vert O_n \vert \to 0,$  one has almost surely,
\begin{equation}
\label{ThmFJJS}
\dim_H \limsup_{n\to+\infty}\Big(X_n +O_n \Big)=\inf\left\{t: \sum_{n\geq 1}\mathcal{H}^t_{\infty}(O_n)<+\infty\right\},
\end{equation}

where $\mathcal{H}^t_{\infty}(O_n)$ denotes the Hausdorff content of dimension $t$ of $O_n$ (Definition \ref{hcont}).

In the case where the measure $\mu$ is singular, very little is known about the approximation by specific sequences of open sets. Of course, there is no reason, a priori, for a tractable formula to hold  as in \eqref{ThmFJJS}, if the sets $(O_n)_{n\in\mathbb{N}}$ do not enjoy special properties with respect to $\mu.$ In the present article, we study the random approximation by horizontal (or vertical) rectangles on a self-similar carpet. Let $b\in\mathbb{N}$ be an integer and let $K \subset [0,1]^2$ be a base $b$-missing digit set (see Definition \ref{Defmiss}), $\mu_0$ the Alfhors-regular measure on $K$ and $(X_n)_{n\in\mathbb{N}}$ an i.i.d. sequence of law $\mu_0.$ Let us fix also  $\frac{1}{\dim_H K}\leq \tau_1 \leq \tau_2$, $\tau=\frac{\tau_2}{\tau_1}$ and define $$W_{\tau_1 ,\tau_2}=\limsup_{n\to + \infty}\Big(X_n +(-\frac{1}{n^{\tau_1}},\frac{1}{n^{\tau_1}})\times (-\frac{1}{n^{\tau_2}},\frac{1}{n^{\tau_2}})\Big) $$

and for $\alpha \geq 0,$ write $$v_{\tau}(\alpha)=\dim_H K +(\tau-2)\alpha -(\tau-1)D_{\pi_2 \mu_0}(\alpha),$$
where $\pi_2 \mu_0$ denotes the projection of $\mu_0$ along the $y$-axis and $D_{\pi_2 \mu_0}$ its multifractal spectrum (Definition \ref{defmultifractal}). Let $\beta_{\tau_1,\tau_2}$ be the smallest solution (when well defined) of $v_{\tau}(\beta)=\frac{1}{\tau_1}.$ 

Then, there exists $\kappa_2 \geq 0$ such that, almost surely 

\begin{equation*}
\dim_H W_{\tau_1,\tau_2}=\begin{cases} \frac{1}{\tau_1} \ \ \ \ \ \ \ \ \ \ \ \ \ \ \ \  \ \ \ \ \ \ \ \ \ \ \ \ \ \ \ \ \ \ \ \ \ \ \ \text{ if } \ \frac{1}{\tau_1}\leq \dim_H \mu_0 -\dim_H \pi_2 \mu_0, \\ 
\frac{1}{\tau_1}-(\tau-1)(\beta_{\tau_1,\tau_2}-D_{\pi_2 \mu_0}(\beta_{\tau_1,\tau_2})) \text{ if } \ \dim_H \mu_0 -\dim_H \pi_2 \mu_0\leq \frac{1}{\tau_1}\leq v_{\tau}(\kappa_2), \\ \frac{1+(\tau_2 -\tau_1)(s_0 -2\kappa_2+D_{\pi_2 \mu_0}(\kappa_2)))}{\tau_2} \ \ \ \ \ \ \ \ \ \ \ \text{ if } \ \frac{1}{\tau_1}\geq v_{\tau}(\kappa_2),  \end{cases}
\end{equation*}
For a more precise statement (in particular regarding the value of $\kappa_2$), we refer to Theorem \ref{ThmRand} below.

An other very natural approximation problem of dynamical nature is the shrinking target problem. It was originally defined in \cite{HV} when the $``$ targets $"$ are balls.  Given a measurable mapping $T:\mathbb{R}^d \to \mathbb{R}^d$, $x\in\mathbb{R}^d$ and $\psi:\mathbb{N}\to \mathbb{R}_+$ it consists in studying $$\dim_H \left\{y\in\mathbb{R}^d : \ T^{n}(y)\in B(x,\psi(n))\text{ i.o. }\right\}:=E_{\psi}(x).$$ 
When $K$ is a two-dimensional base $b$-missing digit set, associated with the IFS $S=\left\{f_1,...,f_m\right\}$ and $T:K \to K$ is defined by $T(y)=by$, on can rewrite $$E_{\psi}(x)=\limsup_{n\geq 1, (i_1,...,i_n)\in\left\{1,...,m\right\}^n}B\Big(f_{i_1}\circ...\circ f_{i_n}(x), \psi(n)b^{-n}\Big)$$
and it was established by Beresnevitch and Velani that, for every $x\in K,$ one has $$\dim_H E_{\psi}(x)=\frac{\dim_H K}{1+\liminf_{n\to+\infty}\frac{\log \psi(n)}{-n\log b}}.$$
The shrinking target problem on fractals has known many developments since and an interested reader may refer to \cite{Baker,AllenB,ED4,RamsBara} for various related results. In the present article, we study the shrinking target when  $``$the targets $"$ are taken to be rectangles rather than balls. Unlike the case of balls, our result will depend, in general, on the choice of  $x\in K$, the center of our targets. More precisely, let $\nu$ be a $\times b$-ergodic measure supported on $K$, let $1\leq \tau_1 \leq \tau_2$ be two real numbers and write $$V_{\tau_1,\tau_2}(x)=\limsup_{n\geq 1, (i_1,...,i_n)\in\left\{1,...,m\right\}^n}\Big(f_{i_1}\circ...\circ f_{i_n}(x) +(-b^{-\tau_1 n},b^{-\tau_1  n})\times (-b^{-\tau_2  n},b^{-\tau_2  n})\Big).$$
Notice that $$V_{\tau_1,\tau_2}(x)=\left\{y\in K : \ T^{n}(y)\in\Big(x +(-b^{-(\tau_1 -1)n},b^{-(\tau_1 -1) n})\times (-b^{-(\tau_2 -1) n},b^{-(\tau_2 -1) n})\Big) \right\}.$$
We prove the following: for $\nu$-almost every $x$, one has $$ \dim_H V_{\tau_1,\tau_2}(x)= \min\left\{\frac{\dim_H K}{\tau_1}, \frac{\dim_H K +(\tau_2 -\tau_1)(\dim_H K -\alpha_{\nu})}{\tau_2}\right\},$$
where $\alpha_{\nu}$ is the almost sure local dimension of $\pi_2(x)$ with respect to $\pi_2 \mu_0$ (see Proposition \ref{PropoASLocdim}). We refer to Theorem \ref{ThmTree} for a more general statement and Corollary \ref{Coropsiteta}  for a formula holding for general approximation function along the $x$ and $y$-axis. Notice that set of possible dimensions (which are all attained) for $V_{\tau_1,\tau_2}$ when $\nu$ varies in the set of ergodic measures is $$\left\{\min\left\{\frac{\dim_H K}{\tau_1}, \frac{\dim_H K +(\tau_2 -\tau_1)(\dim_H K -\alpha)}{\tau_2}\right\}, \ \alpha\in \mbox{ Spectr}(\pi_2 \mu_0)\right\},$$
where $ \mbox{ Spectr}(\pi_2 \mu_0)$ denotes the set of possible local dimensions of the measure $\pi_2 \mu_0 .$ Finally, we mention that this result regarding the rectangular shrinking  targets problems was also established, independently, by Allen, Jordan and Ward in \cite{AJW} (see Remark \ref{RemarkRectanShr} for more details). 

\bigskip

In Section \ref{SecReca}, we recall the basis of geometric measure theory, theory of self-similar fractals and multifractal analysis. Our main results regarding the random covering by rectangles and the rectangular shrinking targets problem are stated in  Section \ref{Secmain} and the three last sections are dedicated to the proof of these theorems.

\section{Preliminaries and notations}

\label{SecReca}

Let us start with some notations 

 Let $d$ $\in\mathbb{N}$. For $x\in\mathbb{R}^{d}$, $r>0$,  $B(x,r)$ stands for the closed ball of ($\mathbb{R}^{d}$,$\parallel \ \ \parallel_{\infty}$) of center $x$ and radius $r$. 
 Given a ball $B$, $\vert B\vert$ stands for the diameter of $B$. For $t\geq 0$, $\delta\in\mathbb{R}$ and $B=B(x,r)$,   $t B$ stands for $B(x,t r)$, i.e. the ball with same center as $B$ and radius multiplied by $t$,   and the  $\delta$-contracted  ball $B^{\delta}$ is  defined by $B^{\delta}=B(x ,r^{\delta})$. 
\smallskip

Given a set $E\subset \mathbb{R}^d$, $\widering{E}$ stands for the  interior of the set $E$, $\overline{E}$ its  closure and $\partial E =\overline{E}\setminus \widering{E}$ its boundary. If $E$ is a Borel subset of $\R^d$, its Borel $\sigma$-algebra is denoted by $\mathcal B(E)$.
\smallskip

Given a topological space $X$, the Borel $\sigma$-algebra of $X$ is denoted $\mathcal{B}(X)$ and the space of probability measure on $\mathcal{B}(X)$ is denoted $\mathcal{M}(X).$ 

\sk

Given a metric space $X$ and $r>0.$ A $r$-packing of $X$ will consists of a set of open balls $\mathcal{T}$ such that for every $B\in\mathcal{T},$ $\vert B \vert=r$ and for every $L\neq B \in \mathcal{T},$ $L\cap B =\emptyset.$

\sk

 The $d$-dimensional Lebesgue measure on $(\mathbb R^d,\mathcal{B}(\mathbb{R}^d))$ is denoted by 
$\mathcal{L}^d$.
\smallskip

For $\mu \in\mathcal{M}(\R^d)$,   $\supp(\mu)=\left\{x\in \mathbb{R}^d: \ \forall r>0, \ \mu(B(x,r))>0\right\}$ is the topological support of $\mu$.
\smallskip

Given $X,Y $ two spaces endowed with $\sigma$-algebras and $\mu\in\mathcal{M}(X)$ a measure, for any measurable $f:X \to Y$, one will denote $f\mu \in\mathcal{M}(Y)$ the  measure $\mu \circ f^{-1}(\cdot).$ 

\smallskip
 Given $E\subset \mathbb{R}^d$, $\dim_{H}(E)$ and $\dim_{P}(E)$ denote respectively  the Hausdorff   and the packing dimension of $E$.
\smallskip

Given a set $S$, $\chi_S$ denotes the indicator function of $S$, i.e., $\chi_S(x)=1$ if $x\in S$ and $\chi_S(x)=0$ otherwise.

 \smallskip

 Given $b,n\in\mathbb{N},$ $\mathcal{D}_{b,n}$ or simply $\mathcal{D}_{n}$ when there is no ambiguity on $b$, denotes the set of $b$-adic cubes of generation $n$ and $\mathcal{D}$ the set of all $b$-adic cubes, i.e.
 \begin{equation*}
\mathcal{D}_{n}=\left\{b^{-n}(k_1,...,k_d)+b^{-n}[0,1)^d, \ (k_1,...,k_d)\in\mathbb{Z}^d\right\}\text{ and }\mathcal{D}=\bigcup_{n\geq 0}\mathcal{D}_{n}.
\end{equation*}
In addition, given $x\in\mathbb{R}^d,$ $D_{b,n}(x)$ or $D_n(x)$ will denote the $b$-adic cube of generation $n$ containing $x$.

\subsection{Recall on geometric measure theory}

\begin{definition}
\label{hausgau}
Let $\zeta :\mathbb{R}^{+}\mapsto\mathbb{R}^+$. Suppose that $\zeta$ is increasing in a neighborhood of $0$ and $\zeta (0)=0$. The  Hausdorff outer measure at scale $t\in(0,+\infty]$ associated with the gauge $\zeta$ of a set $E$ is defined by 
\begin{equation}
\label{gaug}
\mathcal{H}^{\zeta}_t (E)=\inf \left\{\sum_{n\in\mathbb{N}}\zeta (\vert B_n\vert) : \, \vert B_n \vert \leq t, \ B_n \text{ closed ball and } E\subset \bigcup_{n\in \mathbb{N}}B_n\right\}.
\end{equation}
The Hausdorff measure associated with $\zeta$ of a set $E$ is defined by 
\begin{equation}
\mathcal{H}^{\zeta} (E)=\lim_{t\to 0^+}\mathcal{H}^{\zeta}_t (E).
\end{equation}
\end{definition}

For $t\in (0,+\infty]$, $s\geq 0$ and $\zeta:x\mapsto x^s$, one simply uses the usual notation $\mathcal{H}^{\zeta}_t (E)=\mathcal{H}^{s}_t (E)$ and $\mathcal{H}^{\zeta} (E)=\mathcal{H}^{s} (E)$, and these measures are called $s$-dimensional Hausdorff outer measure at scale $t\in(0,+\infty]$ and  $s$-dimensional Hausdorff measure respectively. Thus, 
\begin{equation}
\label{hcont}
\mathcal{H}^{s}_{t}(E)=\inf \left\{\sum_{n\in\mathbb{N}}\vert B_n\vert^s : \, \vert B_n \vert \leq t, \ B_n \text{  closed ball and } E\subset \bigcup_{n\in \mathbb{N}}B_n\right\}. 
\end{equation}
The quantity $\mathcal{H}^{s}_{\infty}(E)$ (obtained for $t=+\infty$) is called the $s$-dimensional Hausdorff content of the set $E$.
\begin{definition} 
\label{dim}
Let $\mu\in\mathcal{M}(\mathbb{R}^d)$.  
For $x\in \supp(\mu)$, the lower and upper  local dimensions of $\mu$ at $x$ are  defined as
\begin{align*}
\underline\dim_{{\rm loc}}(\mu,x)=\liminf_{r\rightarrow 0^{+}}\frac{\log(\mu(B(x,r)))}{\log(r)}
 \mbox{ and } \ \    \overline\dim_{{\rm loc}}(\mu,x)=\limsup_{r\rightarrow 0^{+}}\frac{\log (\mu(B(x,r)))}{\log(r)}.
 \end{align*}
Then, the lower and upper Hausdorff dimensions of $\mu$  are defined by 
\begin{equation}
\label{dimmu}
\dimm(\mu)={\mathrm{ess\,inf}}_{\mu}(\underline\dim_{{\rm loc}}(\mu,x))  \ \ \mbox{ and } \ \ \overline{\dim}_P (\mu)={\mathrm{ess\,sup}}_{\mu}(\overline\dim_{{\rm loc}}(\mu,x))
\end{equation}
respectively.
\end{definition}

It is known (for more details see \cite{F}) that
\begin{equation*}
\begin{split}
\dimm(\mu)&=\inf\{\dim_{H}(E):\, E\in\mathcal{B}(\mathbb{R}^d),\, \mu(E)>0\} \\
\overline{\dim}_P (\mu)&=\inf\{\dim_P(E):\, E\in\mathcal{B}(\mathbb{R}^d),\, \mu(E)=1\}.
\end{split}
\end{equation*}
When $\underline \dim_H(\mu)=\overline \dim_P(\mu)$, this common value is simply denoted by $\dim(\mu)$ and~$\mu$ is said to be \textit{exact-dimensional}. 

Moreover, a measure $\mu \in\mathcal{M}(\mathbb{R}^d)$ is called Alfhors-regular if there exists $0\leq \alpha \leq d$ and $C>0$ such that for every $x\in\supp(\mu),$ for every $0<r \leq 1,$ one has $$C^{-1}r^{\alpha}\leq\mu\Big(B(x,r)\Big)\leq Cr^{\alpha}.$$
It is direct to check that such a measure is $\alpha$-exact dimensional.

\subsection{Self-similar measures and multifractal analysis}

Let us start by recalling the definition of a self-similar measure.

\begin{definition}
\label{def-ssmu}  A self-similar IFS is a family $S=\left\{f_i\right\}_{1 \leq i\leq m}$ of $m\geq 2$ contracting similarities  of $\mathbb{R}^d$. 

Let $(p_i)_{i=1,...,m}\in (0,1)^m$ be a positive probability vector, i.e. $p_1+\cdots +p_m=1$.

The self-similar measure $\mu$ associated with $ \left\{f_i\right\}_{1\leq i\leq m}$  and $(p_i)_{1\leq i \leq m}$ is the unique probability measure such that 
\begin{equation}
\label{def-ssmu2}
\mu=\sum_{i=1}^m p_i \mu \circ f_i^{-1}.
\end{equation}

The topological support of $\mu$ is the attractor of $S$, that is the unique non-empty compact set $K\subset X$ such that  $K=\bigcup_{i=1}^m f_i(K)$.

\end{definition}

The existence  and uniqueness of $K$ and $\mu$ are standard results \cite{Hutchinson}. Recall that due to a result by Feng and Hu \cite{FH}, any self-similar measure is exact dimensional.

\subsection{Multifractal analysis of self-similar measure satisfying OSC}

Let us start by defining the multifractal spectrum of a measure.

\begin{definition}
\label{defmultifractal}
Let $\mu \in\mathcal{M}(\mathbb{R}^d)$ be a measure and $h\geq 0.$ Set $$E_h=\left\{x\in\supp(\mu) \ : \ \lim_{r\to 0^+}\frac{\log \mu(B(x,r))}{\log r}=h. \right\}.$$
 The multifractal spectrum of $\mu$ is the mapping $D_{\mu}$, defined for every $h\geq 0$ by $$D_{\mu}(h)=\dim_H E_h.$$
Moreover, we call $$\mbox{Spectr}(\mu)=\overline{\left\{\alpha : \ D_{\mu}(\alpha)>0\right\}}.$$ 
  
\end{definition}

Let us also recall that a self-similar IFS $S=\left\{f_1,...,f_m\right\}$ is said to satisfy the open set condition if there exists a non empty open set $O$ such that 
\begin{equation}
\label{equaOSC}
\forall 1\leq i\neq j \leq m, \ f_{i}(O)\cap f_j (O)=\emptyset. 
\end{equation}
Given a self-similar IFS $S$ satisfying the open set condition, we will also say, by extension, that a self-similar measure $\mu$ associated with $S$  satisfies the open set condition.

The multifractal spectrum of any such measure is well understood. Fix $S\left\{f_1,..,f_m\right\}$ a self-similar IFS and $(p_1,...,p_m)\in(0,1)^m$ a probability vector. For $1\leq i \leq m,$ let $0<c_i <1$ be the contraction ratio of $f_i$ and, given $q\in\mathbb{R},$ set $$p_{i,q}=c_i^{-T(q)} p_i ^q,$$
where $T(q)$ is such that $$\sum_{1 \leq i \leq m}p_{i,q}=1.$$
Call also $\mu_q$ the self-similar measure associated with $(p_{i,q})_{1\leq i\leq m}$ and 

\begin{equation}
\begin{cases}\theta_{q}=\frac{\sum_{1\leq i\leq m}p_{i,q}\log p_{i,q}}{\sum_{1\leq i\leq m}p_{i,q}\log c_i} \\ \kappa_{q}=\frac{\sum_{1\leq i\leq m}p_{i,q}\log p_{i}}{\sum_{1\leq i\leq m}p_{i,q}\log c_i}.\end{cases}
\end{equation}

 We recall some of the properties of this spectrum.

\begin{proposition}[\cite{Fa1}, pages 286-295]
\label{propriOSC}
Let $\mu \in\mathcal{M}(\mathbb{R}^d)$ be a self-similar measure satisfying the open set condition. Then:
\begin{itemize}
\item[•] for any $\alpha\geq 0,$ $ D_{\mu}(\alpha)\leq \alpha,$ \medskip
\item[•] the mapping $\alpha \mapsto D_{\mu}(\alpha)$  is concave and reaches its maximum on $\alpha\geq \dim_H \mu$  such that $D_{\mu}(\alpha)=\dim_H K.$\medskip
\item[•] $\mbox{Spectr}(\mu)$ is a compact interval. Moreover $\mbox{Spectr}(\mu)=\left\{s\right\}\Leftrightarrow$ $\mu$ is Alfhors-regular,
\medskip
\item[•] if $\mu$ is not Alfhors-regular, then $D_{\mu}$ is $\mathcal{C}^{\infty}$ on $\overset{\circ}{\mbox{Spectr}(\mu)}=(\alpha_{\min},\alpha_{\max}).$ Moreover $D'_{\mu}$ is non increasing on $(\alpha_{\min},\alpha_{\max})$ and 
$$\begin{cases}\lim_{\alpha \to \alpha_{\min}}D'_{\mu}(\alpha)=+\infty\\ \lim_{\alpha \to \alpha_{\max}}D'_{\mu}(\alpha)=-\infty.\end{cases}$$ 
In addition, for any $q\in\mathbb{R},$ $D_{\mu}(\kappa_q)=\theta_q$, $D'_{\mu}(\kappa_q)=q$ and $\mu_q(E_{\kappa_q})=1.$
\end{itemize}
\end{proposition}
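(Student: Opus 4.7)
The plan is to realize $D_\mu$ as the Legendre transform of the real-analytic pressure function $T$ and to read off all stated properties from this duality. First, by the implicit function theorem applied to the real-analytic map $(q,T)\mapsto \sum_{i=1}^m p_i^q c_i^{-T}$, which is strictly monotone in $T$, the function $T(q)$ is real-analytic on $\mathbb{R}$. Differentiating the defining relation yields
\begin{equation*}
T'(q)=\frac{\sum_i p_{i,q}\log p_i}{\sum_i p_{i,q}\log c_i}=\kappa_q,
\end{equation*}
and a direct Cauchy--Schwarz computation gives $T''(q)\leq 0$, with equality if and only if $\log p_i/\log c_i$ is constant in $i$, i.e.\ $p_i=c_i^{\dim_H K}$, which is precisely the Ahlfors-regular case. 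I would then introduce the auxiliary self-similar measure $\mu_q$ associated with $(p_{i,q})_i$: by the Feng--Hu theorem it is exact-dimensional with $\dim\mu_q=\theta_q$, and the strong law of large numbers applied to the i.i.d. symbolic coding under $\mu_q$ shows that for $\mu_q$-a.e.\ $x$, $\lim_{r\to 0}\log\mu(B(x,r))/\log r=\kappa_q$, so that $\mu_q(E_{\kappa_q})=1$.

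Next I would prove the identity $D_\mu(\kappa_q)=\theta_q$ on $(\alpha_{\min},\alpha_{\max})$ by a standard two-sided argument. For the upper bound, one covers $E_{\kappa_q}$ at scale $b^{-n}$ by the symbolic $n$-cylinders it meets (which, thanks to OSC, correspond to boundedly many $b$-adic cubes) and applies Chebyshev's inequality to the $q$-th powers of cylinder masses; the choice of $T(q)$ as a root of $\sum p_i^q c_i^{-T(q)}=1$ produces the bound $\dim_H E_{\kappa_q}\leq q\kappa_q-T(q)=\theta_q$. For the lower bound, the previous paragraph yields $\mu_q(E_{\kappa_q})=1$, so Billingsley's lemma combined with the exact-dimensionality of $\mu_q$ yields $\dim_H E_{\kappa_q}\geq\theta_q$.

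The remaining properties follow formally. The bound $D_\mu(\alpha)\leq\alpha$ is the mass-distribution principle applied to $\mu$ restricted to $E_\alpha$. Concavity of $D_\mu$ is immediate since it is the Legendre transform of the concave $T$. The maximum of $D_\mu$ is attained at $q=0$, where $T(0)=-\dim_H K$ (from $\sum c_i^{\dim_H K}=1$) gives $\theta_0=\dim_H K$, and $\kappa_0\geq\kappa_1=\dim_H\mu$ since $\kappa=T'$ is non-increasing. In the non-Ahlfors-regular case $T$ is \emph{strictly} concave, so $q\mapsto\kappa_q$ is a real-analytic decreasing diffeomorphism from $\mathbb{R}$ onto $(\alpha_{\min},\alpha_{\max})$; the relations $D_\mu(\kappa_q)=\theta_q$ and $D'_\mu(\kappa_q)=q$ then give $D_\mu\in\mathcal{C}^\infty((\alpha_{\min},\alpha_{\max}))$, with $D'_\mu(\alpha)\to+\infty$ as $\alpha\to\alpha_{\min}$ and $D'_\mu(\alpha)\to-\infty$ as $\alpha\to\alpha_{\max}$, reflecting $\kappa_q\to\alpha_{\min}$ as $q\to+\infty$ and $\kappa_q\to\alpha_{\max}$ as $q\to-\infty$. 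In the Ahlfors-regular case $T$ is affine, $\kappa_q\equiv\dim_H K$, and $\mbox{Spectr}(\mu)$ collapses to the single point $\dim_H K$.

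The hard part will be the full-mass statement $\mu_q(E_{\kappa_q})=1$: this is where OSC enters decisively, since only under a separation condition can one identify the generation-$n$ symbolic cylinder at a typical $x$ with (boundedly many copies of) its $b$-adic cube $D_n(x)$, so that $\log\mu(D_n(x))/\log|D_n(x)|$ is asymptotic to the symbolic Birkhoff ratio $\bigl(\sum_{k=1}^n\log p_{i_k(x)}\bigr)/\bigl(\sum_{k=1}^n\log c_{i_k(x)}\bigr)$ computed by the law of large numbers. Once this geometric-to-symbolic translation is in place, the rest of the proposition is a formal consequence of the real-analyticity of $T$ and the Legendre correspondence.
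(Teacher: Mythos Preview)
The paper does not prove this proposition: it is stated with a reference to \cite{Fa1}, pages 286--295, and no argument is given in the paper itself. Your sketch is essentially the standard thermodynamic/Legendre-transform proof one finds in that reference (and in Olsen's work): analyticity of the pressure function $T$, the identification $T'(q)=\kappa_q$, construction of the Gibbs measures $\mu_q$, the law of large numbers to get $\mu_q(E_{\kappa_q})=1$, and the covering argument for the upper bound. So there is nothing to compare against in the paper beyond the citation, and your outline matches the classical route.

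Two minor remarks on your write-up. First, you slip into speaking of ``$b$-adic cubes $D_n(x)$'' in the last paragraph, but in the generality of the proposition the contraction ratios $c_i$ are arbitrary; the correct objects are the symbolic cylinders (or the images $f_{i_1}\circ\cdots\circ f_{i_n}(K)$), and OSC is what lets you compare them to metric balls up to bounded multiplicity. Second, the phrase ``Legendre transform of the concave $T$'' is fine in spirit but worth making precise: $D_\mu(\alpha)=\inf_q\{q\alpha-T(q)\}$ is an infimum of affine functions of $\alpha$, hence concave, without needing any convexity hypothesis on $T$; the strict concavity of $T$ is what you use later to get that $q\mapsto\kappa_q$ is a diffeomorphism onto the open spectrum.
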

In addition of these properties, when $\mu$ is a self-similar measure satisfying the open set condition, it is also known that the multifractal spectrum and the so-called coarse multifractal spectrum coincides. More precisely, we have the following large deviation estimates.
\begin{theoreme}
\label{Thmlargedev}
Let $\mu\in\mathcal{M}(\mathbb{R}^d)$ be a self-similar measure satisfying the open set condition. Write $\mbox{Spectr}(\mu)=[\alpha_{\min},\alpha_{\max}].$ Let $\alpha_{\min}\leq \alpha\leq  \alpha_{\max} $ be a real number and, given $r>0$ let us write $$\mathcal{P}_{\alpha}(r,\varepsilon)=\sup\#\left\{\mathcal{T} \right\},$$
where $\mathcal{T}$ is a maximal $r$-packing of $\supp(\mu)$ with, for every $B \in\mathcal{T}$, $$\vert B \vert^{\alpha+\varepsilon} \leq \mu(B)\leq \vert B \vert^{\alpha-\varepsilon}.$$ 
There exists $r_{\alpha}>0$ such that for every $r\leq r_{\alpha},$ one has $$r^{-D_{\mu}(\alpha)+\varepsilon}\leq \mathcal{P}_{\alpha}(r,\varepsilon)\leq r^{-D_{\mu}(\alpha)-\varepsilon} .$$
\end{theoreme}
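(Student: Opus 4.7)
The plan is to exploit the auxiliary self-similar measures $\mu_q$ introduced just before Proposition \ref{propriOSC}. For $\alpha\in(\alpha_{\min},\alpha_{\max})$, the properties listed in that proposition (concavity of $D_\mu$, monotonicity of $D'_\mu$, and its behaviour at the endpoints) guarantee a $q\in\mathbb{R}$ with $\kappa_q=\alpha$, and then $D_\mu(\alpha)=\theta_q=q\alpha-T(q)$. First I would record the basic identity, valid on every cylinder $C=f_{i_1}\circ\cdots\circ f_{i_n}(K)$,
$$\mu_q(C)=|C|^{-T(q)}\,\mu(C)^q\cdot(\text{bounded factor}),$$
which follows directly from $p_{i,q}=c_i^{-T(q)}p_i^q$ and the multiplicative structure of self-similar measures. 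Because OSC implies that a ball $B$ of diameter $r$ meets only a bounded number of cylinders $C$ with $|C|\asymp r$ (and contains at least one such cylinder), the identity transfers to balls:
$$\mu_q(B)\asymp r^{-T(q)}\,\mu(B)^q,$$
with constants depending only on the IFS.

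For the upper bound I would take $B\in\mathcal{T}$ with $r^{\alpha+\varepsilon}\leq\mu(B)\leq r^{\alpha-\varepsilon}$ and plug the appropriate side of the constraint into the displayed equivalence (the side chosen according to $\mathrm{sgn}(q)$). Either way this yields
$$\mu_q(B)\geq C^{-1}\,r^{-T(q)+q\alpha+|q|\varepsilon}=C^{-1}\,r^{D_\mu(\alpha)+|q|\varepsilon}.$$
Since the balls in $\mathcal{T}$ are pairwise disjoint and $\mu_q$ is a probability measure, summing gives $\#\mathcal{T}\leq C\,r^{-D_\mu(\alpha)-|q|\varepsilon}$, and after rescaling $\varepsilon$ by $|q|$ (harmless, $|q|$ depending only on $\alpha$) one obtains the desired upper estimate.

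For the lower bound I would use the last item of Proposition \ref{propriOSC}: $\mu_q(E_{\kappa_q})=1$ with $\kappa_q=\alpha$, and $\mu_q$ is exact-dimensional of dimension $\theta_q=D_\mu(\alpha)$. By Egorov there exists $r_0>0$ and a compact $A\subset E_\alpha$ with $\mu_q(A)\geq 1/2$ on which, uniformly in $x\in A$ and $r\leq r_0$,
$$r^{\alpha+\varepsilon}\leq\mu(B(x,r))\leq r^{\alpha-\varepsilon}\quad\text{and}\quad \mu_q(B(x,r))\leq r^{D_\mu(\alpha)-\varepsilon}.$$
A Vitali–Besicovitch covering extracts a disjoint $r$-packing $\mathcal{T}$ of $A$ such that $\mu_q\bigl(\bigcup_{B\in\mathcal{T}}5B\bigr)\geq \mu_q(A)\geq 1/2$. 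The balls of $\mathcal{T}$ are centred in $A$, so they satisfy the multifractal constraint; together with $\mu_q(5B)\leq C r^{D_\mu(\alpha)-\varepsilon}$ this gives $\#\mathcal{T}\geq C'\,r^{-D_\mu(\alpha)+\varepsilon}$, as required.

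The main obstacle is the ball-to-cylinder comparison, i.e.\ ensuring the constants in $\mu_q(B)\asymp r^{-T(q)}\mu(B)^q$ really are uniform in $B$; this is where OSC is essential (it bounds both the multiplicity of overlaps and the smallest cylinder contained in $B$ in terms of $c_{\min}$). A secondary point is that the argument as described only delivers the estimate for $\alpha$ in the interior of $\mbox{Spectr}(\mu)$, because for $\alpha\in\{\alpha_{\min},\alpha_{\max}\}$ the corresponding $q$ is formally infinite; the endpoints are handled by approximating from the interior using the continuity of $D_\mu$ and of $\alpha\mapsto\mathcal{P}_\alpha(r,\varepsilon)$ in $\alpha$ (after relaxing $\varepsilon$ slightly).
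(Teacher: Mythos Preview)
The paper does not actually prove Theorem~\ref{Thmlargedev}. It is stated in the preliminaries as a known fact (the sentence introducing it reads ``it is also known that the multifractal spectrum and the so-called coarse multifractal spectrum coincides''), on the same footing as Proposition~\ref{propriOSC}, which is cited from \cite{Fa1}. There is therefore no in-paper argument to compare against.

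Your outline is the standard multifractal-formalism proof and is essentially correct; it is precisely the argument one finds in the references the paper is implicitly invoking. Two remarks. First, the ``rescale $\varepsilon$ by $|q|$'' step is not quite as harmless as you claim: the same $\varepsilon$ appears in the constraint $r^{\alpha+\varepsilon}\le\mu(B)\le r^{\alpha-\varepsilon}$ defining $\mathcal{T}$ and in the target exponent $-D_\mu(\alpha)\pm\varepsilon$, so dividing $\varepsilon$ by $|q|$ at the end changes both. The honest formulation is that for every $\varepsilon'>0$ there exists $\varepsilon>0$ and $r_\alpha>0$ such that $\mathcal{P}_\alpha(r,\varepsilon)\le r^{-D_\mu(\alpha)-\varepsilon'}$; this is what your argument proves, and it is also what the paper actually uses downstream (Lemma~\ref{LemmaMultifracForma} is applied only after fixing $\varepsilon$ and then choosing scales small enough). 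Second, your treatment of the endpoints $\alpha\in\{\alpha_{\min},\alpha_{\max}\}$ via continuity is fine for the upper bound but needs a little more care for the lower bound, since $D_\mu$ may vanish there; in practice the lower estimate at the endpoints is obtained by exhibiting a single ball with the right scaling rather than a packing of growing cardinality.
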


\section{Main results}
\label{Secmain}
\subsection{Random covering of self-similar carpet by rectangles}
Let us start by defining base-$b$ missing digit IFS's.

\begin{definition}
\label{Defmiss}
Let $b \in\mathbb{N}$ be an integer and  $\mathcal{A} \subset  \left\{0,...,b-1\right\}^2.$ Given $(i,j)\in\mathcal{A},$ let $g_{(i,j)}$ be the canonical contraction from $[0,1]^2$ to $(\frac{i}{b},\frac{j}{b})+[0,\frac{1}{b}]^2.$ The IFS $$S_{\mathcal{A}}:=\left\{g_{(i,j)}\right\}_{(i,j)\in\mathcal{A}}$$
is called a base-$b$ missing digit IFS. Its attractor $K$ is called a two-dimensional base $b$-missing digit set.
\end{definition}
Note that it is direct to check that $S_{\mathcal{A}}$ satisfies the open set condition, with $O=(0,1)^2.$

Given $S_{\mathcal{A}}$ a base-$b$ missing digit IFS, we define for every $0\leq i\leq b-1,$ 
$$p_{i, \mathcal{A}} =\frac{\#\left\{0\leq j\leq b-1 : \ (j,i)\in\mathcal{A}\right\}}{\# \mathcal{A}} \in [0,1].$$
Let $\mu_0$ be the Alfhors-regular self-similar measure on $K$, i.e. the self-similar  measure solution to $$\mu_0(\cdot)=\sum_{(i,j)\in\mathcal{A}}\frac{1}{\#\mathcal{A}}g_{(i,j)}\mu_0 (\cdot).$$

It is easily seen that the orthogonal projection of $\mu_0$ on the $y$-axis,  $\pi_2 \mu_0,$ is a self-similar measure associated with the IFS $\left\{g_{0,i}\right\}_{0 \leq i \leq b-1}$ and the weights $(p_{0,i}=p_{i, \mathcal{A}})_{0\leq i\leq b-1}.$

Given $q\in\mathbb{R}$ and $0\leq i\leq b-1,$ we also set $$p_{i,\mathcal{A},q}=\frac{p_{i,\mathcal{A}} ^q}{\sum_{0\leq j\leq b-1}p_{j,\mathcal{A}}^q}\text{ and } \kappa_{q,\mathcal{A}}=\frac{-\sum_{0\leq i\leq b-1}p_{i,\mathcal{A},q}\log p_{i,\mathcal{A}}}{\log b} .$$

Regarding the random and dynamical covering by rectangles of $K$, our main result is the following.

\begin{theoreme}
\label{ThmRand}
Let $\frac{1}{s_0}\leq \tau_1 \leq \tau_2$ be two real numbers and $S$ a base-$b$ two dimensional missing digit IFS. Let  $(X_n)_{n\in\mathbb{N}}$ be either an i.i.d. sequence of law $\mu_0$ or an orbit $(b^{n}x)_{n\in\mathbb{N}}, $ where $x\in\mathbb{T}^2,$ and 

$$W_{\tau_1,\tau_2}=\limsup_{n \to+\infty}\Big(X_n+(-\frac{1}{n^{\tau_1}}, \frac{1}{n^{\tau_1}})\times  (-\frac{1}{n^{\tau_2}}, \frac{1}{n^{\tau_2}})\Big).$$

 Write $\tau =\frac{\tau_2}{\tau_1}$ and define $v_{\tau}: \mbox{Spectr}(\pi_2(\mu_0)) \to \mathbb{R}$ by $$v_{\tau}(\alpha)=s_0 +(\tau-2)\alpha -(\tau-1)D_{\pi_2 \mu_0}(\alpha).$$ 
It is easily verified that $v_{\tau}$ is non increasing on $(-\infty, \kappa_{\frac{\tau -2}{\tau-1}}].$ Define, when possible (in particular when $\tau \neq 1$), $\beta_{\tau_1,\tau_2} $ as the unique solution on $[-\infty, \kappa_{\frac{\tau -2}{\tau-1}}]$ to  $v_{\tau}(\alpha)=\frac{1}{\tau_1}. $

Then, almost surely (or for $\mu_0$-almost every $x \in\mathbb{T}^2$):

\begin{equation}
\dim_H W_{\tau_1,\tau_2}=\begin{cases} \frac{1}{\tau_1} \ \ \ \ \ \ \ \ \ \ \ \ \ \ \ \  \ \ \ \ \ \ \ \ \ \ \ \ \ \ \ \ \ \ \ \ \ \ \ \text{ if } \ \frac{1}{\tau_1}\leq \dim_H \mu_0 -\dim_H \pi_2 \mu_0, \\ 
\frac{1}{\tau_1}-(\tau-1)(\beta_{\tau_1,\tau_2}-D_{\pi_2 \mu_0}(\beta_{\tau_1,\tau_2})) \text{ if } \ \dim_H \mu_0 -\dim_H \pi_2 \mu_0\leq \frac{1}{\tau_1}\leq v_{\tau}(\kappa_2), \\ \frac{1+(\tau_2 -\tau_1)(s_0 -2\kappa_2+D_{\pi_2 \mu_0}(\kappa_2)))}{\tau_2} \ \ \ \ \ \ \ \ \ \ \ \text{ if } \ \frac{1}{\tau_1}\geq v_{\tau}(\kappa_2),  \end{cases}
\end{equation}
\end{theoreme}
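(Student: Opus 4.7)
The plan is to prove matching upper and lower bounds for $\dim_H W_{\tau_1,\tau_2}$ by exploiting the multifractal structure of $\pi_2\mu_0$ to choose, for each rectangle $R_n = X_n + (-n^{-\tau_1}, n^{-\tau_1}) \times (-n^{-\tau_2}, n^{-\tau_2})$, an intermediate $b$-adic covering scale $b^{-k_n}$ with $n^{-\tau_2} \leq b^{-k_n} \leq n^{-\tau_1}$. Since $\tau_1 \leq \tau_2$ the rectangles are wider than tall, and the number of $b^{-k_n}$-cubes needed to cover $R_n \cap K$ depends both on $k_n$ and on the local multifractal behaviour of $\pi_2\mu_0$ at the projected centre $\pi_2(X_n)$; the interplay between these two parameters drives the three-regime structure.

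For the upper bound, fix $t$ strictly larger than the claimed dimension and stratify the indices $n$ according to the local exponent $\alpha \in \mbox{Spectr}(\pi_2\mu_0)$ that best describes $\pi_2\mu_0$ at $\pi_2(X_n)$ at scale $n^{-\tau_2}$. Theorem \ref{Thmlargedev} controls both the $\mu_0$-probability that $X_n$ falls in a given $\alpha$-stratum and the number of $b$-adic cubes of side $b^{-k_n}$ needed to cover $R_n \cap K$ on that event. Computing $\mathbb{E}[\mathcal{H}^t_\infty(R_n \cap K)]$ stratum by stratum and optimising over $(\alpha, k_n)$, the resulting series is summable in $n$ as soon as $t$ exceeds the right-hand side of the theorem, and Borel--Cantelli yields the upper bound almost surely. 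The position of the optimising $\alpha^\ast$ selects the regime: $\alpha^\ast = \dim_H \pi_2\mu_0$ in Regime~1 (dimension $1/\tau_1$, essentially the ball-like case), $\alpha^\ast = \beta_{\tau_1,\tau_2}$ in the interior Regime~2, and $\alpha^\ast = \kappa_2$ at the admissibility boundary in Regime~3.

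For the lower bound, I build a Cantor-like subset of $W_{\tau_1,\tau_2}$ using only those $R_n$ whose centres $X_n$ have projected local exponent close to $\alpha^\ast$. By the lower half of Theorem \ref{Thmlargedev} together with a second-moment (or ergodic) argument, these ``good'' rectangles are plentiful enough to cover any typical point of $K$ at every scale. Distributing mass on the resulting Cantor set and verifying a Frostman-type estimate gives the matching lower bound for $\dim_H W_{\tau_1,\tau_2}$. In the deterministic case $X_n = b^n x$, Borel--Cantelli is replaced by Birkhoff's theorem applied to the $\times b$ action on $\mathbb{T}^2$ evaluated on indicator functions of multifractal cells, so that for $\mu_0$-typical $x$ the orbit visits each $\alpha$-stratum with the correct asymptotic frequency.

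The main obstacle will be Regime~2. There $\beta_{\tau_1,\tau_2}$ is defined only implicitly by $v_\tau(\beta) = 1/\tau_1$, so the Cantor construction must be finely calibrated: at each generation the cardinality and geometric layout of the descendants have to simultaneously respect the multifractal profile of $\pi_2\mu_0$ around $\beta_{\tau_1,\tau_2}$ and the two anisotropic scales $n^{-\tau_1}, n^{-\tau_2}$. The Hölder exponent of the resulting mass distribution equals $\frac{1}{\tau_1} - (\tau-1)(\beta_{\tau_1,\tau_2} - D_{\pi_2\mu_0}(\beta_{\tau_1,\tau_2}))$ precisely because of the defining relation $v_\tau(\beta_{\tau_1,\tau_2}) = 1/\tau_1$, and it is this bookkeeping between large-deviation counts on the projection side and slice counts on the carpet side that constitutes the technical heart of the argument.
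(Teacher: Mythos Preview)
Your upper-bound strategy is essentially the paper's: stratify the indices $n$ according to the coarse multifractal level $\alpha$ of $\pi_2\mu_0$ near $\pi_2(X_n)$, use Theorem~\ref{Thmlargedev} to count the cubes needed to cover $R_n\cap K$ and to bound the $\mu_0$-probability of each stratum, then sum and apply Markov plus Borel--Cantelli. One refinement the paper makes that you do not mention: it treats separately the event that $X_n$ falls close to the boundary of its $b$-adic cube (where the local exponent of $\pi_2\mu_0$ is pinned to $-\log p_0/\log b$ or $-\log p_{b-1}/\log b$), which is needed because the stratification by local exponent at scale $n^{-\tau_2}$ is not well-behaved across cube boundaries.

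For the lower bound your route diverges from the paper's. You propose to build a Cantor subset of $W_{\tau_1,\tau_2}$ by hand, place a Frostman measure on it, and read off the dimension. The paper avoids this entirely: it first proves a content estimate (Proposition~\ref{MainPropoCont} and Proposition~\ref{Propoascontent}) giving, for every $0\le s\le s_0$, sharp two-sided bounds on $\mathcal H^s_\infty(R\cap K)$ for rectangles $R$ of the relevant shape, in terms of the $\pi_2\mu_0$-mass of nested $b$-adic intervals around the projected centre. With this in hand, the lower bound follows from the mass transference principle for self-similar measures (Theorem~\ref{MTPss}): one selects the auxiliary Gibbs measure $\nu_\alpha$ from Proposition~\ref{propriOSC} at the optimal $\alpha$, uses independence and Fubini to show that $\mu_0$-a.e.\ point lies in infinitely many balls $B(X_p,p^{-\tau_1/((1+\varepsilon)\delta_\alpha)})$ whose associated rectangles satisfy $\mathcal H^{s}_\infty(R_p\cap K)\ge\mu_0(B(\cdots))$, and then invokes the MTP directly. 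No Cantor construction or mass distribution is ever carried out.

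Your Cantor approach is not wrong in principle, but the paper's route is both shorter and structurally cleaner: the content estimate isolates exactly the geometric input (how many cubes of an intermediate scale are needed, as a function of $s$ and the local exponent), and the MTP converts a full-measure covering statement into a dimension lower bound in one step. If you pursue the Cantor construction you will in effect be reproving a special case of Theorem~\ref{MTPss}, and the ``fine calibration'' you anticipate in Regime~2 is precisely what the content estimate of Proposition~\ref{MainPropoCont} packages once and for all.
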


\begin{remark}
\begin{itemize}
\item[(1)] For simplicity, the results where formulate in the case  $\tau_1 \leq \tau_2$, but the results straightforwardly adapts to the case $\tau_2 \leq \tau_1$ by switching the roles of $\tau_1$ and $\tau_2$ and considering $\pi_1 \mu_0$ rather than $\pi_2 \mu_0.$ \medskip
\item[(2)] $\frac{1}{s_0}\leq \tau_1 =\tau_2,$ one recovers that $\dim_H W_{\tau_1,\tau_1}=\frac{1}{\tau_1}.$\medskip
\item[(3)] When $S$ has uniform fibers, meaning that $p_{i,\mathcal{A}}=p_{j,\mathcal{A}}$ for every $0 \leq i,j\leq b-1,$ then $\pi_2 \mu_0$ is Alfhors-regular, which implies in particular that $ \mbox{Spectr}(\pi_2 \mu_0)=\left\{\dim_H \pi_2 \mu_0\right\}$ and $\dim_H \pi_2 \mu_0 =D_{\pi_2 \mu_0}(\dim_H \pi_2 \mu_0).$ Thus in this case, one obtains for every $\frac{1}{s_0}\leq \tau_1 \leq \tau_2,$ $$\dim_H W_{\tau_1,\tau_2}=\min\left\{\frac{1}{\tau_1},\frac{1+(\tau_2-\tau_1)(s_0 -\dim_H \pi_2 \mu_0)}{\tau_2}\right\},$$
which is consistent with the sub-case where $\mathcal{A}=\mathcal{A}_1 \times \mathcal{A}_2,$ where $\mathcal{A}_1, \mathcal{A}_2 \subset \left\{0,...,b-1\right\}.$
\end{itemize}

\end{remark}

The next section presents our result regarding the rectangular shrinking target problem.

\subsection{Tree approximation}

In this section, we study the $``$ tree   approximation$"$, which, as mentioned in introduction, can be seen as a reformulation of the classical shrinking target problem associated with the the mapping $T_b :\mathbb{T}^2 \to \mathbb{T}^2,$ defined by $T_b(x)=bx.$

Consider again $S=\left\{f_1,..,f_m\right\}$, a two dimensional  base-$b$ missing digit IFS of attractor $K$  and $\mu_0$ the Alfhors-regular self-similar measure on $K$. The following proposition is necessary to state our main result and  will be established in the next section, as Proposition \ref{PropLocDimCenter} applied with $\pi_2 \nu$ (which can be identified with a $\times b$ ergodic measure on $\mathbb{T}^1$).

\begin{proposition}
\label{PropoASLocdim}
Let $\nu$ be a $\times$-b (i.e. with respect to $T_b$) ergodic measure. Then, there exists $\alpha_{\nu}$ such that, for $\nu$-almost every $x$, one has $$\lim_{r\to 0^+}\frac{\log \pi_2 \mu_0 B\Big(\pi_2(x),r\Big)}{\log r}=\alpha_{\nu}.$$ 
\end{proposition}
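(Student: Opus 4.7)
My strategy is to reduce to a one-dimensional ergodic problem and then apply Birkhoff's theorem to a natural digit potential. Since $\pi_2 \circ T_b = T_b \circ \pi_2$, the pushforward $\nu' := \pi_2 \nu$ is a $T_b$-invariant measure on $\mathbb{T}^1$, and ergodicity of $\nu$ on $\mathbb{T}^2$ transfers to $\nu'$ on $\mathbb{T}^1$. It therefore suffices to show that for $\nu'$-a.e. $y \in \mathbb{T}^1$, the limit $\lim_{r\to 0^+}\frac{\log \pi_2\mu_0(B(y,r))}{\log r}$ exists and equals a deterministic constant $\alpha_\nu$.

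Next, I would exploit the fact that $\pi_2\mu_0$ is the Bernoulli measure with weights $(p_{i,\mathcal{A}})_{0\le i \le b-1}$: self-similarity yields $\pi_2\mu_0(D_n(y)) = \prod_{k=1}^n p_{d_k(y),\mathcal{A}}$, where $d_k(y)$ is the $k$-th base-$b$ digit of $y$ and $D_n(y)$ the generation-$n$ $b$-adic interval containing $y$. Setting $\phi(y) = -\log p_{d_1(y),\mathcal{A}}$, bounded and defined $\nu'$-a.e. (degenerate situations where $\nu'$ charges digits $i$ with $p_{i,\mathcal{A}}=0$ would force $\nu'$ off $\supp(\pi_2\mu_0)$ and can be dismissed), the formula rewrites as the Birkhoff sum $-\log \pi_2\mu_0(D_n(y)) = \sum_{k=0}^{n-1}\phi(T_b^k y)$. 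Birkhoff's theorem applied to $(T_b,\nu',\phi)$ then gives, for $\nu'$-a.e. $y$,
\begin{equation*}
\lim_{n\to\infty}\frac{\log \pi_2\mu_0(D_n(y))}{\log b^{-n}} \;=\; \frac{1}{\log b}\int \phi\, d\nu' \;=:\; \alpha_\nu .
\end{equation*}

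The last step is to promote convergence along the $b$-adic sequence $b^{-n}$ to a continuous limit as $r \to 0^+$. For $b^{-n-1} \le r < b^{-n}$, the ball $B(y,r)$ contains a generation-$(n+2)$ $b$-adic interval and is covered by the three consecutive intervals $D_n(y) \cup D_n^+(y) \cup D_n^-(y)$. The neighbors $D_n^\pm(y)$ agree with $D_n(y)$ on their first $n-L_n(y)$ digits, where $L_n(y)$ is the length of the maximal tail run of $0$'s (resp.\ $(b-1)$'s) in $(d_1(y),\dots,d_n(y))$, so
\begin{equation*}
\bigl|\log \pi_2\mu_0(D_n^\pm(y)) - \log \pi_2\mu_0(D_n(y))\bigr| \;\le\; 2 L_n(y)\,\bigl\|\log p_{\cdot,\mathcal{A}}\bigr\|_\infty .
\end{equation*}
Thus the problem reduces to $L_n(y) = o(n)$ for $\nu'$-a.e. $y$. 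For any $\epsilon>0$ and $\delta \in \{0,b-1\}$, $T_b$-invariance of $\nu'$ bounds $\nu'(L_n \ge \epsilon n \text{ ending in digit }\delta)$ by $\nu'(\{z : d_1(z)=\cdots=d_{\lceil \epsilon n \rceil}(z)=\delta\})$, and ergodicity of $\nu'$ forces this either to be summable (so Borel--Cantelli closes the argument) or to be a positive constant, in which case $\nu'$ is the Dirac mass at the associated fixed point of $T_b$ and the local dimension can be checked by hand.

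The main obstacle is this last step: one must control the carry effects that distinguish $b$-adic cell masses from Euclidean ball masses uniformly in scale, and separately dispose of the handful of degenerate ergodic measures supported on fixed points of $T_b$.
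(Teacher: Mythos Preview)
Your overall strategy matches the paper's: push forward to $\nu'=\pi_2\nu$ on $\mathbb T^1$, apply Birkhoff to the digit potential to get the cylinder limit, then upgrade cylinders to balls by controlling long tail runs of $0$'s or $(b-1)$'s. The gap is in that last step. The dichotomy you invoke --- that ergodicity forces $n\mapsto\nu'([\delta^{\lceil\epsilon n\rceil}])$ to be either summable or eventually a positive constant --- is not true for general ergodic $\nu'$. A telescoping computation using invariance shows that $\sum_{m\ge1}\nu'([\delta^m])$ is comparable to $\int_A R_A^2\,d\nu'$ for $A=\{d_1\neq\delta\}$, and Kac's lemma only guarantees $\int_A R_A\,d\nu'=1$; the second moment of the return time can be infinite for ergodic non-atomic shift-invariant measures. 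So Borel--Cantelli is simply not available here, and your argument for $L_n=o(n)$ does not close.

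The paper handles this with a direct Birkhoff frequency contradiction (its Lemma~\ref{PropoApproxErgodb}): if $\nu'$ is not a Dirac mass, pick a digit $i\neq\delta$ with $q_i:=\nu'(\{d_1=i\})>0$. For a Birkhoff-generic $y$ with $L_{n_k}(y)\ge\epsilon n_k$ along a subsequence, the count of $i$'s among the first $n_k$ digits equals the count among the first $(1-\epsilon)n_k$ digits, so dividing by $n_k$ and passing to the limit gives $q_i=(1-\epsilon)q_i$, a contradiction. This yields $L_n=o(n)$ $\nu'$-a.s.\ without any summability, after which your neighbour-interval estimate goes through. The paper packages this geometrically --- for each $\tau>1$, $\nu'$-a.e.\ $y$ eventually satisfies $B(y,b^{-n\tau})\subset D_n(y)\subset B(y,b^{-n})$, whence $\limsup\le\alpha_\nu\le\tau\liminf$ and one lets $\tau\downarrow1$ --- but the substance is exactly the $L_n=o(n)$ statement, proved via Birkhoff rather than Borel--Cantelli.
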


In the next theorem, given a word $\underline{i}=(i_1,...,i_n)\in\left\{1,...,m\right\}^n,$ one writes $$f_{\underline{i}}=f_{i_1}\circ ... \circ f_{i_n}. $$  
\begin{theoreme}
\label{ThmTree}
Let $\mu$ be a self-similar measure (with respect to $S$) and $\Lambda \subset \bigcup_{k\geq 1}\left\{1,...,m\right\}^k$ be a set of words such that $$\mu\Big(\limsup_{\underline{i}\in\Lambda}f_{\underline{i}}([0,1]^2)\Big)=1.$$
Then, for every $1\leq \tau_1 \leq \tau_2,$ for any $\times b$ ergodic measure $\nu$ with $\supp(\nu)\subset K$, writing 
$$V_{\tau_1,\tau_2}(x)=\limsup_{\underline{i}\in\Lambda}\Big( f_{\underline{i}}(x)+(-\vert f_{\underline{i}}(K)\vert^{\tau_1},\vert f_{\underline{i}}(K)\vert^{\tau_1})\times (-\vert f_{\underline{i}}(K)\vert^{\tau_2},\vert f_{\underline{i}}(K)\vert^{\tau_2})\Big),$$
  for $\nu$-almost every $x$, one has

$$\dim_H V_{\tau_1,\tau_2}(x)\geq \min\left\{\frac{\dim_H \mu}{\tau_1}, \frac{\dim_H \mu +(\tau_2 -\tau_1)(s_0 -\alpha_{\nu})}{\tau_2}\right\}.$$

Assume in addition that $$\lim_{\vert\underline{i}\vert \to +\infty}\frac{ -\log \mu(f_{\underline{i}}([0,1]^2))}{\vert \underline{i}\vert \log b}=\dim_H \mu,$$
then , for $\nu$-almost every $x$,
\begin{align*}
\dim_H V_{\tau_1,\tau_2}(x)= \min\left\{\frac{\dim_H \mu}{\tau_1}, \frac{\dim_H \mu +(\tau_2 -\tau_1)(s_0 -\alpha_{\nu})}{\tau_2}\right\}.
\end{align*}

\end{theoreme}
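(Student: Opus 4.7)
The plan is to match an upper bound with a lower bound; both hinge on a single ``cylinder-counting'' estimate extracted from the pointwise ergodic theorem. \textbf{Counting estimate:} for $\nu$-almost every $x$ and every sufficiently long $\underline{i}\in\Lambda$, the number of level-$(|\underline{i}|\tau_2)$ cylinders of $K$ meeting $R_{\underline{i}}$ is $\approx b^{|\underline{i}|(\tau_2-\tau_1)(s_0-\alpha_\nu)}$. I would prove this by applying $f_{\underline{i}}^{-1}$, which reduces the count to that of level-$n(\tau_2-1)$ cylinders inside a rectangle of sides $2b^{-n(\tau_1-1)}\times 2b^{-n(\tau_2-1)}$ centred at $x$, with $n=|\underline{i}|$. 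Such cylinders have $y$-digits forced to match those of $\pi_2(x)$, while the $x$-digits at positions $n(\tau_1-1)+1,\ldots,n(\tau_2-1)$ admit $N_{b_j}:=\#\{a:(a,b_j)\in\mathcal{A}\}$ choices each. Since $\pi_2\nu$ is $T_b$-ergodic, Birkhoff's theorem yields $\frac{1}{n(\tau_2-\tau_1)}\sum_j \log N_{b_j}\to (s_0-\alpha_\nu)\log b$ (using Proposition~\ref{PropoASLocdim} to identify the integral), which gives the asymptotic.

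For the \textbf{upper bound}, I would first observe that $V_{\tau_1,\tau_2}(x)\subseteq K$: any $y\in V_{\tau_1,\tau_2}(x)$ lies in $R_{\underline{j}}$ for arbitrarily large $|\underline{j}|$, hence within $b^{-|\underline{j}|\tau_1}$ of $f_{\underline{j}}(x)\in K$, so $y\in\overline{K}=K$. Thus $V_{\tau_1,\tau_2}(x)\cap R_{\underline{i}}$ is covered by the level-$(|\underline{i}|\tau_2)$ cylinders of $K$ meeting $R_{\underline{i}}$. The extra hypothesis yields $|\Lambda_n|\lesssim b^{n\dim_H\mu}$, and the Hausdorff-content sum
\[
\sum_n |\Lambda_n|\, b^{n(\tau_2-\tau_1)(s_0-\alpha_\nu)}\, b^{-n\tau_2 s}
\]
converges for $s>(\dim_H\mu+(\tau_2-\tau_1)(s_0-\alpha_\nu))/\tau_2$. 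The competing bound $\dim_H\mu/\tau_1$ comes from the trivial cover of each $R_{\underline{i}}$ by a single ball of radius $b^{-|\underline{i}|\tau_1}$.

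For the \textbf{lower bound}, I would build a nested Cantor subset of $V_{\tau_1,\tau_2}(x)$ along a rapidly growing tower $n_1<n_2<\cdots$. Having selected $R_{\underline{i}_k}$, its children are obtained by first picking one of the $\approx b^{n_k(\tau_2-\tau_1)(s_0-\alpha_\nu)}$ level-$(n_k\tau_2)$ cylinders $C$ of $K$ lying inside $R_{\underline{i}_k}$ and then picking a $\mu$-typical $\underline{i}_{k+1}\in\Lambda_{n_{k+1}}$ with $f_{\underline{i}_{k+1}}([0,1]^2)\subseteq C$; there are $\approx b^{(n_{k+1}-n_k\tau_2)\dim_H\mu}$ such words. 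Uniform mass distribution yields a Cantor measure $\mu_c$; checking Frostman at $r\approx b^{-n_k\tau_1}$ (where a ball contains one parent rectangle of mass $\approx b^{-n_k\dim_H\mu}$) produces the branch $\dim_H\mu/\tau_1$, while checking at $r\approx b^{-n_k\tau_2}$ (where a ball captures only one ``column'' of mass $\approx b^{-n_k(\dim_H\mu+(\tau_2-\tau_1)(s_0-\alpha_\nu))}$) produces the second branch; scales in between produce strictly weaker constraints.

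The principal obstacle will be quantitative: both asymptotic counts must hold with uniform error over all $\underline{i}$ of length $\sim n_k$ simultaneously, forcing $n_{k+1}/n_k$ to be chosen so large that Birkhoff-type errors do not accumulate along the tower. A secondary technicality is ensuring that $R_{\underline{i}_{k+1}}$ truly sits inside $R_{\underline{i}_k}$, which holds for $\nu$-typical $x$ since such $x$ lies in the interior of $[0,1]^2$ and the child rectangles are much smaller than the containing cylinder.
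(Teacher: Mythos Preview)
Your counting estimate and your upper bound are correct and essentially coincide with the paper's argument. The paper packages your cylinder count as a two-sided Hausdorff-content estimate
\[
\mathcal{H}^s_\infty(R_{\underline{i}}\cap K)\asymp b^{-|\underline{i}|\max\{s\tau_1,\,s\tau_2-(\tau_2-\tau_1)(s_0-\alpha_\nu)\}}
\]
(Proposition~\ref{Propoascontent}), obtained by the same pull-back and Birkhoff reasoning you describe, and then bounds the sum $\sum_{\underline{i}}\mathcal{H}^{t_\varepsilon}_\infty(R_{\underline{i}}\cap K)$ by $\sum_n b^{-n\varepsilon}\sum_{\underline{i}\in\{1,\ldots,m\}^n}\mu(f_{\underline{i}}([0,1]^2))=\sum_n b^{-n\varepsilon}$, which is your covering sum rewritten.

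The lower bound is where your proposal departs from the paper and where there is a genuine gap. You assert that inside each level-$(n_k\tau_2)$ cylinder $C$ one finds $\approx b^{(n_{k+1}-n_k\tau_2)\dim_H\mu}$ words $\underline{i}_{k+1}\in\Lambda$ of length $n_{k+1}$ with $f_{\underline{i}_{k+1}}([0,1]^2)\subseteq C$. But the only hypothesis on $\Lambda$ is $\mu\big(\limsup_{\underline{i}\in\Lambda}f_{\underline{i}}([0,1]^2)\big)=1$; this gives no control on how many words of any \emph{fixed} length lie in $\Lambda$, nor on how they distribute among cylinders --- $\Lambda$ could contain words only of lengths $2^k$, or could sit in very few subtrees at each generation. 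Without such uniform control the Cantor branching cannot be set up as you describe, and the ``uniform error'' issue you flag as the principal obstacle is in fact secondary to this structural one.

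The paper avoids counting $\Lambda$-words altogether by invoking the mass transference principle for self-similar measures (Theorem~\ref{MTPss}). It first extracts, via \cite{ed2}, a subfamily $\widetilde\Lambda\subset\Lambda$ still of full $\mu$-limsup on which $\mu\big(B(f_{\underline{i}}(x),2|f_{\underline{i}}(K)|)\big)\le b^{-|\underline{i}|(\dim_H\mu-\varepsilon)}$; the content estimate then gives $\mathcal{H}^{s_\varepsilon}_\infty(R_{\underline{i}}\cap K)\ge \mu(B_{\underline{i}})$ for the appropriate $s_\varepsilon$, and MTP yields $\dim_H\limsup R_{\underline{i}}\ge s_\varepsilon$ directly. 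If you want to rescue the Cantor route, you need an explicit uniformisation step producing, inside each $C$, a well-separated family of $\Lambda$-cylinders carrying a fixed fraction of $\mu(C)$ --- which amounts to reproving MTP in this setting.
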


Given $\nu$ a $\times b$-ergodic measure $\nu$ with $\supp(\nu)\subset K$, $\psi,\theta:\mathbb{N} \to \mathbb{R}_+,$ define $$\begin{cases}\mathcal{N}_1 =\left\{n : \ \psi(n) \geq \theta(n)\right\} \\  \mathcal{N}_2 =\left\{n : \ \psi(n) < \theta(n)\right\}\end{cases} $$
and $\beta_{\nu}$ the real number (which exists by Proposition \ref{PropLocDimCenter}) such that, for $\nu$-almost every $x$, one has

$$\lim_{r\to 0^+}\frac{\log \pi_1 \mu_0 B\Big(\pi_1(x),r\Big)}{\log r}=\beta_{\nu}.$$

Define also

\begin{align*}
&\limsup_{n\in\mathcal{N}_1} \min\left\{\frac{s_0}{\frac{\log \psi(n)}{-n \log b}}, \frac{s_0 +\Big(\frac{\log \theta(n)}{-n \log b} -\frac{\log \psi(n)}{-n \log b}\Big)(s_0 -\alpha_{\nu})}{\frac{\log \theta(n)}{-n \log b}}\right\}\\
&:=g_1(\psi,\theta,\nu)\\
&\limsup_{n\in\mathcal{N}_2} \min\left\{\frac{s_0}{\frac{\log \theta(n)}{-n \log b}}, \frac{s_0 +\Big(\frac{\log \psi(n)}{-n \log b} -\frac{\log \theta(n)}{-n \log b}\Big)(s_0 -\beta_{\nu})}{\frac{\log \psi(n)}{-n \log b}}\right\}\\
&:=g_2(\psi,\theta,\nu)
\end{align*}

and, for $i=1,2,$ consider two non increasing sequences of integers $ (n_{k,i})_{k\in\mathbb{N}}\subset \mathcal{N}_i ^{\mathbb{N}}$ such that
\begin{align*}
&\lim_{k\to +\infty }\min\left\{\frac{s_0}{\frac{\log \psi(n_{k,1}) }{-n_{k,1} \log b}}, \frac{s_0 +\Big(\frac{\log \theta(n_{k,1} )}{-n_{k,1}  \log b} -\frac{\log \psi(n_{k,1} )}{-n_{k,1}  \log b}\Big)(s_0 -\alpha_{\nu})}{\frac{\log \theta(n_{k,1} )}{-n_{k,1}  \log b}}\right\}\\
&:=g_1(\psi,\theta,\nu)\\
&\text{ and }\\
& \lim_{k\to +\infty }\min\left\{\frac{s_0}{\frac{\log \theta(n_{k,2}) }{-n_{k,2} \log b}}, \frac{s_0 +\Big(\frac{\log \psi(n_{k,2} )}{-n_{k,2}  \log b} -\frac{\log \theta(n_{k,2} )}{-n_{k,2}  \log b}\Big)(s_0 -\beta_{\nu})}{\frac{\log \psi(n_{k,2} )}{-n_{k,2}  \log b}}\right\}\\
&:=g_2(\psi,\theta,\nu).
\end{align*}
By applying \ref{ThmTree} successively to $\Lambda_1=\bigcup_{k\geq 1}\left\{1,...,m\right\}^{n_{k,1}}$ and $\Lambda_1=\bigcup_{k\geq 1}\left\{1,...,m\right\}^{n_{k,2}}$ and $\mu_0$ , one obtains the following corollary.  
\begin{corollary}
\label{Coropsiteta}
Let $\psi,\theta:\mathbb{N}\to \mathbb{R}_+$ be two mappings such that $$\min\left\{\liminf_{n\to +\infty}\frac{\log \psi(n)}{-n\log b},\liminf_{n\to +\infty}\frac{\log \theta(n)}{-n\log b}\right\}\geq 1.$$ Then 

\begin{align*}
&\dim_H \limsup_{\underline{i}\in\bigcup_{k\geq 1}\left\{1,...,m\right\}^k}\Big( f_{\underline{i}}(x)+(-\psi(\vert \underline{i}\vert),\psi(\vert \underline{i}\vert)\times (-\theta(\vert \underline{i}\vert),\theta(\vert \underline{i}\vert))\Big)\\
&=\max\left\{g_1(\psi,\theta,\nu),g_2(\psi,\theta,\nu)\right\}.
\end{align*}

\end{corollary}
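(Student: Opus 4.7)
The overall strategy is to split the index set along the partition $\mathcal{N}_1 \cup \mathcal{N}_2$ and apply Theorem~\ref{ThmTree} to each piece. Set $\Lambda_i = \bigcup_{n \in \mathcal{N}_i}\{1,\dots,m\}^n$ for $i=1,2$, and let $W_i$ denote the limsup set formed using only words from $\Lambda_i$. A point belongs to the full limsup $W$ if and only if it lies in infinitely many of the rectangles, and any infinite subset of $\mathbb{N}$ meets $\mathcal{N}_1$ or $\mathcal{N}_2$ infinitely often, so $W = W_1 \cup W_2$ and $\dim_H W = \max\{\dim_H W_1, \dim_H W_2\}$. It therefore suffices to show that $\dim_H W_i = g_i(\psi,\theta,\nu)$ for $i=1,2$.

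To apply Theorem~\ref{ThmTree}, which requires constant exponents $\tau_1,\tau_2$ producing rectangle sides of the form $|f_{\underline{i}}(K)|^{\tau_j}$, the variable exponents $\tau_1^{(n)} := \frac{\log \psi(n)}{-n\log b}$ and $\tau_2^{(n)} := \frac{\log \theta(n)}{-n\log b}$ must be matched to fixed limits. Along the subsequence $(n_{k,1})$ supplied by the statement, after a further extraction the two exponents converge to limits $\tau_1^*,\tau_2^*$. For any $\varepsilon>0$ and $k$ large, the rectangle at generation $n_{k,1}$ is sandwiched between two rectangles of the form used in Theorem~\ref{ThmTree}, with exponents $\tau_j^* \pm \varepsilon$. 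Since the cylinders of any fixed generation cover $K$, one has $\mu_0(\limsup_{\underline{i} \in \Lambda_1} f_{\underline{i}}([0,1]^2))=1$, and by Ahlfors regularity $-\log\mu_0(f_{\underline{i}}([0,1]^2))/(|\underline{i}|\log b) \to s_0 = \dim_H \mu_0$, so the ``in addition'' hypothesis of Theorem~\ref{ThmTree} holds. Applying the theorem to each sandwiching rectangle, letting $\varepsilon\to 0$, and invoking continuity of the min-formula in $(\tau_1,\tau_2)$ yields $\dim_H W_1 \geq g_1(\psi,\theta,\nu)$ by the very definition of $(n_{k,1})$. Swapping the roles of the two axes and using $\pi_1\mu_0$ (whose almost-sure local dimension is $\beta_\nu$) gives $\dim_H W_2 \geq g_2(\psi,\theta,\nu)$ in the same way.

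The matching upper bounds are obtained by an analogous sandwich argument applied to the full sets $\Lambda_i$: the rectangles at any generation $n \in \mathcal{N}_i$ are contained in fixed-exponent rectangles whose parameters, after sub-sequential extraction, realize the $\limsup$ defining $g_i$. The upper-bound half of Theorem~\ref{ThmTree}, applied to each such family, combined with the countable stability of Hausdorff dimension and the monotonicity property that bigger rectangles yield a bigger $\limsup$, gives $\dim_H W_i \leq g_i(\psi,\theta,\nu)$. Combining with the lower bound proved in the previous paragraph concludes.

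The principal obstacle is the sandwich step: justifying that the variable exponents may be replaced by fixed limits without changing the Hausdorff dimension. When the limits $\tau_j^*$ are finite this is straightforward, but one must also handle the case $\tau_j^* = +\infty$, which is allowed by the mild hypothesis $\liminf \geq 1$ (for instance $\psi(n)=b^{-n^2}$). In this degenerate regime the corresponding side of the rectangle shrinks super-polynomially and the min-formula degenerates; a direct covering estimate shows that the contribution to the dimension is zero in the axis which vanishes, so the formula remains correct at the boundary, matching the convention implicit in the definitions of $g_1$ and $g_2$.
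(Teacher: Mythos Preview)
Your approach is essentially the same as the paper's: the paper's entire proof is the single sentence preceding the corollary, namely ``By applying Theorem~\ref{ThmTree} successively to $\Lambda_1=\bigcup_{k\geq 1}\{1,\dots,m\}^{n_{k,1}}$ and $\Lambda_2=\bigcup_{k\geq 1}\{1,\dots,m\}^{n_{k,2}}$ and $\mu_0$, one obtains the following corollary.'' Your write-up simply unpacks this, making explicit the split $W=W_1\cup W_2$ along $\mathcal{N}_1,\mathcal{N}_2$, the sandwich $\tau_j^*\pm\varepsilon$ needed because Theorem~\ref{ThmTree} takes fixed exponents, the verification that $\mu_0$ satisfies both hypotheses of that theorem, and the degenerate case $\tau_j^*=+\infty$---all of which the paper leaves implicit.
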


\begin{remark}
\label{RemarkRectanShr}
\item[•] Interestingly, unlike in the case of balls (i.e. for $\tau_1=\tau_2$), Theorem \ref{ThmTree} shows that $\dim_H V_{\tau_1,\tau_2}(x)$ depends in general on the choice of $x\in K.$ In particular, under these settings, the set of possible values (which are all attained) is $$\left\{\min\left\{\frac{\dim_H \mu}{\tau_1}, \frac{\dim_H \mu +(\tau_2 -\tau_1)(s_0 -\alpha)}{\tau_2}\right\} \  \alpha\in \mbox{Spectr}(\pi_2 \mu_0)\right\}.$$

\item[•] A careful reader will notice that  the proof of Theorem \ref{ThmTree} only requires $\pi_2 \nu$ to be ergodic rather than $\nu.$ Thus the assumption of Theorem \ref{ThmTree} can be weakened accordingly.\medskip 
\item[•] As mentioned in the introduction, Corollary \ref{Coropsiteta} was also obtained by Allen, Jordan and Ward in \cite{AJW}, using a different method, under the  weaker assumption that the coding of $x=(x_n :=(x_n^{1},x_n^{2}))_{n\in\mathbb{N}}\in \mathcal{A}^{\mathbb{N}}$ satisfies that, writing $S=S_{\mathcal{A}},$ for each $0\leq j\leq b-1$ for which there exists $0 \leq i \leq b-1$ so that $(i,j)\in\mathcal{A},$ one has $$\lim_{n \to +\infty}\frac{\#\left\{0\leq k\leq n : x_n ^2 =j \ \right\}}{n}=\kappa_j,$$
for some $0\leq \kappa_j < 1 .$
\end{remark}

As a second application, we study the anisotropic approximation under digit frequency constraints. Such problems where for instance studied in \cite{BS2,ED1} To this end, we first recall a corollary of \cite[Proposition 2.2]{BS2}

\begin{proposition}[\cite{BS2}]
Let $(p_1,...,p_m)$ be a probability vector and
\begin{align*}
&\Lambda_{(p_1,...,p_m)}\\
&=\left\{\underline{i}=(i_1,...,i_n) : \ \forall 1 \leq k\leq m, \Big\vert \frac{1}{n}\#\left\{1\leq j\leq m : i_j =k\right\}-p_k \Big\vert \leq \sqrt{\frac{2\log \log n}{n}}\right\}.
\end{align*}
 
Then, if $\mu$ is the self-similar measure associated with $(p_1,..,p_m),$ one has $$\mu\Big(\limsup_{\underline{i}\in\Lambda_{(p_1,...,p_m)}}f_{\underline{i}}\Big([0,1]^2\Big)\Big)=1.$$
\end{proposition}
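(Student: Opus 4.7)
The natural approach is to pass to the symbolic side via the canonical coding and apply the Hartman--Wintner law of the iterated logarithm.

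First I would set $\pi:\{1,\dots,m\}^{\mathbb{N}}\to K$ to be the coding map $\pi((i_n)_{n\geq 1})=\lim_{n\to\infty} f_{i_1}\circ\cdots\circ f_{i_n}(0)$, and let $\mathbb{P}$ denote the Bernoulli product measure on $\{1,\dots,m\}^{\mathbb{N}}$ with marginal $(p_1,\dots,p_m)$. A standard fact is that the self-similar measure $\mu$ associated with $(p_1,\dots,p_m)$ satisfies $\mu=\pi_*\mathbb{P}$. Under $\mathbb{P}$, for each fixed digit $k\in\{1,\dots,m\}$ the random variables $Y_j^{(k)}(\underline{i})=\mathbbm{1}_{\{i_j=k\}}$ form an i.i.d. Bernoulli sequence of mean $p_k$ and variance $p_k(1-p_k)$. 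Denote the partial sums by $S_n^{(k)}=Y_1^{(k)}+\cdots+Y_n^{(k)}$, so that $S_n^{(k)}/n$ is exactly the empirical frequency of the digit $k$ in the prefix $(i_1,\dots,i_n)$.

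Next I would apply the law of the iterated logarithm to each coordinate. Hartman--Wintner gives, $\mathbb{P}$-almost surely,
\begin{equation*}
\limsup_{n\to\infty}\frac{|S_n^{(k)}-np_k|}{\sqrt{2n\log\log n}}=\sqrt{p_k(1-p_k)}\leq \tfrac{1}{2}.
\end{equation*}
Equivalently, almost surely there exists $N_k(\underline{i})\in\mathbb{N}$ such that for every $n\geq N_k$,
\begin{equation*}
\Big|\frac{S_n^{(k)}}{n}-p_k\Big|\leq \tfrac{3}{4}\sqrt{\frac{2\log\log n}{n}}\leq \sqrt{\frac{2\log\log n}{n}}.
\end{equation*}
Taking the intersection over the finite set of digits $k=1,\dots,m$, there exists a $\mathbb{P}$-full measure set of sequences $\underline{i}$ and an integer $N(\underline{i})$ such that for every $n\geq N(\underline{i})$ and every $k$ the above estimate holds; by definition of $\Lambda_{(p_1,\dots,p_m)}$ this means $(i_1,\dots,i_n)\in\Lambda_{(p_1,\dots,p_m)}$ for all $n\geq N(\underline{i})$.

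Finally I would transfer the statement to $K$ via $\pi$. For such a typical $\underline{i}$, the point $x=\pi(\underline{i})$ belongs to $f_{i_1}\circ\cdots\circ f_{i_n}([0,1]^2)$ for every $n$, hence in particular for all $n\geq N(\underline{i})$, so $x$ lies in infinitely many cylinders $f_{\underline{j}}([0,1]^2)$ with $\underline{j}\in\Lambda_{(p_1,\dots,p_m)}$. Pushing forward under $\pi$ yields $\mu\bigl(\limsup_{\underline{j}\in\Lambda_{(p_1,\dots,p_m)}} f_{\underline{j}}([0,1]^2)\bigr)=1$. There is no real obstacle here: the only thing to check carefully is that the LIL constant $\sqrt{p_k(1-p_k)}\leq 1/2$ is strictly less than the constant $1$ appearing in the definition of $\Lambda_{(p_1,\dots,p_m)}$, which is exactly why the deviation $\sqrt{2\log\log n/n}$ in the definition of $\Lambda$ is the right threshold to capture $\mu$-typical codings.
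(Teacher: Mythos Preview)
The paper does not actually prove this proposition: it is stated as a recall of (a corollary of) \cite[Proposition 2.2]{BS2}, with no argument given in the present article. Your proof via the Hartman--Wintner law of the iterated logarithm is correct and is the standard route to this kind of statement; the only observation needed is that the LIL constant $\sqrt{p_k(1-p_k)}\leq 1/2<1$ for each digit, so that $\mathbb{P}$-almost every coding has all its long enough prefixes in $\Lambda_{(p_1,\dots,p_m)}$, and pushing forward by the coding map gives full $\mu$-measure to the $\limsup$ set. This is presumably also the argument underlying the result in \cite{BS2}, so there is nothing to contrast here beyond noting that the present paper simply quotes the result.
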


Our result related to approximation under digit frequencies is the following.

\begin{corollary}
Let $(p_1,...,p_m)$ be a probability vector and
\begin{align*}
&\Lambda_{(p_1,...,p_m)}\\
&=\left\{\underline{i}=(i_1,...,i_n) : \ \forall 1 \leq k\leq m, \Big\vert \frac{1}{n}\#\left\{1\leq j\leq m : i_j =k\right\}-p_k \Big\vert \leq \sqrt{\frac{2\log \log n}{n}}\right\}.
\end{align*}
Then 
\begin{align*}
&\dim_H \limsup_{\underline{i}\in\Lambda_{(p_1,...,p_m)}}\Big( f_{\underline{i}}(x)+(-\vert f_{\underline{i}}(K)\vert^{\tau_1},\vert f_{\underline{i}}(K)\vert^{\tau_1})\times (-\vert f_{\underline{i}}(K)\vert^{\tau_2},\vert f_{\underline{i}}(K)\vert^{\tau_2})\Big)\\
&= \min\left\{\frac{\frac{-\sum_{1\leq i\leq m}p_i \log p_i}{\log b}}{\tau_1}, \frac{\frac{-\sum_{1\leq i\leq m}p_i \log p_i}{\log b} +(\tau_2 -\tau_1)(s_0 -\alpha_{\nu})}{\tau_2}\right\}.
\end{align*}
\end{corollary}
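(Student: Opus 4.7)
The plan is to derive this corollary as an essentially direct application of Theorem \ref{ThmTree} with $\mu$ chosen to be the Bernoulli self-similar measure on $K$ associated with the weights $(p_1,\ldots,p_m)$. Since the maps $f_i$ are all homotheties of ratio $1/b$ and $S$ satisfies the open set condition, the classical formula for self-similar measures gives $\dim_H\mu=\frac{-\sum_{i=1}^m p_i\log p_i}{\log b}$, which matches exactly the common numerator appearing in both arguments of the minimum on the right-hand side of the statement.

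The first step is to invoke the proposition just above the corollary (taken from \cite{BS2}), which asserts $\mu\bigl(\limsup_{\underline{i}\in\Lambda_{(p_1,\ldots,p_m)}} f_{\underline{i}}([0,1]^2)\bigr)=1$. This is exactly the full-measure hypothesis required to apply Theorem \ref{ThmTree} with $\Lambda=\Lambda_{(p_1,\ldots,p_m)}$ and the chosen $\mu$; it therefore delivers the lower bound
$$\dim_H V_{\tau_1,\tau_2}(x)\geq\min\Bigl\{\tfrac{\dim_H\mu}{\tau_1},\tfrac{\dim_H\mu+(\tau_2-\tau_1)(s_0-\alpha_\nu)}{\tau_2}\Bigr\}$$
for $\nu$-almost every $x$, which is the correct right-hand side.

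To upgrade to equality I would verify the second hypothesis of Theorem \ref{ThmTree}, namely
$$\lim_{|\underline{i}|\to+\infty,\ \underline{i}\in\Lambda_{(p_1,\ldots,p_m)}}\frac{-\log\mu(f_{\underline{i}}([0,1]^2))}{|\underline{i}|\log b}=\dim_H\mu.$$
As $\mu$ is a Bernoulli product, for $\underline{i}=(i_1,\ldots,i_n)$ one has $\mu(f_{\underline{i}}([0,1]^2))=\prod_{j=1}^{n}p_{i_j}$, so the ratio above equals $\frac{1}{n\log b}\sum_{k=1}^{m} N_k(\underline{i})(-\log p_k)$, where $N_k(\underline{i})$ counts occurrences of the symbol $k$ in $\underline{i}$. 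The defining inequality of $\Lambda_{(p_1,\ldots,p_m)}$ forces $|N_k(\underline{i})/n-p_k|\leq\sqrt{2\log\log n/n}$ uniformly in $k$, so the average converges to $\frac{-\sum_{k} p_k\log p_k}{\log b}=\dim_H\mu$, as required.

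Combining the two steps, Theorem \ref{ThmTree} delivers the claimed dimension formula. There is essentially no hard step: the argument is purely a verification that the two ingredients of Theorem \ref{ThmTree} are available for the particular pair $(\mu,\Lambda)$. The only subtlety worth flagging is that the iterated-logarithm width $\sqrt{2\log\log n/n}$ in the definition of $\Lambda_{(p_1,\ldots,p_m)}$ must be narrow enough to force the exact-dimensional behaviour of $\mu$ along the cylinders indexed by $\Lambda$; this is exactly what is needed to pass from the lower bound to the full equality in Theorem \ref{ThmTree}.
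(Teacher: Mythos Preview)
Your proposal is correct and matches the paper's intended approach: the corollary is presented in the paper as an immediate consequence of Theorem \ref{ThmTree} together with the quoted proposition from \cite{BS2}, and your two verification steps (full measure of the limsup of cylinders, and the convergence of $\frac{-\log\mu(f_{\underline{i}}([0,1]^2))}{|\underline{i}|\log b}$ to $\dim_H\mu$ along $\Lambda_{(p_1,\ldots,p_m)}$) are exactly what is needed. The paper gives no separate proof beyond juxtaposing the proposition and the theorem, so your explicit check of the second hypothesis via the digit-frequency constraint is a welcome addition rather than a deviation.
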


The next three sections are dedicated to the proofs of Theorem \ref{ThmRand} and Theorem \ref{ThmTree}.  The Section \ref{SecP} establishes some useful general results regarding $\times b$ ergodic measures, provides some recalls on the mass transference principle for self-similar measures and establishes important content estimates, useful to the proof of both theorems. In the last section, we finally prove Theorem \ref{ThmRand} and Theorem \ref{ThmTree}.

%
%
%
%
%
%

\section{Some preliminaries to the proof of Theorems \ref{ThmRand} and Theorem \ref{ThmTree}}
\label{SecP}
\subsection{Results regarding $\times b$ ergodic measures}
\label{SectionPreli}

Let us fix $b\in\mathbb{N}$ and $\eta $  a $\times b$ ergodic measure on $\mathbb{T}$. For $0\leq i\leq b-1,$ let us denote $f_i$ the mapping defined for every $x\in\mathbb{T}$ by $f_i(x)=\frac{x+i}{b}$ and $S_b= \left\{f_i\right\}_{0\leq i\leq b-1}.$ The goal of this section is to prove the following result.

\begin{proposition}
\label{PropLocDimCenter}
Let $\mu $ be a self-similar measure associated with the IFS $S_b$ and the probability vector $(p_0,...,p_{b-1})$. Then for $\eta$-almost every $x$, one has $$\lim_{n\to +\infty}\frac{\log \mu\Big( D_{b,n}(x)\Big)}{-n\log b}=\lim_{r\to 0^+}\frac{\log \mu\Big(B(x,r)\Big)}{\log r}=\frac{-\sum_{0\leq i\leq b-1}\eta([\frac{i}{b},\frac{i+1}{b}])\log p_i}{\log b}.$$
\end{proposition}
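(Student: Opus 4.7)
The strategy is to reduce the local dimension of $\mu$ at $\eta$-generic points to the Birkhoff ergodic theorem for the action of $T_b$ on $(\mathbb{T},\eta)$. For $x \in \mathbb{T}$, write $(d_k(x))_{k \geq 1}$ for its base-$b$ digit expansion, so that $D_{b,n}(x) = f_{d_1(x)} \circ \cdots \circ f_{d_n(x)}([0,1))$. Iterating the self-similarity relation $\mu = \sum_i p_i\, \mu \circ f_i^{-1}$ gives $\mu(D_{b,n}(x)) = \prod_{k=1}^n p_{d_k(x)}$. The function $\phi(x) := -\log p_{d_1(x)}$ is bounded and $\phi(T_b^{k-1}x) = -\log p_{d_k(x)}$, so the Birkhoff theorem applied to $(\mathbb{T},T_b,\eta)$ yields, for $\eta$-a.e.\ $x$,
$$\frac{\log \mu(D_{b,n}(x))}{-n\log b} \;=\; \frac{1}{n\log b}\sum_{k=0}^{n-1}\phi(T_b^k x) \;\longrightarrow\; \frac{1}{\log b}\int \phi\, d\eta \;=\; \frac{-\sum_{i=0}^{b-1}\eta([i/b,(i+1)/b])\log p_i}{\log b} \;=:\; h,$$
which is the first equality of the proposition.

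To pass from $b$-adic cubes to balls, fix $r>0$ and choose $n=n(r)$ with $b^{-(n+1)} \leq r < b^{-n}$. Since $x \in D_{b,n+1}(x)$ and this cube has diameter $b^{-(n+1)} \leq r$, we get $D_{b,n+1}(x) \subset B(x,r)$. Conversely $B(x,r)$ has diameter $2r < 2b^{-n}$, hence it intersects at most three consecutive generation-$n$ cubes: $D_{b,n}(x)$ and its immediate $b$-adic neighbors $L_n(x)$ and $R_n(x)$. This gives the sandwich
$$\mu(D_{b,n+1}(x)) \;\leq\; \mu(B(x,r)) \;\leq\; \mu(D_{b,n}(x)) + \mu(L_n(x)) + \mu(R_n(x)).$$

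The only non-routine step is controlling the two neighbors. Let $j=j(n)$ be the number of trailing zeros of $(d_1(x),\ldots,d_n(x))$; then the digits of $L_n(x)$ are obtained from those of $D_{b,n}(x)$ by decrementing the $(n-j)$-th digit and replacing the $j$ trailing zeros by $b-1$, so that
$$\log \mu(L_n(x)) - \log \mu(D_{b,n}(x)) \;=\; \log \frac{p_{d_{n-j}(x)-1}}{p_{d_{n-j}(x)}} \;+\; j\bigl(\log p_{b-1} - \log p_0\bigr).$$
Excluding the degenerate case $\eta=\delta_0$, which is verified by direct inspection, Birkhoff applied to $\chi_{[0,1/b)}$ gives that the asymptotic frequency of non-zero digits of $x$ equals $1-\eta([0,1/b)) > 0$; a standard density argument over windows $[(1-\varepsilon)n,n]$ then forces $j(n)=o(n)$ for $\eta$-a.e.\ $x$, so $\log \mu(L_n(x))/(-n\log b) \to h$. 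The symmetric argument with trailing $(b-1)$'s handles $R_n(x)$ (the case $\eta=\delta_1$ being again trivial). Combined with the sandwich above and the first step, this yields $\log \mu(B(x,r))/\log r \to h$ as $r\to 0^+$. The main obstacle is precisely this third step: turning an almost sure digit statement into quantitative control of the $b$-adic neighbors uniformly in $n$.
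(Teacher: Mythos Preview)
Your argument is correct, and the first step (Birkhoff for the $b$-adic local dimension) coincides with the paper's. The second step, however, follows a genuinely different route from the paper. The paper does \emph{not} try to control the $\mu$-mass of the neighboring intervals $L_n(x)$ and $R_n(x)$. Instead it proves an auxiliary Diophantine result (Lemma~\ref{PropoApproxErgodb}): for any diffuse $\times b$-ergodic $\eta$ and any $\tau>1$, $\eta$-a.e.\ $x$ satisfies $|x-k/b^n|\ge b^{-n\tau}$ for all large $n$ and all $k$. This yields the inclusion $B(x,b^{-n\tau})\subset D_{b,n}(x)\subset B(x,b^{-n})$, from which one gets directly
\[
\limsup_{r\to 0}\frac{\log\mu(B(x,r))}{\log r}\ \le\ h\ \le\ \tau\,\liminf_{r\to 0}\frac{\log\mu(B(x,r))}{\log r},
\]
and letting $\tau\downarrow 1$ concludes. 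The atomic (periodic-orbit) case is dismissed separately in both arguments.

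The trade-offs: your approach is more elementary and self-contained for this specific $\mu$, exploiting the explicit product structure $\mu(D_{b,n}(x))=\prod_k p_{d_k(x)}$ to compare $\mu(L_n(x))$ and $\mu(D_{b,n}(x))$ digit by digit. The paper's approach is geometry-first and measure-agnostic: once the inclusion $B(x,b^{-n\tau})\subset D_{b,n}(x)$ is known, it works for \emph{any} measure whose $b$-adic scaling is understood, not only Bernoulli ones, and it yields as a byproduct the useful Corollary~\ref{Corodya} (comparison of balls and $b$-adic cubes) which is reused later in the paper. Conversely, your density argument showing that the length $j(n)$ of the trailing run of $0$'s (or $(b-1)$'s) is $o(n)$ is essentially the same Birkhoff computation the paper carries out inside the proof of Lemma~\ref{PropoApproxErgodb}, just applied to a slightly different window; so the two proofs share the same analytic core but package it differently.
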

It is worth mentioning that equality between the left-handside and the right-handside can be obtained as a consequence of Birkhoff's ergodic theorem. The difficulty here comes from the case where one considers a centered ball, as $\mu$ is not assume to be doubling. The proof of Proposition \ref{PropLocDimCenter} relies on the following lemma.

\begin{lemme}
\label{PropoApproxErgodb}
Fix $\tau>1 $ and write $$A_{b,\tau}= \left\{x: \ \vert x-\frac{k}{b^n}\vert \leq \frac{1}{b^{n\tau}}\text{ i.o. }\right\}.$$

If $\eta$ has no atom then $$\eta\Big(A_{b,\tau}\Big)=0.$$
\end{lemme}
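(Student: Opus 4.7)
My plan is to reduce the lemma to a Borel--Cantelli estimate using the $\times b$-invariance of $\eta$. Observing that, on the torus, $|x-kb^{-n}|\leq b^{-n\tau}$ for some integer $k$ is equivalent to $T_b^n x\in B_{\mathbb{T}}(0,b^{-n(\tau-1)})$ (where $T_b(y)=by\bmod 1$), I would rewrite
\[
A_{b,\tau}=\limsup_{n\to\infty} T_b^{-n}\bigl(B_{\mathbb{T}}(0,b^{-n(\tau-1)})\bigr).
\]
Then $T_b$-invariance of $\eta$ gives $\eta\bigl(T_b^{-n}(B_{\mathbb{T}}(0,r))\bigr)=\eta(B_{\mathbb{T}}(0,r))$, so the convergent half of Borel--Cantelli reduces the task to showing
\[
\sum_{n\geq 1}\eta\bigl(B_{\mathbb{T}}(0,b^{-n(\tau-1)})\bigr)<+\infty. \tag{$\ast$}
\]

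To establish $(\ast)$: bare non-atomicity of $\eta$ at $0$ only gives $\eta(B_{\mathbb{T}}(0,r))\to 0$ as $r\to 0$, with no quantitative rate. I would extract a rate from the $\times b$-invariance itself. Iterating $\eta(B(0,r))=\sum_{i=0}^{b-1}\eta(B(i/b,r/b))$ and setting $\mu_m=\eta(B(0,b^{-m}))$ yields the telescoping identity $\mu_m=\sum_{k\geq m}R_k$, with $R_k=\sum_{i=1}^{b-1}\eta(B(i/b,b^{-k-1}))$, hence $\sum_m\mu_m=\sum_k(k+1)R_k$. Combining this with exact dimensionality of an ergodic $\times b$-invariant measure (via Shannon--McMillan--Breiman) and transporting the generic local dimension at a typical point to the fixed point $0$ through invariance should produce a polynomial bound $\mu_m\leq Cb^{-\alpha m}$ for some $\alpha>0$ in the positive-entropy regime, which gives $(\ast)$ for every $\tau>1$.

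A complementary angle uses a $0$--$1$ law: for $\tau>1$, the set $A_{b,\tau}$ is essentially $T_b$-invariant, because $T_b^n x\in B(0,r_n)\subset B(0,r_{n-1})$ (since $r_n<r_{n-1}$) translates, upon reindexing, into $T_b x\in A_{b,\tau}$; by $\eta$-invariance $A_{b,\tau}$ and $T_b^{-1}(A_{b,\tau})$ have equal measure and thus coincide modulo $\eta$-null, and ergodicity forces $\eta(A_{b,\tau})\in\{0,1\}$. Ruling out the value $1$ then reduces to exhibiting a set of positive $\eta$-measure whose orbit eventually escapes every $B(0,r_n)$.

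The main obstacle will be the zero-entropy regime ($h_\eta=0$), where the polynomial bound above degenerates and direct Borel--Cantelli needs extra input. Here I would invoke structural features of zero-entropy $\times b$-invariant non-atomic measures: their supports are subshifts in which arbitrarily long constant-digit runs of $0$ or $b-1$ are typically forbidden (as in Sturmian-like examples), so $\mu_m=0$ for large $m$ and $(\ast)$ holds trivially. Cleanly isolating and handling this regime is the delicate step of the argument.
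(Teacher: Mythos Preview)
Your reduction $A_{b,\tau}=\limsup_n T_b^{-n}\bigl(B_{\mathbb T}(0,b^{-n(\tau-1)})\bigr)$ and the $0$--$1$ law are both correct, but the Borel--Cantelli route via $(\ast)$ cannot be completed: the summability $\sum_m \eta\bigl(B_{\mathbb T}(0,b^{-m})\bigr)<\infty$ fails for some non-atomic $\times b$-ergodic measures, including positive-entropy ones. A concrete obstruction: for $b=2$, let $\eta$ be the stationary renewal measure whose $0$-runs between consecutive $1$'s are i.i.d.\ with law $P(L=k)=c\,k^{-5/2}$. This $\eta$ is ergodic, non-atomic, and has positive entropy (Abramov gives $h=H(L)/E[L{+}1]>0$ since $E[L]<\infty$), yet a tower computation yields $\eta([0^m])\asymp m^{-1/2}$, so $\sum_m\eta([0^m])=\infty$. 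Hence the bound $\mu_m\le Cb^{-\alpha m}$ you hope to extract from Shannon--McMillan--Breiman is simply false: SMB controls cylinders around \emph{$\eta$-typical} points, and there is no way to ``transport'' that control to the fixed point $0$. Your zero-entropy claim (that long constant-digit runs are forbidden in the support) is likewise not a general fact about such measures. In terms of your own telescoping, $\sum_m\mu_m=\int\phi\,d\eta$ equals $\tfrac12\int_A R_A(R_A-1)\,d\eta$ for $A=[1/b,1)$ by the Kac tower, and second moments of return times need not be finite.

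The paper bypasses $(\ast)$ entirely and argues by contradiction through digit frequencies. Assume $\eta(A_{b,\tau})>0$ and take $x\in A_{b,\tau}$ generic for the Birkhoff averages $S_{N,i}(x)\to p_i:=\eta([\tfrac{i}{b},\tfrac{i+1}{b}))$. Membership in $A_{b,\tau}$ forces, along a subsequence $n_k\to\infty$, a run of at least $(\tau-1)n_k$ consecutive digits equal to $j\in\{0,b-1\}$ starting at position $n_k$ in the base-$b$ expansion of $x$. Comparing $S_{n_k,i}$ with $S_{\tau n_k,i}$ for any $i\neq j$ with $p_i>0$ gives $S_{\tau n_k,i}\le \tfrac{1}{\tau}(p_i+\varepsilon)<p_i-\varepsilon$, contradicting convergence. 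Hence $p_j=1$; a second pass using genericity for cylinder sets then forces $\eta=\delta_0$, contradicting non-atomicity. The missing idea in your approach is precisely this \emph{frequency-incompatibility} contradiction, which replaces any quantitative decay of $\eta$ near $0$ by a soft ergodic argument.
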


Before proving Lemma \ref{PropoApproxErgodb}, we show how it implies Proposition \ref{PropLocDimCenter}. Prior to that we start by a small classical lemma.

\begin{lemme}
\label{lemmadiffu}
Let $T:\mathbb{R}^d \to \mathbb{R}^d$ be a measurable mapping and $\mu\in\mathcal{M}(\mathbb{R}^d)$ a $T$-ergodic probability measure. Then $\mu$ is  diffuse (i.e. has no atom) or carried by a periodic orbit (in which case it is purely atomic with finitely many atoms).
\end{lemme}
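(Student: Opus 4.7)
The plan is a standard dichotomy argument built on the $T$-invariance of $\mu$ combined with ergodicity.

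First, suppose $\mu$ has an atom. I would set $\alpha=\sup_{x\in\R^d}\mu(\{x\})>0$. Since $\mu$ is a probability measure, for any $\varepsilon>0$ there are at most $\lfloor 1/\varepsilon\rfloor$ points with mass at least $\varepsilon$, so the collection of atoms of mass $\ge\alpha/2$ is finite, and the supremum $\alpha$ is attained on a (nonempty) finite set $M=\{x:\mu(\{x\})=\alpha\}$. The crucial observation is that $T(M)\subset M$: by $T$-invariance, for $x\in M$,
\[
\mu(\{T(x)\})=\mu(T^{-1}\{T(x)\})\ge\mu(\{x\})=\alpha,
\]
and then maximality of $\alpha$ forces equality.

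Next, since $T\colon M\to M$ is a self-map of a finite set, the decreasing sequence $T^n(M)$ stabilizes on a set $M_0:=\bigcap_{n\ge 0}T^n(M)$, which is a disjoint union of $T$-cycles; in particular $T(M_0)=M_0$, hence $M_0\subset T^{-n}(M_0)$ for all $n$. Using $T$-invariance again, $\mu(T^{-n}(M_0))=\mu(M_0)$, so
\[
\mu\bigl(T^{-n}(M_0)\setminus M_0\bigr)=0\quad\text{for every }n\ge 0.
\]
Setting $B=\bigcup_{n\ge 0}T^{-n}(M_0)$, this gives $\mu(B)=\mu(M_0)=\alpha\cdot\#M_0>0$. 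By construction $T^{-1}(B)\subset B$, and $\mu(B\setminus T^{-1}(B))=0$ (as the two have equal measure), so $B$ is $\mu$-a.e. $T$-invariant; ergodicity of $\mu$ thus forces $\mu(B)=1$, and in turn $\mu(M_0)=1$.

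Finally, $\mu$ is concentrated on the finite $T$-invariant set $M_0$, which decomposes into finitely many disjoint cycles $C_1,\dots,C_k$. Each $C_j$ is $T$-invariant, so by ergodicity exactly one $C_j$ has full $\mu$-measure, and $\mu$ is carried by that single periodic orbit. Contrapositively, if $\mu$ does not live on a periodic orbit then the assumption fails, i.e.\ $\mu$ has no atom. No step looks like a serious obstacle; the only point requiring a small argument is that the supremum $\alpha$ is attained, which is handled by the elementary counting remark above.
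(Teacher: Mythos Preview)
Your argument is correct. It is, however, organized differently from the paper's proof. The paper first proves a clean dichotomy ``purely atomic or diffuse'': assuming there is a positive-measure set $A$ with no atoms and also an atom $y$, it observes that $\{y\}\subset T^{-n}(\{T^n(y)\})$ forces every $T^n(y)$ to be an atom, and then uses the ergodic theorem (stated there as Poincar\'e recurrence) to place some $T^n(y)$ in $A$, a contradiction; having reduced to the purely atomic case, it invokes ergodicity and recurrence again to conclude that $\mu$ sits on a single periodic orbit. You bypass this two-step scheme entirely: you go straight to the set $M$ of atoms of maximal mass, show $T(M)\subset M$ by invariance and maximality, extract the eventual periodic part $M_0$, and use ergodicity once on the essentially invariant set $B=\bigcup_n T^{-n}(M_0)$ to get $\mu(M_0)=1$. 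Your route is a bit more constructive and avoids any appeal to Poincar\'e recurrence or Birkhoff, at the cost of the small combinatorial detour through $M_0$; the paper's route is shorter to state but leans on recurrence/ergodic-theorem machinery at both stages.
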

\begin{proof}
Assume that there exists a measurable set $A$ with $\mu(A)>0$ and for any $x\in A,$ $\mu(\left\{x\right\})=0.$ Assume that there exists $y$ such that $\mu\Big(\left\{y\right\}\Big)>0.$ Then, for every $n\in\mathbb{N},$ as $$ \left\{y\right\}\subset T^{-n}\Big(\left\{T^{n}(y)\right\}\Big),$$
one has $\mu\Big(\left\{T^{n}(y)\right\}\Big)>0.$ In addition, by Poincarré's recurrence Theorem, there exists $n$ such that $T^{n}(y)\in A,$ so that $$ 0=\mu\Big(\left\{T^{n}(y)\right\}\Big)>0.$$
We conclude that there is no $y$ with $\mu\Big(\left\{y\right\}\Big)>0$ and $\mu$ is either purely atomic or diffuse. In the case where $\mu$ is purely atomic, from the fact that $\mu$ is an ergodic probability measure we deduce that $\mu$ must be carried by an eventually periodic orbit and from Poincarré's recurrence Theorem that this eventually periodic orbit must be a periodic orbit. 
\end{proof}

\begin{proof}

First, recall that  Since Proposition \ref{PropLocDimCenter} is direct if $\eta$ is  atomic, we assume now that $\eta$ has no atoms.
For every $x\in\mathbb{T}$ and $0\leq i\leq b-1,$ write 
$$S_{N,i}(x)=\frac{1}{N}\sum_{n=0}^{N-1} \chi_{[\frac{i}{b},\frac{i+1}{b}[}(b^n x) .$$
By Birkhoff ergodic's theorem, for $0\leq i\leq b-1,$ writing $\widetilde{p}_i =\eta([\frac{i}{b},\frac{i+1}{b}])$, for $\eta$-almost every $x$, for every $0 \leq i\leq b-1,$ $$S_{N,i}\to\widetilde{ p}_i.$$
Since $\mu$ is a Bernoulli measure, this implies that, for $\eta$-almost every $x$, $$\lim_{n\to +\infty}\frac{\log \mu\Big( D_{b,n}(x)\Big)}{-n\log b}=\frac{-\sum_{0\leq i\leq b-1}\widetilde{p_i}\log p_i}{\log b}:=\alpha.$$
Let us fix $\tau>1.$ By Proposition \ref{PropoApproxErgodb}, for $\eta$-almost every $x,$ there exists $N_x$ large enough so that for every $n\geq N_x,$ for every $k\in\mathbb{N},$ one has $$\vert x-\frac{k}{b^n}\vert \geq \frac{1}{b^{n\tau}} .$$
Thus we conclude that for every $n\geq N_x,$ one has $$B(x,b^{-n\tau})\subset D_{b,n}(x)\subset B(x,b^{-n}).$$
This yields $$\limsup_{r\to 0^+}\frac{\log \mu\Big(B(x,r)\Big)}{\log r}\leq\lim_{n\to +\infty}\frac{\log \mu\Big( D_{b,n}(x)\Big)}{-n\log b} \leq \tau \liminf_{r\to 0^+}\frac{\log \mu\Big(B(x,r)\Big)}{\log r}. $$
Since $\tau>1$ was arbitrary, letting $\tau\to 1$ along a countable sequence proves the claim.
\end{proof}

%
%

We now prove Lemma \ref{PropoApproxErgodb}.

\begin{proof}
Fix $\tau>1$ and assume that $$\eta\Big(A_{b,\tau}\Big)>0.$$ For every $0\leq i\leq b-1,$ for every $x\in\mathbb{T},$ $$S_{N,i}(x)=\frac{1}{N}\sum_{n=0}^{N-1} \chi_{[\frac{i}{b},\frac{i+1}{b})}(b^n x) $$
is an ergodic average. Thus writing $p_i =\eta\Big([\frac{i}{b},\frac{i+1}{b})\Big)$, for $\eta$-almost every $x\in A_{b,\tau},$ one has $$\lim_{N \to+\infty}S_{N,i}=p_i.$$ 
On the other hand, for every $x\in A_{b,\tau}\setminus \left\{\frac{k}{b^n}, n\in\mathbb{N} ,0\leq k\leq b^n\right\}$, there exists a sequence $(n_k)_{m\in\mathbb{N}}$ of integers such that for every $k\in\mathbb{N},$ $$\vert x-\frac{m_k}{b^{n_k}}\vert \leq \frac{1}{b^{\tau n_k}} $$
for some $m_k \in\mathbb{N}.$ This, depending on the sign of $x-\frac{m_k}{b^{n_k}}$ implies that the coding on base $b$ of $x$ contains at least $(\tau -1)n_k$ $0$ or $b-1$ in $n_k$-th position. Thus, for $j=0$ or $b-1,$  there exist a sub-sequence of integers $\widetilde{n}_k$ such that for every $k\in \mathbb{N},$ the expansion in base $b$ of $x$ contains $(\tau-1)\widetilde{n}_k$ $j$ in $\widetilde{n}_k$-th position. Assume now that there exists $1\leq i\neq j\leq b-1$ such that $p_i>0$ and fix $\varepsilon>0$ and  $N$ large enough so that for every $n\geq N$, $$\vert S_{n,i} -p_i \vert \leq \varepsilon.$$
Fix $\widetilde{n}_k>N.$ we have  $$\vert S_{\widetilde{n}_k,i} -p_i \vert\leq \varepsilon$$
and the coding of $x$ contains $(\tau-1)\widetilde{n}_k$ $j \neq i$ in $\widetilde{n}_k$'th position. This yields that $$S_{\tau \widetilde{n}_k} \leq p_i +\varepsilon-\frac{\tau-1}{\tau}<p_i -\varepsilon$$ provided that $\varepsilon$ was chosen small enough to begin with. We conclude that $p_j=1$ and $p_i=0$ for every $i\neq j.$
 
Recall that $x\in\mathbb{T}$ is called generic for $\eta$ if $\frac{1}{N}\sum_{k=0}^{N-1}\delta_{b^k x}$ 
 converges weakly to $\eta$ and that $\eta$-almost every $x$ is generic.  Thus for every interval $I\subset \mathbb{T}$, since $\eta(\partial I)=0,$ for any $\eta$-generic $x$, one has  
\begin{equation}
\label{equaibar}
\lim_{N \to+\infty}\frac{1}{N}\sum_{0\leq k\leq N-1}\chi_{I}(b^n x)=\eta(I).
\end{equation}
  
 Now, fix $x$ $\eta$-generic and such that $S_{N,i}(x)\to p_i$ for every $0\leq i\leq b-1$ and $\underline{i}=(i_1,...,i_m)\in\left\{0,...,b-1\right\}^m $. Assume that there exists $1\leq k\leq m$ such that $i_k \neq j$, call $I_{\underline{i}}$ the projection in base $b$ of the cylinder $[\underline{i}]$ on $\mathbb{T}.$ If $\eta(I_{\underline{i}})>0,$ by \eqref{equaibar}, writing $(x_n)_{n\in\mathbb{N}}$ the coding of $x$ in base $b$, one gets
$$ \lim_{N \to+\infty}\frac{1}{N}\sum_{0\leq n\leq N-1}\chi_{I_{\underline{i}}}(b^n x)= \lim_{N \to+\infty}\frac{1}{N}\sum_{0\leq n\leq N-1}\chi_{\underline{i}}((x_n,...,x_{n+m-1}))=\eta(I_{\underline{i}}).$$ 
This implies in particular that 
  
  $$\liminf_{N \to+\infty}\frac{1}{N}\sum_{0\leq n\leq N-1}\chi_{[\frac{i_k}{b},\frac{i_k +1}{b}[}(b^n x)=\liminf_{N \to+\infty}\frac{1}{N}\sum_{0\leq n\leq N-1}\chi_{i_k}(x_n)\geq \frac{\eta(I_{\underline{i}})}{2m}>0,$$
which is a contradiction. Thus   $\eta([\underline{i}])=0$ whenever there exists $i_k$ such that $i_k \neq j$ but this implies $\eta= \delta_0$. The assumption  $\eta\Big(A_{b,\tau}\Big)>0$ was therefore false.   
\end{proof}
We also isolate the following corollary, which will be useful later on.
\begin{corollary}
\label{Corodya}
Let $\nu \in\mathcal{M}(\mathbb{T}^1)$ be a non atomic $\times$ b ergodic measure. Then for every $0<\varepsilon<1,$ for $\eta$-almost every $x\in\mathbb{T}^1,$ there exists $n_{x,\varepsilon}>0$ such that for every $n\geq n_{x,\varepsilon},$ one has  $$ D_{b,n}(x)\subset B(x,b^{-n})\subset D_{b, \lfloor (1-\varepsilon)n \rfloor}(x) $$
\end{corollary}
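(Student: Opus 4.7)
The plan is to note that the first inclusion $D_{b,n}(x)\subset B(x,b^{-n})$ is a tautology: a $b$-adic interval of generation $n$ has $\|\cdot\|_\infty$-diameter exactly $b^{-n}$ and contains $x$, so any $y$ in it satisfies $|x-y|\leq b^{-n}$. Consequently only the second inclusion requires genuine work, and it is where the ergodicity assumption enters.

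For the second inclusion, observe that $B(x,b^{-n})\subset D_{b,m}(x)$ with $m=\lfloor (1-\varepsilon)n\rfloor$ is equivalent to $x$ being at $\|\cdot\|_\infty$-distance at least $b^{-n}$ from both endpoints of $D_{b,m}(x)$, or equivalently from every point of the form $k/b^m$ with $k\in\mathbb{Z}$. I therefore fix $\tau\in\bigl(1,\tfrac{1}{1-\varepsilon}\bigr)$. Since $n/\lfloor (1-\varepsilon)n\rfloor\to \tfrac{1}{1-\varepsilon}$ as $n\to+\infty$, there is a deterministic integer $N_\varepsilon$ such that $n\geq \tau m$, and hence $b^{-n}\leq b^{-\tau m}$, for every $n\geq N_\varepsilon$.

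Next I invoke Lemma \ref{PropoApproxErgodb} (which applies since $\nu$ is non-atomic) with this $\tau$: one has $\nu(A_{b,\tau})=0$, so for $\nu$-almost every $x$ there exists $m_x$ such that
$$\Bigl|x-\tfrac{k}{b^m}\Bigr|\geq b^{-\tau m}\qquad\text{for every }m\geq m_x\text{ and every }k\in\mathbb{Z}.$$
Setting $n_{x,\varepsilon}:=\max\bigl\{N_\varepsilon,\bigl\lceil m_x/(1-\varepsilon)\bigr\rceil\bigr\}$ and combining these two estimates yields $|x-k/b^m|\geq b^{-\tau m}\geq b^{-n}$ for every $n\geq n_{x,\varepsilon}$ and every $k$, which is exactly the condition ensuring $B(x,b^{-n})\subset D_{b,m}(x)$.

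There is no real obstacle here: the corollary is essentially a reformulation of Lemma \ref{PropoApproxErgodb}. The only delicate point is to choose $\tau>1$ strictly smaller than $1/(1-\varepsilon)$ so that the comparison $n\geq \tau m$ does hold eventually, after which the Diophantine estimate supplied by the lemma closes the argument.
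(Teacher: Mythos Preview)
Your proof is correct and follows essentially the same approach as the paper: the corollary is stated without a separate proof, but the identical mechanism---applying Lemma~\ref{PropoApproxErgodb} with a suitable $\tau>1$ to bound $x$ away from $b$-adic endpoints---appears verbatim in the proof of Proposition~\ref{PropLocDimCenter}, where the conclusion $B(x,b^{-n\tau})\subset D_{b,n}(x)\subset B(x,b^{-n})$ is derived in the same way. Your only additional contribution is the explicit bookkeeping showing that $\tau\in(1,\tfrac{1}{1-\varepsilon})$ makes $n\geq\tau\lfloor(1-\varepsilon)n\rfloor$ hold eventually, which is exactly what is needed to translate that inclusion into the form stated in the corollary.
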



\subsection{Recalls on mass transference principle for self-similar measures}
This section is dedicated to some recall on a key result one will use in order to prove Theorem \ref{ThmRand} and Theorem \ref{ThmTree}. 

\begin{theoreme}[\cite{ED3}]
\label{MTPss}
Let $\mu \in\mathcal{M}(\mathbb{R}^d)$ be a self-similar measure. Let $(B_n)_{n\in\mathbb{N}}$ be a sequence of balls such that $\vert B_n \vert \to 0$  and $$\mu\Big(\limsup_{n\to+\infty}B_n\Big)=1.$$
Let $0\leq s<\dim_H \mu$ and $(U_n)_{n\in\mathbb{N}}$ be a sequence of open sets such that for every $n\in\mathbb{N}$,
\begin{itemize}
\item[•] $U_n \subset B_n,$ \medskip
\item[•] $\mathcal{H}^{s}_{\infty}\Big(U_n \cap \supp(\mu)\Big)\geq \mu\Big(B_n\Big).$
\end{itemize}
Then $\dim_H \limsup_{n\to+\infty}U_n \geq s.$
\end{theoreme}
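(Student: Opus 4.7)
My plan is to prove $\dim_H\limsup_n U_n\geq s$ by building inside $\limsup_n U_n$ a generalized Cantor set supporting a mass distribution $m$ that is $s$-Frostman up to $\varepsilon$-losses, and then invoking the mass distribution principle. The heart of the argument is the standard mass-transference trade: at every scale, exchange the $\mu$-mass of $B_n$ against the $s$-dimensional Hausdorff content of $U_n$ that the hypothesis provides.

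First I would invoke exact dimensionality of $\mu$ (Feng--Hu) with $t:=\dim\mu$ and fix $\varepsilon\in(0,t-s)$. There exists a compact set $F\subset\supp(\mu)$ with $\mu(F)>0$ and a scale $r_0>0$ on which $\mu(B(x,r))\leq r^{t-\varepsilon}$ uniformly for $x\in F$ and $r\leq r_0$. Restricting to balls centered near $F$, I extract a subsequence $(B_{n_k})$ of $(B_n)$ with $|B_{n_k}|\to 0$ and $F\subset\limsup_k B_{n_k}$ up to a $\mu$-null set; this uses $\mu(\limsup_n B_n)=1$. For each $k$, the hypothesis $\mathcal{H}^s_\infty(U_{n_k}\cap\supp(\mu))\geq \mu(B_{n_k})$ produces, via the standard content-to-packing lemma, a finite family $\mathcal{G}_k$ of pairwise disjoint closed balls contained in $U_{n_k}$, centered on $\supp(\mu)$, with $\sum_{G\in\mathcal{G}_k}|G|^s \geq c\,\mu(B_{n_k})$.

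Second, I would construct the Cantor tree inductively: choose a root ball $C_0$ containing a positive-$\mu$-mass piece of $F$; at each step, inside every surviving cell $C$ at level $\ell$ pick some $B_{n_k}\subset C$ with $|B_{n_k}|\ll|C|$ (possible since $\mu(F\cap C)>0$ and $F\subset\limsup_k B_{n_k}$), then replace $C$ by its children $\mathcal{G}_k\subset U_{n_k}\subset B_{n_k}\subset C$, distributing the mass $m(C)$ among them proportionally to $|G|^s$. The limiting measure $m$ is supported in $\limsup_k U_{n_k}\subset \limsup_n U_n$ with $m(C_0)>0$. A single child $G$ carries mass at most $m(C)\,|G|^s/\mu(B_{n_k})\leq m(C)\,|G|^s\,|B_{n_k}|^{-(t-\varepsilon)}$, which is $\leq m(C)\,|G|^s$ since $|G|\leq|B_{n_k}|$ and $s<t-\varepsilon$.

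The remaining and main difficulty is to bound $m(B(x,r))$ for arbitrary radii lying between consecutive Cantor scales. At the generation in which $r$ falls, at most finitely many children of a common parent intersect $B(x,r)$, and their combined $s$-weights are bounded by a multiple of $r^s$ using the pairwise disjointness of balls in $\mathcal{G}_k$. Composing with the per-generation estimate above yields $m(B(x,r))\lesssim r^{s-O(\varepsilon)}$ at all scales; the mass distribution principle then gives $\dim_H\limsup_n U_n\geq s-O(\varepsilon)$, and letting $\varepsilon\to 0$ concludes. The delicate point is precisely the packing inequality $\sum_{G\cap B(x,r)\neq\emptyset}|G|^s\lesssim r^s$ inside a single generation: this is not automatic from the content hypothesis, and I would handle it by building a small separation margin into each $\mathcal{G}_k$, and by choosing the inductive balls $B_{n_k}$ sufficiently deeply nested inside their parents so that no scale-mixing between generations spoils the estimate.
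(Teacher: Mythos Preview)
The paper does not contain a proof of this statement: Theorem~\ref{MTPss} is quoted from the external reference~\cite{ED3} and used as a black box in the subsequent arguments. There is therefore no ``paper's own proof'' to compare your proposal against.

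That said, a brief assessment of your sketch on its merits. The overall architecture---exact dimensionality via Feng--Hu, content-to-packing conversion inside each $U_n$, an inductive Cantor construction, and the mass distribution principle---is indeed the standard route for mass-transference results and matches the spirit of the arguments in~\cite{ED3}. Two points, however, are genuinely incomplete as written. First, the induction step ``inside every surviving cell $C$ pick some $B_{n_k}\subset C$'' does not follow directly from $\mu(\limsup_n B_n)=1$: you only know that many $B_n$ \emph{intersect} $C$, not that they are contained in it, and the self-similar structure of $\mu$ is what is typically used to propagate full-measure limsups to sub-cylinders. Second, the intermediate-scale bound $\sum_{G\cap B(x,r)\neq\emptyset}|G|^s\lesssim r^s$---which you correctly flag as the crux---is not resolved by ``building in a small separation margin'': disjointness of the balls in $\mathcal{G}_k$ gives a volume bound of order $r^d$, not $r^s$, and one needs instead to control the number of children meeting $B(x,r)$ via the $\mu$-regularity on $F$ (this is where the hypothesis $s<\dim_H\mu$ is actually consumed). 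Both issues are handled in~\cite{ED3}, but your sketch does not yet supply the mechanisms.
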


\subsection{Content estimates}
\label{SecContEsti}
Let us fix $S$ a $b$-adic IFS, call $\mu_0$ the corresponding Alfhors-regular self-similar measure and $\widetilde{K}$ its attractor.  In this section, in order to avoid possible confusion between $b$-adic intervals and $b$-adic cubes of $[0,1]^2,$ we will denote $I$ $b$-adic intervals and $D$ $b$-adic cubes. In addition, we will identify when necessary the measure $\pi_2 \mu_0$ with its one dimensional natural counterpart. 

Let us start by remarking that, if there exists a $b$-adic cube $D$ such that $\mu_0(\partial D)>0,$ then either $\pi_1 \mu_0$ or $\pi_2 \mu_0$ has an atom. Since these measures are self-similar, one obtains that this measure is a Dirac mass so that $\mu_0$ is supported on an horizontal or vertical line. \textbf{In the rest of the section, we assume that $\mu_0$ is not supported on an horizontal or vertical line, hence satisfies that $\mu_0(\partial D)=0$ for any $b$-adic  cube $D$}.  

Given $\eta \in\mathcal{M}(\mathbb{R}^d)$ a Borel set $A \in\mathcal{M}(\mathbb{R}^d)$ and $s\geq 0,$ define $$\mathcal{H}^{\eta,s}_{\infty}(A)=\inf\left\{\mathcal{H}^s_{\infty}(E), \ E\subset A : \ \mu(E)=\mu(A)\right\}.$$

The following result, established as \cite[Theorem 2.6]{ED3}.

\begin{theoreme}
\label{contss}
Let $\Omega$ be an open set and $0\leq s \leq\dim_H \mu_0:=s_0$. Then there exists a constant $C$ (depending only on $\mu_0$ and $s$) such that 
\begin{equation}
\label{EquaMucontss}
C\mathcal{H}^{s}_{\infty}\Big(\Omega \cap \widetilde{ K}\Big)\leq \mathcal{H}^{\mu_0,s}_{\infty}\Big(\Omega\Big)\leq \mathcal{H}^{s}_{\infty}\Big(\Omega \cap \widetilde{ K}\Big).
\end{equation}
\end{theoreme}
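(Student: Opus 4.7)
The upper bound $\mathcal{H}^{\mu_0,s}_\infty(\Omega) \le \mathcal{H}^s_\infty(\Omega \cap \widetilde K)$ is immediate from the definitions: since $\mu_0$ is supported on $\widetilde K$, one has $\mu_0(\Omega \cap \widetilde K) = \mu_0(\Omega)$, so $E:=\Omega\cap\widetilde K$ is an admissible competitor in the infimum defining $\mathcal{H}^{\mu_0,s}_\infty(\Omega)$.

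All the work is therefore in the lower bound. Fix $E\subset \Omega$ with $\mu_0(E)=\mu_0(\Omega)$ and a covering $(B_n)_{n\ge 1}$ of $E$ by closed balls; the aim is to build a covering of $\Omega\cap\widetilde K$ whose $s$-sum is at most a constant multiple of $\sum_n|B_n|^s$. Set $F:=E\cap\widetilde K$, a $\mu_0$-conull, hence (by Ahlfors regularity) topologically dense, subset of $\Omega\cap\widetilde K$. First I would \emph{recenter}: after discarding the $B_n$'s disjoint from $F$, I replace each remaining $B_n$ by the concentric ball $\widehat B_n$ of doubled radius with center chosen in $B_n\cap F$. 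This produces a cover of $F$ by balls centered in $\widetilde K$, so that Ahlfors regularity gives $\mu_0(\widehat B_n)\asymp |\widehat B_n|^{s_0}$, while $\sum_n|\widehat B_n|^s \le 2^s\sum_n|B_n|^s$. Second, I would \emph{$b$-adicise}: each $\widehat B_n$ is contained in $O_d(1)$ $b$-adic cubes of generation comparable to its diameter scale, which yields a $b$-adic family $\mathcal C$ still covering $F$, meeting $\widetilde K$, and satisfying $\sum_{D\in\mathcal C}|D|^s \le C_1\sum_n|B_n|^s$ for a constant $C_1=C_1(b,d,s)$.

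The third and delicate step is to upgrade $\mathcal C$ into a cover of all of $\Omega\cap\widetilde K$. The residual set $N:=(\Omega\cap\widetilde K)\setminus\bigcup_{D\in\mathcal C}D$ is $\mu_0$-null; by Ahlfors regularity of $\mu_0$ on $\widetilde K$ it has empty interior in $\widetilde K$, and since $\mathcal C$ is $b$-adic, $N$ is pinned to the $b$-adic skeleton $\bigcup_{D\in\mathcal C}\partial D$. Adjoining to $\mathcal C$ the finitely many $b$-adic cubes sharing a face with each $D\in\mathcal C$ (at the same generation or at a bounded parent generation) produces a $b$-adic cover $\widetilde{\mathcal C}$ of $\Omega\cap\widetilde K$ with $\sum_{D\in\widetilde{\mathcal C}}|D|^s \le C_2\sum_{D\in\mathcal C}|D|^s$, where $C_2=C_2(b,d)$. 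Combining the three steps and taking the infimum over admissible $E$ and covers $(B_n)$ yields the lower bound with $C=(2^sC_1C_2)^{-1}$. The main obstacle, in my view, is this third step: in general a $\mu_0$-null subset of $\widetilde K$ can carry positive $\mathcal H^s_\infty$-content when $s<s_0$ (e.g.\ proper self-similar sub-attractors of intermediate dimension), so the null residue cannot simply be discarded. The rescuing mechanism is that our null set arises specifically as the complement of an open $b$-adic union, which forces it onto the self-similar $b$-adic grid, and the OSC for $S_{\mathcal A}$ together with the uniform Ahlfors structure of $\mu_0$ is what lets one control the number of sibling cubes needed by an absolute constant.
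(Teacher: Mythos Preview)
The paper does not give a proof of this theorem: it is imported from \cite{ED3} (Theorem~2.6 there), so there is no ``paper's own proof'' to compare against beyond noting that the cited argument treats general self-similar measures via the symbolic tree structure rather than by a direct covering manipulation.

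On the merits of your attempt: the upper bound is fine, and steps one and two of the lower bound are routine. Step three, however, does not go through. The assertion that the residue $N=(\Omega\cap\widetilde K)\setminus\bigcup_{D\in\mathcal C}D$ is ``pinned to the $b$-adic skeleton $\bigcup_{D\in\mathcal C}\partial D$'' is false, and adjoining a bounded number of same-generation neighbours (or bounded-generation ancestors) to each $D\in\mathcal C$ need not cover $N$. A clean obstruction already appears when $\widetilde K=[0,1]$, $b=2$, $\mu_0=\mathcal L^1$, $\Omega=(0,1)$: fix any non-dyadic $z\in(0,1)$, take $E=\Omega\setminus\{z\}$, and cover $E$ by the family of dyadic intervals $D$ satisfying $\mathrm{dist}(D,z)\ge c\,2^{-g(D)}$ for a constant $c$ chosen larger than the total dilation produced by your neighbour/ancestor operation. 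Every $y\ne z$ lies in such a $D$ once $2^{-g}\ll |y-z|$, so this is a legitimate cover of $E$; yet by construction neither any $D$, nor any same-generation neighbour of $D$, nor any ancestor of bounded depth, contains $z$. Thus your $\widetilde{\mathcal C}$ misses $z$. The slogan ``$\mu_0$-null complement of an open $b$-adic union lies on the grid'' is simply not true: having empty interior in $\widetilde K$ does not force a set onto cube boundaries.

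What makes the lower bound work is a different mechanism. One shows that for every open $\Omega$ there is a measure $\nu$ on $\Omega\cap\widetilde K$ which is \emph{absolutely continuous with respect to $\mu_0$}, satisfies $\nu(B(x,r))\le Cr^s$ for all balls, and has $\nu(\Omega)\ge c\,\mathcal H^s_\infty(\Omega\cap\widetilde K)$. Granting this, any $E\subset\Omega$ with $\mu_0(E)=\mu_0(\Omega)$ automatically has full $\nu$-measure, and the Frostman bound gives $\sum_n|B_n|^s\ge C^{-1}\nu(E)=C^{-1}\nu(\Omega)\ge cC^{-1}\mathcal H^s_\infty(\Omega\cap\widetilde K)$ for every cover $(B_n)$ of $E$, which is exactly the missing inequality. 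Building such a $\nu$ uses the self-similar tree (distribute mass along a near-optimal cylinder cut-set of $\Omega$ and push forward by the coding map); the OSC and Ahlfors regularity enter there, not in any boundary-absorption trick.
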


Write $$F=\bigcup_{n \geq 0 , D\in\mathcal{D}_n}\partial D.$$
As a consequence of Theorem \ref{contss} and the fact that $\mu_0(F)=0,$ there  exists a constant $C>0$ such that for any open sets $\Omega$  $$\mathcal{H}^{s}_{\infty}(\Omega \cap K)\leq\mathcal{H}^{s}_{\infty}(\Omega \cap \widetilde{K}) \leq C \mathcal{H}^{s}_{\infty}(\Omega \cap K),$$
where $$K=\widetilde{K}\setminus F.$$ 
Moreover, since $S$ satisfies the open set condition with open set $(0,1)^2$, it is direct to verify that for any open set $\Omega \subset [0,1]^2$ and any $b$-adic cube $D$, one has $$f_{D}(\Omega \cap K)=f_D(\Omega)\cap K,$$
where $f_D$ denotes the canonical contraction from $[0,1)^2$ to $D$.

\begin{proposition}
\label{MainPropoCont}
Let $I $ be a $b$-adic interval, $x\in [0,1]\times I\cap K$ and let us write $$\mbox{Str}(I)=[0,1]\times I.$$
Let $K$ be a base $b$ missing digit set. Then for every $0<s\leq \dim_H K$, one has $$\frac{C_{s}}{-\log \vert I \vert}\leq \frac{ \mathcal{H}^{s}_{\infty}\Big(\mbox{Str}(I)\cap K\Big)}{\min_{0\leq \gamma \leq 1}b^{-(s-s_0)\lfloor \gamma n\rfloor}\times\pi_2\mu_0(D_{\lfloor \gamma n\rfloor}(\pi_2(x)))}\leq 1$$
\end{proposition}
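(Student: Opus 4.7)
The plan has two parts, matching the two sides of the inequality. Write $|I| = b^{-n}$, let $(j_1^*, \ldots, j_n^*)$ be the first $n$ base-$b$ digits of $\pi_2(x)$ (so that $I = D_{b,n}(\pi_2(x))$), and set $f(m) := b^{-(s-s_0)m}\, \pi_2\mu_0(D_{b,m}(\pi_2(x)))$.

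For the upper bound, I fix $\gamma \in [0,1]$ and $m = \lfloor \gamma n \rfloor \leq n$, and cover $\mbox{Str}(I)\cap K$ by $b$-adic squares of generation $m$. Because $m \leq n$, the strip $I$ sits inside the single generation-$m$ vertical strip $D_{b,m}(\pi_2(x))$, so any generation-$m$ cube meeting $K \cap \mbox{Str}(I)$ must have its $y$-digit sequence equal to $(j_1^*, \ldots, j_m^*)$. The number of such cubes in $K$ is therefore $\prod_{l=1}^m \#\{i : (i, j_l^*) \in \mathcal{A}\} = (\#\mathcal{A})^m \prod_{l=1}^m p_{j_l^*, \mathcal{A}} = b^{m s_0}\, \pi_2\mu_0(D_{b,m}(\pi_2(x)))$. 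Since each such cube has $\|\cdot\|_\infty$-diameter $b^{-m}$, the $s$-content is bounded above by $f(m)$; minimizing over $\gamma$ gives the upper bound (with constant $1$).

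For the lower bound, I use a Frostman-type mass distribution. Let $m^* \in \{0, 1, \ldots, n\}$ minimize $f$, and define a measure $\nu$ on $\mbox{Str}(I)\cap K$ by assigning mass $b^{-m^* s}$ to each of the generation-$m^*$ cubes of $K$ contained in $\mbox{Str}(I)$ (counted above), distributed inside $D\cap K$ proportionally to $\mu_0 \!\mid_D$. Then $\nu(\mbox{Str}(I)\cap K) = f(m^*)$. I then verify the Frostman condition $\nu(B) \leq C|B|^s$ by case analysis on a ball $B$ of diameter $r=b^{-k}$: when $k \geq m^*$ the ball sits inside $O(1)$ generation-$m^*$ cubes and $s_0$-Ahlfors regularity of $\mu_0$ together with $s \leq s_0$ gives $\nu(B) \leq C b^{-m^* s}(b^{m^*-k})^{s_0} \leq C|B|^s$; when $k < m^*$, a direct count of generation-$m^*$ cubes of $K$ lying in $B\cap \mbox{Str}(I)$, exactly as in the upper-bound argument but with the $x$-digits prescribed up to level $k$, yields $\asymp b^{(m^*-k)s_0}\, \pi_2\mu_0(D_{b,m^*}(\pi_2(x)))/\pi_2\mu_0(D_{b,k}(\pi_2(x)))$ such cubes, hence $\nu(B) \leq C\, b^{-ks}\, f(m^*)/f(k) \leq C b^{-ks}$ by minimality of $m^*$. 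The mass distribution principle then gives $\mathcal{H}^s_\infty(\mbox{Str}(I)\cap K) \geq C^{-1} f(m^*) = C^{-1}\min_m f(m)$, which is in fact stronger than the stated bound; the $1/(-\log |I|)$ factor comfortably absorbs technical losses from boundary effects (balls straddling two $b$-adic cubes) and the $\lfloor \gamma n\rfloor$-discretization.

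The main obstacle is the large-ball case ($k<m^*$) of the Frostman verification: one must extract, from the missing-digit structure, the precise ratio $\pi_2\mu_0(D_{b,m^*}(\pi_2(x)))/\pi_2\mu_0(D_{b,k}(\pi_2(x)))$, then invoke the minimality of $m^*$ to close the estimate. The upper bound and the small-ball case of Frostman are essentially direct applications of Ahlfors regularity and the product structure of $\mathcal{A}$.
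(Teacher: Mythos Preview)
Your upper bound is exactly the paper's covering argument. For the lower bound, the paper does something quite different: it fixes an arbitrary cover of $\mathrm{Str}(I)\cap K$ by $b$-adic cubes of generation $\le n$, pigeonholes over the $n+1$ possible generations to find some level $k$ whose cubes carry at least a $\tfrac{1}{n+1}$-fraction of $\mu_0(\mathrm{Str}(I))$, and then uses self-similarity to lower-bound the number of such cubes and hence $\sum_{D\in\mathcal C}|D|^s$. That pigeonhole step is precisely what produces the $1/(-\log|I|)$ factor.

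Your Frostman approach is a genuine alternative and, done correctly, removes that logarithmic loss. But there is a gap in your construction: when $m^*<n$, no generation-$m^*$ cube is \emph{contained} in $\mathrm{Str}(I)$, since the strip has height $b^{-n}<b^{-m^*}$. The $N_{m^*}=b^{m^*s_0}\pi_2\mu_0(D_{m^*})$ cubes you are counting merely \emph{meet} the strip; spreading the mass over $D\cap K$ proportionally to $\mu_0|_D$ then yields a measure supported on the larger strip $\mathrm{Str}(D_{m^*}(\pi_2(x)))\cap K$, so the mass-distribution principle only bounds the content of that bigger set. Restricting $\nu$ to $\mathrm{Str}(I)$ does not help either, as the total mass drops to $b^{(s_0-s)m^*}\pi_2\mu_0(I)$, which need not be $\gtrsim f(m^*)$. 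The repair is to push the uniform distribution down to level $n$: put equal mass $f(m^*)/N_n$ on each generation-$n$ cube $C\subset\mathrm{Str}(I)\cap K$, distributed inside $C$ by $\mu_0$. The product digit structure forces every generation-$k$ cube meeting $\mathrm{Str}(I)\cap K$ to contain exactly $N_n/N_k$ such $C$'s, giving $\nu(D)=f(m^*)/N_k=b^{-ks}f(m^*)/f(k)\le b^{-ks}$ for all $k\le n$ by minimality of $m^*$; your small-ball case becomes $k\ge n$ and goes through unchanged via Ahlfors regularity. With this adjustment your argument is complete and in fact sharper than the paper's.
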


\begin{proof}
Notice that, because $\pi_2$ induces a semi-conjugacy between $S$ and the projected IFS on $\left\{0\right\}\times [0,1]$ (with the base $b$ shifts), denoting $\sigma$ both of these shifts, one has $$\begin{cases}\mu_0(\mbox{Str}(I))=\pi_2 \mu_0 (D_n(\pi_2(x))) \\ \mu_0 \Big(\mbox{Str}\Big(I_{\lfloor (1-\gamma)n\rfloor}(\pi_2(\sigma^{\lfloor \gamma n\rfloor}(x)))\Big)\Big)=\pi_2 \mu_0\Big(D_{\lfloor (1-\gamma)n\rfloor}(\sigma^{\lfloor \gamma n\rfloor}(\pi_2(x)))\Big)\end{cases}.$$
In addition, denoting $(p_i)_{1\leq i \leq b}$ the probability vector associated with the self-similar measure $\pi_2 \mu_0$ and  $(y_n)_{n\in\mathbb{N}}$ the coding in base $b$ of $\pi_2 (x),$ one has $$\begin{cases}\pi_2 \mu_0 (D_n(\pi_2(x))) =\prod_{k=1}^n p_{y_i} \\ \pi_2 \mu_0\Big(D_{\lfloor (1-\gamma)n\rfloor}(\sigma^{\lfloor \gamma n\rfloor}(\pi_2(x)))\Big)=\prod_{k=\lfloor \gamma n\rfloor+1}^{\lfloor \gamma n\rfloor +\lfloor (1-\gamma)n\rfloor}p_{y_i}. \end{cases} $$  
Thus there exists a constant $C>0$ such that for any $0\leq \gamma \leq 1,$ $$ C^{-1} \prod_{i=1}^{\lfloor \gamma n\rfloor}p_{y_i} \leq \frac{\mu_0\Big(\mbox{Str}(I)\Big)}{\mu_0 \Big(\mbox{Str}\Big(I_{\lfloor (1-\gamma)n\rfloor}(\pi_2(\sigma^{\lfloor \gamma n\rfloor}(x)))\Big)\Big)} \leq C \prod_{i=1}^{\lfloor \gamma n\rfloor}p_{y_i}=C \pi_2 \mu_0(D_{\lfloor \gamma n\rfloor}(\pi_2(x))).$$

Thus it is sufficient to show that 

$$\frac{C_{s}}{-\log \vert I \vert}\leq \frac{ \mathcal{H}^{s}_{\infty}\Big(\mbox{Str}(I)\cap K\Big)}{\min_{0\leq \gamma \leq 1}b^{-(s-s_0)\lfloor \gamma n\rfloor}\times \frac{\mu_0\Big(\mbox{Str}(I)\Big)}{\mu_0 \Big(\mbox{Str}\Big(I_{\lfloor (1-\gamma)n\rfloor}(\pi_2(\sigma^{\lfloor \gamma n\rfloor}(x)))\Big)\Big)}}\leq 1.$$

 We first establish the lower-bound. Let us first notice that $$\mbox{Str}(I)=\bigcup_{D\in\mathcal{D}_{b,n}: D \cap \mbox{Str}(I)\neq \emptyset}D\cap K.$$
In addition, for any $D \in \mathcal{D}_b,$ since $f_D$ is homothetic of Lipshitz constant $\vert D\vert,$ one has $$\mathcal{H}^s_{\infty}(D\cap K) =\mathcal{H}^s_{\infty}(f_D(K))=\vert D\vert^s \mathcal{H}^s_{\infty}(K)=\kappa_s \vert D\vert^s.$$ Thus, up to multiplying by some constant depending on $s$ if one must, we may consider only coverings by $b$-adic cubes of generation smaller than $n$.

Fix a covering $\mathcal{C}$ of $\mbox{Str}(I)$ by $b$-adic cubes of generation smaller than $n.$ There must exist $0 \leq k\leq n$ such that $$\mu_0\Big(\bigcup_{D \in\mathcal{C}: \ \mbox{gen}(D)=k}D\cap \mbox{Str(I)}\Big) \geq \frac{1}{n+1}\mu_0(R).$$
In addition, notice that,   for every $D_1,D_2 \in \left\{D \in\mathcal{C}: \ \mbox{gen}(D)=k\right\}$, one has $$f^{-1}_{D_1}(\mbox{Str}(I))=f^{-1}_{D_2}(\mbox{Str}(I)),$$
so that, by self-similarity,
\begin{align*}
\mu_0(D_1 \cap \mbox{Str}(I))&=\mu_0(D_1)\times \mu_0\Big(f^{-1}_{D_1}(\mbox{Str}(I)\Big)\\
&=b^{-ks_0}\mu_0\Big(f^{-1}_{D_1}(\mbox{Str}(I)\Big)=\mu_0(D_2)\times \mu_0\Big(f^{-1}_{D_2}(\mbox{Str}(I)\Big)=\mu_0(D_2 \cap \mbox{Str}(I)).
\end{align*}
Moreover, fixing $x\in D_1\cap K,$ one has $$\mu_0(D_1 \cap \mbox{Str}(I))=b^{-ks_0}\mu_0\Big(\mbox{Str}\Big(I_{n-k}(\pi_2(\sigma^k(x)))\Big)\Big),$$
which implies that $$\#\left\{D\in\mathcal{C}: \ \mbox{gen}(D)=k\right\}\geq \frac{\mu_0 (R)}{(n+1)b^{-ks_0}\mu_0\Big(\mbox{Str}\Big(I_{n-k}(\pi_2(\sigma^k(x)))\Big)\Big)}.$$

Thus

\begin{align*}
\sum_{D\in\mathcal{C}}\vert D \vert^s &\geq \sum_{D\in\mathcal{C}: \ \mbox{gen}(D)=k}\vert D \vert^s=b^{-ks} \#\left\{D \in\mathcal{C}: \ \mbox{gen}(D)=k\right\}\\
&\geq b^{-k(s-s_0)}\times \frac{1}{n+1}\times  \frac{\mu_0 (R)}{\mu_0\Big(\mbox{Str}\Big(I_{n-k}(\pi_2(\sigma^k(x)))\Big)\Big)}.
\end{align*}
Noticing that $n+1 \leq 2n=-2\log_b \vert I \vert$ and that the above inequality holds up to some multiplicative constants depending on $s$, the lower-bound is obtained by setting $k=\lfloor n \gamma \rfloor$, with $\frac{k}{n}\leq\gamma \leq \frac{k+1}{n}.$

The upper-bound is simply obtained by covering at the scale $k=\lfloor \gamma n\rfloor$ corresponding to the minimum and applying the same estimates.
\end{proof}
 Given $x\in K,$ Proposition \ref{MainPropoCont},  actually allows one to estimate $\mathcal{H}^s_{\infty}(R\cap K)$ for any rectangle of the form $J \times I$, where $I$ is $b$-adic interval and $J$ is any interval centered on $x$.   
\begin{corollary}
 Let $I$ be a $b$-adic interval and $J$ an interval such that $\frac{1}{b}J \cap K \neq \emptyset$ and $\frac{\vert J\vert}{b}\geq \vert I\vert.$ Let $D $ be the largest b-adic interval contained in $J$. Notice that $f_{D}^{-1}(R)$ is a stripe of the form $[0,1]\times \widetilde{I}$, where $\widetilde{I}$ is a $b$-adic interval. Moreover one has $$b^{-s\mbox{gen}(D)} \mathcal{H}^s_{\infty}(f_{D}^{-1}(R)\cap K)\leq\mathcal{H}^s_{\infty}(K\cap R)\leq 3\times b^{-s\mbox{gen}(D)} \mathcal{H}^s_{\infty}(f_{D}^{-1}(R)\cap K) $$ 
\end{corollary}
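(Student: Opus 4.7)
The plan is to use the self-similarity of $K$ to transport the content of $R\cap K$, where $R = J\times I$, to the setting of Proposition~\ref{MainPropoCont} (a horizontal strip $[0,1]\times \widetilde{I}$). Set $k = \mathrm{gen}(D)$. The hypothesis $|I|\le |J|/b$ together with the bound $|J| < 2b|D|$, which itself follows from the maximality of $D$ inside $J$ (no generation-$(k-1)$ $b$-adic interval fits in $J$), forces $|I|\le |D|$, so $\mathrm{gen}(I)\ge k$. Consequently $D\times I$ is contained in the unique $b$-adic cube $C$ of generation $k$ whose horizontal factor is $D$ and whose vertical factor is the generation-$k$ interval containing $I$. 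Write $f_D := f_C$ for the canonical similarity $[0,1]^2 \to C$ of ratio $b^{-k}$; then $f_D^{-1}(D\times I) = [0,1]\times \widetilde{I}$ for a $b$-adic interval $\widetilde{I}$, which is the $\widetilde{I}$ appearing in the statement.

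For the lower bound, the inclusion $D\subset J$ gives $D\times I\subset R$, so monotonicity of $\mathcal{H}^s_\infty$ yields $\mathcal{H}^s_\infty(R\cap K)\ge \mathcal{H}^s_\infty((D\times I)\cap K)$. The identity $f_D(\Omega \cap K) = f_D(\Omega)\cap K$ for open $\Omega$, valid under the open-set condition and recorded earlier in this section, combined with the usual scaling $\mathcal{H}^s_\infty(f_D(E)) = b^{-sk}\mathcal{H}^s_\infty(E)$, then gives
\[
\mathcal{H}^s_\infty((D\times I)\cap K) \;=\; b^{-sk}\,\mathcal{H}^s_\infty\!\bigl(f_D^{-1}(R)\cap K\bigr),
\]
which is the claimed lower bound.

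For the upper bound, since $|J| < 2b|D|$, the interval $J$ is covered by at most three consecutive generation-$(k-1)$ $b$-adic intervals $D_1^*, D_2^*, D_3^*$. Subadditivity of $\mathcal{H}^s_\infty$ gives
\[
\mathcal{H}^s_\infty(R\cap K) \;\le\; \sum_{i=1}^{3}\mathcal{H}^s_\infty\!\bigl((D_i^*\times I)\cap K\bigr),
\]
and applying the same self-similarity argument at generation $k-1$ to each summand reduces it to $b^{-s(k-1)}$ times the content of a common parent strip $[0,1]\times\widetilde{I}^*$ with $|\widetilde{I}^*| = b|\widetilde{I}|$. Proposition~\ref{MainPropoCont} applied to both $\widetilde{I}$ and $\widetilde{I}^*$ shows the two strip contents differ only by a bounded factor (the minimum defining the strip content is stable under a single generation shift), and absorbing this factor together with the prefactor $b^s = b^{-s(k-1)}/b^{-sk}$ into the constant delivers the stated upper bound.

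The main obstacle is the upper bound: while the covering of $J$ by three generation-$(k-1)$ intervals is immediate from $|J| < 2b|D|$, it remains to check that the generation shift inherent to this approach does not spoil the constant. This relies on the structural estimate of Proposition~\ref{MainPropoCont}, where the dominant minimum over $\gamma\in[0,1]$ depends on $n$ only through its range and is therefore essentially insensitive to replacing $\widetilde{I}$ by its $b$-adic parent $\widetilde{I}^*$, so the factor $b^s$ from the scaling is absorbed into a universal constant.
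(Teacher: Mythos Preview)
Your lower bound is exactly the paper's argument: $D\subset J$ gives $D\times I\subset R$, and self-similarity plus scaling yields the factor $b^{-sk}$.

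For the upper bound your route diverges from the paper's and carries a genuine gap. Two points. First, a slip of sign: since $f_{I_C^*}^{-1}$ expands by $b^{k-1}$ while $f_{I_C}^{-1}$ expands by $b^{k}$, one gets $|\tilde I^*|=|\tilde I|/b$, not $b|\tilde I|$; in fact $\tilde I^*=f_j(\tilde I)$ where $j$ is the child index of $I_C$ inside $I_C^*$. Second, and more seriously, comparing $\mathcal H^s_\infty([0,1]\times\tilde I^*\cap K)$ with $\mathcal H^s_\infty([0,1]\times\tilde I\cap K)$ by invoking Proposition~\ref{MainPropoCont} twice does not give a bounded ratio: the lower estimate in that proposition carries the prefactor $C_s/(-\log|I|)$, so the comparison costs a factor of order $\mbox{gen}(\tilde I)$, not a constant. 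One can repair this by a direct self-similarity decomposition $[0,1]\times\tilde I^*\cap K=\bigcup_{i:(i,j)\in\mathcal A} g_{(i,j)}\bigl(([0,1]\times\tilde I)\cap K\bigr)$, which yields a clean factor $\le b\cdot b^{-s}$, but that is not the argument you gave.

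The paper sidesteps this detour entirely by staying at generation $k$. It covers $J$ by the generation-$k$ intervals it meets; for each such $D_i$ one has $f_{D_i}^{-1}\bigl((D_i\times I_C)\cap R\bigr)\subset [0,1]\times\tilde I=f_D^{-1}\bigl((D\times I_C)\cap R\bigr)$, because $D\subset J$ forces the horizontal factor to be full for $D$ while it may be partial for the others, and the vertical factor $\tilde I$ is the \emph{same} for every piece. Self-similarity then gives $\mathcal H^s_\infty(D_i\cap R\cap K)\le \mathcal H^s_\infty(D\cap R\cap K)$ directly, with no second stripe and no further appeal to Proposition~\ref{MainPropoCont}. (The paper's phrase ``at most two more'' cubes is itself loose for general $b$---the honest count is $O(b)$---but in the application one has $|J|\approx 2|D|$, where three cubes suffice.)
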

\begin{proof}
Simply notice that $D\cap R$ intersects $K$ and contains a full stripe. In addition, $R$  intersects at most two more dyadic cubes of the same generation as $D$, say $D_1$ and $D_2$, but by self-similarity $$\max_{i=1,2}\mathcal{H}^{s}_{\infty}(D_i \cap R \cap K)\leq \mathcal{H}^s_{\infty}(R\cap D\cap K)$$
so that $$\mathcal{H}^s_{\infty}(R\cap D\cap K)\leq \mathcal{H}^s_{\infty}(K\cap R)\leq 3\mathcal{H}^s_{\infty}(R\cap D\cap K).$$
In addition, since $f_D$ is homotethic, $$\mathcal{H}^s_{\infty}(f_D^{-1}(R)\cap K)=\vert D\vert^{-s}\mathcal{H}^s_{\infty}(D\times I \cap K).$$
\end{proof}

\begin{proposition}
\label{Propoascontent}
Let $\nu \in\mathcal{M}([0,1]^2)$ be a $\times b$ ergodic measure. Then, for $\nu$-almost every $x$ for every $\varepsilon>0$, there exists $r_x>0 $ such that for every $r\leq r_x$, for every $0\leq s\leq s_0,$ one has $$r^{\max\left\{s\tau_1, s\tau_2 -(\tau_2-\tau_1)(s_0 -\alpha_{\nu})\right\}+\varepsilon} \leq \mathcal{H}^{s}_{\infty}\Big((x+(-r^{\tau_1},r^{\tau_1})\times(-r^{\tau_2},r^{\tau_2})) \cap K  \Big) \leq r^{\max\left\{s\tau_1, s\tau_2 -(\tau_2-\tau_1)(s_0 -\alpha_{\nu})\right\}-\varepsilon} $$
\end{proposition}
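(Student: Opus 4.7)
The strategy is to reduce $\mathcal{H}^{s}_{\infty}(R \cap K)$, where $R = x + (-r^{\tau_1}, r^{\tau_1}) \times (-r^{\tau_2}, r^{\tau_2})$, to the content of a horizontal strip via the self-similarity of $K$, and then to apply Proposition \ref{MainPropoCont}. Concretely, I would pick integers $n_1 \leq n_2$ with $b^{-n_1} \asymp r^{\tau_1}$ and $b^{-n_2} \asymp r^{\tau_2}$, set $I_j = D_{b,n_j}(\pi_j(x))$ and sandwich $R$ between $I_1 \times I_2 \subset R$ and a union of $O(1)$ such $b$-adic rectangles covering $R$. Since comparable rectangles have comparable content up to bounded multiplicative constants, it suffices to estimate $\mathcal{H}^{s}_{\infty}(I_1 \times I_2 \cap K)$. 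Letting $Q = D_{b, n_1}(x) = I_1 \times I_1'$ (where $I_1'$ is the $b$-adic interval of generation $n_1$ containing $\pi_2(x)$), the canonical homothety $f_Q : [0,1]^2 \to Q$ maps $I_1 \times I_2$ to a horizontal strip $\mbox{Str}(\tilde{I}_2)$ with $\tilde{I}_2 = f_{I_1'}^{-1}(I_2)$ a $b$-adic interval of generation $m := n_2 - n_1$. Combining the identification $f_Q^{-1}(K \cap Q) = K$ modulo a $\mu_0$-null set (our standing assumption that $\mu_0(\partial D) = 0$) with the homothety scaling gives $\mathcal{H}^{s}_{\infty}(I_1 \times I_2 \cap K) = b^{-n_1 s} \mathcal{H}^{s}_{\infty}(\mbox{Str}(\tilde{I}_2) \cap K)$.

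Applying Proposition \ref{MainPropoCont} to the strip, and noting that $\pi_2(f_Q^{-1}(x)) = \sigma^{n_1}(\pi_2(x))$, one obtains, up to a polynomial factor in $\log(1/r)$,
\[
\mathcal{H}^{s}_{\infty}(R \cap K) \asymp b^{-n_1 s}\,\min_{0 \leq k \leq m} b^{-(s-s_0) k}\,\pi_2 \mu_0\bigl(D_{b, k}(\sigma^{n_1} \pi_2(x))\bigr).
\]
For the typicality of $x$: since $\pi_2 \nu$ is $\times b$ ergodic on $\mathbb{T}^1$, Proposition \ref{PropLocDimCenter} applied to $(\pi_2 \mu_0, \pi_2 \nu)$ shows that for $\pi_2 \nu$-a.e. $z$ (hence for $\nu$-a.e. $x$ with $z = \pi_2(x)$), the $b$-adic coding $(y_j)$ of $\pi_2(x)$ satisfies $\frac{1}{N}\sum_{j=1}^N (-\log p_{y_j}) \to \alpha_\nu \log b$. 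Telescoping $\sum_{n_1+1}^{n_1+k} = \sum_{1}^{n_1+k} - \sum_{1}^{n_1}$ then yields, for any $\varepsilon > 0$ and $n_1, n_1+k$ beyond an $x$-dependent threshold, $\pi_2 \mu_0(D_{b,k}(\sigma^{n_1} \pi_2(x))) = b^{-k \alpha_\nu + O((n_1 + k)\varepsilon)}$. A case analysis on the sign of $s - s_0 + \alpha_\nu$ then identifies the minimum: it is (asymptotically) attained at $k = 0$ when $s \leq s_0 - \alpha_\nu$, giving overall exponent $\tau_1 s$; and at $k = m$ when $s \geq s_0 - \alpha_\nu$, giving $\tau_2 s - (\tau_2 - \tau_1)(s_0 - \alpha_\nu)$. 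Combining the two cases produces the claimed $\max\{s\tau_1, s\tau_2 - (\tau_2-\tau_1)(s_0-\alpha_\nu)\}$.

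The main obstacle is converting the additive error $O((n_1 + k)\varepsilon)$ into the multiplicative tolerance $r^{\pm \varepsilon}$ of the statement. Since $n_1, m \asymp \log_b(1/r)$, the Birkhoff error converts to $r^{\pm C \varepsilon}$ with $C$ depending only on $\tau_1, \tau_2$; this is absorbed by choosing the intermediate $\varepsilon$ small enough from the start. A secondary subtlety is the small-$k$ regime of the minimum, where the Birkhoff estimate is not yet effective: there $b^{-(s-s_0)k} \pi_2 \mu_0(D_{b,k})$ is bounded above and below by constants, so it can only alter the minimum by a bounded factor, again harmless. Finally, the degenerate case in which $\pi_2 \nu$ is atomic (carried by a periodic orbit, by Lemma \ref{lemmadiffu}) gives $\alpha_\nu = 0$ and can be handled directly.
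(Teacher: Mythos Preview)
Your proposal is correct and follows essentially the same route as the paper: reduce the rectangle to a $b$-adic one, rescale by the generation-$n_1$ cube to a horizontal strip, apply Proposition~\ref{MainPropoCont}, and control the resulting $\pi_2\mu_0$-masses via the $\nu$-typical local dimension $\alpha_\nu$, with the same case split on the sign of $s-s_0+\alpha_\nu$ and the same treatment of the small-$k$ regime by a bounded constant.

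Two minor procedural differences are worth noting. First, the paper does not sandwich $R$ directly between $b$-adic rectangles; instead it invokes Corollary~\ref{Corodya} to replace the ball $B(\pi_2(x),r^{\tau_2})$ by a single $b$-adic interval $I_{\lfloor(1-\varepsilon_0)n\tau\rfloor}(\pi_2(x))$, and only then covers in the first coordinate by at most three cubes. This sidesteps the claim ``comparable rectangles have comparable content'', which in your version requires a word of justification (the neighbouring $I_2^{(j)}$ need not have $\pi_2\mu_0$-mass comparable to that of $I_2$; one must instead bound their total mass by $\pi_2\mu_0(B(\pi_2(x),r^{\tau_2}))$ and then use the ball version of Proposition~\ref{PropLocDimCenter}). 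Second, to control $\pi_2\mu_0(D_k(\sigma^{n_1}\pi_2(x)))$ the paper uses recurrence: by ergodicity of $\nu$ it finds times $n_i\in[(1-2\varepsilon_0)n,(1+2\varepsilon_0)n]$ at which $b^{n_i}x$ lies in a fixed good set $A_{n_1}$, whereas you telescope the Birkhoff sums $\sum_{n_1+1}^{n_1+k}=\sum_1^{n_1+k}-\sum_1^{n_1}$. Your telescoping is arguably more direct and gives the same $r^{\pm C\varepsilon}$ error; the paper's recurrence argument has the slight advantage that the good-set threshold is fixed independently of $n$, which is why their small-$k$ constant $\beta_{n_1}$ depends only on that threshold.
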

\begin{proof}
Fix $\varepsilon_0 >0.$ Recall that, since $\pi_2 \nu$ is $\times b$ ergodic, either $\nu$ is a periodic measure (hence is carried by finitely many atoms) or $\pi_2 \nu$ is diffuse, which case, by Corollary \ref{Corodya}, there exists $r_1>0$ small enough so that for every $r\leq r_1,$ one has  $$ D_{\lfloor \frac{-\tau_1\log r}{\log b}\rfloor}(\pi_2 (x))\subset B\Big(\pi_2 (x) ,r^{\tau_1}\Big)\subset  D_{\lfloor \frac{-\tau_1(1-\varepsilon_0)\log r}{\log b}\rfloor}(\pi_2 (x)).$$
Thus, writing $\tau =\frac{\tau_2}{\tau_1},$ and $$R_{n,\tau}(x)=(\pi_1(x)-b^{n},\pi_1(x)+b^{n})\times I_{\lfloor n\tau \rfloor }(\pi_2(x)),$$ it is enough to show that for every large enough $n$, one has $$b^{-n \max\left\{s, s\tau -(\tau-1)(s_0 -\alpha_{\nu})\right\}+\varepsilon} \leq \mathcal{H}^{s}_{\infty}\Big(R_{n,\tau}(x) \cap K  \Big) \leq b^{-n\max\left\{s, s\tau -(\tau-1)(s_0 -\alpha_{\nu})\right\}-\varepsilon}.$$
Notice that $R_{n,\tau}(x)$ can be covered by at most $3$, from left to right, say $D_1,D_2:=D_{n}(x),D_3$ $b$-adic cubes of generation $n$. Writing $f_{D_1},f_{D_2}, f_{D_3}$ the corresponding contractions, one has $$f_{D_1}^{-1}(R_{n,\tau}(x)),f_{D_3}^{-1}(R_{n,\tau}(x))\subset f_{D_2}^{-1}(R_{n,\tau}(x)).$$
This yields that $$\mathcal{H}^{s}_{\infty}\Big(R_{n,\tau}(x)\cap K \cap D_{n}(x)\Big)\leq \mathcal{H}^{s}_{\infty}\Big(R_{n,\tau}(x)\Big)\leq 3 \mathcal{H}^{s}_{\infty}\Big(R_{n,\tau}(x)\cap K \cap D_{n}(x)\Big).$$
Also, $$R_{n,\tau}(x) \cap D_n(x)=I_{n}(\pi_1 (x))\times I_{\lfloor n\tau\rfloor}(\pi_2(x)).$$
By Proposition \ref{PropLocDimCenter}, there exists $n_1 \in\mathbb{N}$ such that, setting $$A_{n_1}=\left\{y \in [0,1]^2: \ \forall k\geq n_1 \ b^{-k(\alpha_{\nu}+\varepsilon)}\leq \pi_2 \mu_0 (I_{k}(\pi_2(y)))\leq b^{-k(\alpha_{\nu}-\varepsilon)}\right\} $$ 
satisfies that $$\nu(A_{n_1})\geq 1-\varepsilon_0.$$
By ergodicity, for $\nu$-almost every $x$, there exists $n_3$ large enough so that for every $k\geq n_3,$ there exists $(1-2\varepsilon_0)k\leq k_1 \leq k \leq k_2 \leq (1+2\varepsilon_0)k$ for which $b^{k_i}x \in A_{n_1},$ for $i=1,2.$ Thus provided that $n$ is large enough, fix  $(1-2\varepsilon_0)n\leq n_1 \leq n \leq n_2 \leq (1+2\varepsilon_0)n$ and remark that $$I_{n_2}(\pi_1(x))\times I_{\lfloor n\tau \rfloor}(\pi_2(x))\subset I_{n}(\pi_1(x))\times I_{\lfloor n\tau \rfloor}(\pi_2(x)) \subset I_{n_1}(\pi_1(x))\times I_{\lfloor n\tau \rfloor}(\pi_2(x)).$$
Since $f_{D_{n_1}(x)},f_{D_{n_2}(x)}$ are homothetic, we have $$\mathcal{H}^{s}_{\infty}\Big(f_{D_{n_i}(x)}^{-1}\Big( I_{n_i}(\pi_1(x))\times I_{\lfloor n\tau \rfloor}(\pi_2(x))\cap K\Big)\Big)=\vert D_i \vert^{-s}\mathcal{H}^{s}_{\infty}\Big( I_{n_i}(\pi_1(x))\times I_{\lfloor n\tau \rfloor}(\pi_2(x))\cap K\Big).$$
Moreover, remark that 
$$f_{D_{n_i}(x)}^{-1}\Big( I_{n_i}(\pi_1(x))\times I_{\lfloor n\tau \rfloor}(\pi_2(x))\Big)=[0,1]\times I_{\lfloor n\tau \rfloor -n_i}(\pi_2(b^{n_i}(x))).$$
Applying Proposition \ref{MainPropoCont} yields $$\frac{C}{\log(n)}\leq \frac{ \mathcal{H}^{s}_{\infty}\Big(f_{D_{n_i}(x)}^{-1}\Big( I_{n_i}(\pi_1(x))\times I_{\lfloor n\tau \rfloor}(\pi_2(x))\Big)\cap K\Big)}{ \min_{0\leq \gamma \leq 1}b^{-(s-s_0)\lfloor \gamma( \lfloor n\tau \rfloor -n_i)\rfloor}\times \pi_2\mu_0(I_{\lfloor \gamma(\lfloor n\tau \rfloor -n_i) \rfloor}(\pi_2(b^{n_i}(x)))) } \leq 1.$$
Noticing that $(\tau-1 -2\varepsilon_0)n\leq n\tau -n_i \leq (\tau-1 +2\varepsilon_0)n$, one has 
$$\begin{cases}\frac{\min_{0\leq \gamma \leq 1}b^{-(s-s_0)\lfloor \gamma( \lfloor n\tau \rfloor -n_i)\rfloor}\times \pi_2\mu_0(I_{\lfloor \gamma(\lfloor n\tau \rfloor -n_i) \rfloor}(\pi_2(b^{n_i}(x))))}{ \min_{k \geq (\tau-1 -2\varepsilon_0)n}b^{-(s-s_0)k}\times \pi_2\mu_0(D_{k}(\pi_2(b^{n_i}(x))))} \leq 1\\  \frac{\min_{0\leq \gamma \leq 1}b^{-(s-s_0)\lfloor \gamma( \lfloor n\tau \rfloor -n_i)\rfloor}\times \pi_2\mu_0(I_{\lfloor \gamma(\lfloor n\tau \rfloor -n_i) \rfloor}(\pi_2(b^{n_i}(x))))}{ \min_{k \geq (\tau-1 +2\varepsilon_0)n}b^{-(s-s_0)k}\times \pi_2\mu_0(I_{k}(\pi_2(b^{n_i}(x))))} \geq 1.\end{cases}  $$
This yields that, for some $\kappa >0,$ 
$$ b^{-\kappa \varepsilon_0 n}\leq\frac{\min_{0\leq \gamma \leq 1}b^{-(s-s_0)\lfloor \gamma( \lfloor n\tau \rfloor -n_i)\rfloor}\times \pi_2\mu_0(D_{\lfloor \gamma(\lfloor n\tau \rfloor -n_i) \rfloor}(\pi_2(b^{n_i}(x))))}{ \min_{k \geq (\tau-1 )n}b^{-(s-s_0)k}\times \pi_2\mu_0(I_{k}(\pi_2(b^{n_i}(x))))}\leq b^{\kappa \varepsilon_0 n}.$$
There exists a constant $\beta_{n_1}$, depending only on $n_1$, such that such that  $$\beta_{n_1}\leq\min_{k \geq n_1}b^{-(s-s_0)k}\times \pi_2\mu_0(I_{k}(\pi_2(b^{n_i}(x)))) \leq 1$$
and, since $b^{n_i}(x)\in A_{n_1},$ for every $ (\tau-1 )n \leq k \leq n_1,$ one has $$b^{-k(\alpha_{\nu}+\varepsilon_0)}\leq\pi_2\mu_0(I_{k}(\pi_2(b^{n_i}(x))))\leq b^{-k(\alpha_{\nu}-\varepsilon_0)}.$$
This yields 
\begin{align*}
 \min_{ (\tau-1 )n \leq k \leq n_1} b^{-k(s-s_0 +\alpha_{\nu}+\varepsilon_0)}
&\leq\min_{ (\tau-1 )n \leq k \leq n_1}b^{-(s-s_0)k}\times \pi_2\mu_0(I_{k}(\pi_2(b^{n_i}(x))))  \\
 \min_{ (\tau-1 )n \leq k \leq n_1} b^{-k(s-s_0 +\alpha_{\nu}-\varepsilon_0)}&\geq \min_{ (\tau-1 )n \leq k \leq n_1}b^{-(s-s_0)k}\times \pi_2\mu_0(I_{k}(\pi_2(b^{n_i}(x)))).
\end{align*}
Hence, for every $s\leq s_0 -\alpha_{\nu}-\varepsilon_0$ or  $s\geq s_0 -\alpha_{\nu}-\varepsilon_0$ since, $k\mapsto k(s-s_0 +\alpha_{\nu}-\varepsilon_0)$ and $k\mapsto k(s-s_0 +\alpha_{\nu}+\varepsilon_0)$ are monotonic the infimimum is obtained for $k=\lfloor (\tau-1 )n \rfloor +1$ or $k=n_1$ and there exists a constant $\theta_{n_1}$ such that

\begin{align*}
 \min\left\{\beta_{n_1},b^{-(\tau-1)n(s-s_0 +\alpha_{\nu}+\varepsilon_0)}\right\}
&\leq\min_{ (\tau-1 )n \leq k \leq n_1}b^{-(s-s_0)k}\times \pi_2\mu_0(I_{k}(\pi_2(b^{n_i}(x))))  \\
 \min_{ (\tau-1 )n \leq k \leq n_1}b^{-(s-s_0)k}\times \pi_2\mu_0(I_{k}(\pi_2(b^{n_i}(x)))) &\leq \min\left\{\beta_{n_1},b^{-(\tau-1)n(s-s_0 +\alpha_{\nu}-\varepsilon_0)}\right\}.
\end{align*}
Finally, since $s\mapsto \mathcal{H}^{s}_{\infty}(\cdot)$ is non increasing, for every $s\leq s_0,$  (in particular for $ s_0 -\alpha_{\nu}-\varepsilon_0\leq s\leq s_0 -\alpha_{\nu}+\varepsilon_0$), there exists $\omega_{n_1}>0 $ such that one has 

\begin{align*}
 \frac{C}{\log n}\min\left\{\omega_{n_1},b^{-(\tau-1)n(s-s_0 +\alpha_{\nu}+3\varepsilon_0)}\right\}&\leq\mathcal{H}^{s}_{\infty}\Big(f_{D_{n_i}(x)}^{-1}\Big( I_{n_i}(\pi_1(x))\times I_{\lfloor n\tau \rfloor}(\pi_2(x))\Big)\cap K\Big) \\  \min\left\{\omega_{n_1},b^{-(\tau-1)n(s-s_0 +\alpha_{\nu}-3\varepsilon_0)}\right\}&\geq \mathcal{H}^{s}_{\infty}\Big(f_{D_{n_i}(x)}^{-1}\Big( I_{n_i}(\pi_1(x))\times I_{\lfloor n\tau \rfloor}(\pi_2(x))\Big)\cap K\Big), 
\end{align*}
 which implies that 

\begin{align*}
\frac{C}{\log n}\min\left\{b^{-(1+\varepsilon_0)n s}\omega_{n_1},b^{-(\tau-1)n(s-s_0 +\alpha_{\nu}+3\varepsilon_0)-(1+\varepsilon_0)ns}\right\}&\leq\mathcal{H}^{s}_{\infty}\Big( I_{n_i}(\pi_1(x))\times I_{\lfloor n\tau \rfloor}(\pi_2(x))\cap K\Big)\\ \min\left\{b^{-(1-\varepsilon_0)n s}\omega_{n_1},b^{-(\tau-1)n(s-s_0 +\alpha_{\nu}+3\varepsilon_0)-(1-\varepsilon_0)ns}\right\}&\geq \mathcal{H}^{s}_{\infty}\Big( I_{n_i}(\pi_1(x))\times I_{\lfloor n\tau \rfloor}(\pi_2(x))\cap K\Big).
\end{align*}
Recalling the inclusion and since $\log(n)=b^{o(n)}$, provided that $n$ is large enough, one gets 

\begin{align*}
\min\left\{b^{-(1+4\varepsilon_0)n s},b^{-(\tau-1)n(s-s_0 +\alpha_{\nu}+4\varepsilon_0)-(1+4\varepsilon_0)ns}\right\}&\leq \mathcal{H}^{s}_{\infty}(R_{n,\tau}(x)\cap K) \\  \min\left\{b^{-(1-4\varepsilon_0)n s},b^{-(\tau-1)n(s-s_0 +\alpha_{\nu}-4\varepsilon_0)-(1-4\varepsilon_0)ns}\right\}&\geq \mathcal{H}^{s}_{\infty}(R_{n,\tau}(x)\cap K).
\end{align*}
Moreover \begin{align*}
\min\left\{b^{-(1+4\varepsilon_0)n s},b^{-(\tau-1)n(s-s_0 +\alpha_{\nu}+4\varepsilon_0)-(1+4\varepsilon_0)ns}\right\}&\geq b^{-n \max\left\{s,s\tau-(\tau-1)(s_0 -\alpha)\right\} +(4s_0 +4\tau)\varepsilon_0 } \\
\min\left\{b^{-(1-4\varepsilon_0)n s},b^{-(\tau-1)n(s-s_0 +\alpha_{\nu}-4\varepsilon_0)-(1-4\varepsilon_0)ns}\right\}&\leq b^{-n \max\left\{s,s\tau-(\tau-1)(s_0 -\alpha)\right\} -(4s_0 +4\tau)\varepsilon_0 }.
\end{align*}

One concludes by taking $\varepsilon_0$ so small that $(4s_0 +4\tau)\varepsilon_0 \leq \varepsilon.$

\end{proof}

\section{Proof of Theorem \ref{ThmTree} and Theorem \ref{ThmRand}}
\subsection{Proof of Theorem \ref{ThmTree}}
In this section again, given $x\in [0,1]$ and $n\in\mathbb{N}$, we will write $I_n(x)$ the $b$-adic interval of generation $n$ containing $x$ and given $y\in [0,1]^2,$ $D_n(y)$ still denotes the $b$-adic cube of generation $n$ containing $y$. We fix $S$ , $\mu$, $1\leq \tau_1 <\tau_2$  as in Theorem \ref{ThmTree} and write $\tau=\frac{\tau_2}{\tau_1}$.

\bigskip

We start by dealing with the case where $\mu_0$ is supported on an horizontal or vertical line. 

Assume that $\mu_0$ is supported on an horizontal line. To prove that Theorem \ref{ThmTree} provides the correct estimate, one simply needs to prove that one recovers the same results as the case of of balls i.e. (see \cite{ED4}) that 
$$\dim_H V_{\tau_1,\tau_2}(x)\geq \frac{\dim_H \mu}{\tau_1}.$$

In this case $ \pi_2 \mu_0 $ is an atom so that for any $\nu $ ergodic,  $\alpha_{\nu}=0.$ Moreover
\begin{align*}
\frac{\dim_H \mu}{\tau_1} &\leq \frac{\dim_H \mu +(\tau_2 -\tau_1) s_0}{\tau_2} \\
\Leftrightarrow \frac{\dim_H \mu}{\tau_1}& \leq \frac{\dim_H \mu }{ \tau \tau_1}+(1-\frac{1}{\tau})s_0 \\
\Leftrightarrow\frac{\dim_H \mu}{\tau_1}&\leq s_0 
\end{align*}
which is always satisfied as $\tau_1 \geq \frac{1}{s_0}$ and $\dim_H \mu \leq 1.$ Thus one recovers the correct estimate in that case.

The case where $\mu_0$ is carried by a vertical line is straightforward as $\dim_H \pi \mu_0 =s_0$ and $\frac{1}{\tau_2}\leq \frac{1}{\tau_1}.$

\bigskip

We now assume that $\mu_0$ is not carried by an horizontal or vertical line and we use the notations of Subsection \ref{SecContEsti}. Before proving Theorem \ref{ThmTree}, we establish some estimates.  Let us fix $\nu$ an ergodic measure with respect to $S$ and $\alpha_{\nu}$ such that for $\nu$-almost every $x$, $$\dim(\pi_2(x),\pi_2 \mu_0)=\alpha_{\nu} .$$
In what follows, given a word $\underline{i}=(i_1,...,i_n)\in\left\{1,...,m\right\}^n,$ we will write $$\begin{cases}R_{\underline{i}}(x)=f_{\underline{i}}(x)+(-\vert f_{\underline{i}}(K)\vert^{\tau_1},\vert f_{\underline{i}}(K)\vert^{\tau_1})\times (-\vert f_{\underline{i}}(K)\vert^{\tau_2},\vert f_{\underline{i}}(K)\vert^{\tau_2})\\
D_{\underline{i}}=f_{\underline{i}}\Big([0,1)^2).\end{cases} $$

Let us fix $n\in\mathbb{N}$ and $(i_1,...,i_n)\in\left\{1,...,m\right\}^n.$ Notice that, for any $k\leq n$,  $$f_{(i_1,...,i_{n+k})}^{-1}(R_{\underline{i}})(x)=b^{k}(x)+\Big(-b^{-n(\tau_1 -1 )+k}, b^{-n(\tau_1 -1 )+k}\Big)\times \Big(-b^{-n(\tau_2 -1 )+k}, b^{-n(\tau_2 -1 )+k}\Big).$$

Fix $\varepsilon,\varepsilon_0>0,$ By Proposition \ref{Propoascontent} applied simultaneously  with $\tau_1 =\tau_1 -1 ,\tau_2 =\tau_2 -1$ and $\tau_1 =\tau_1 -1-\varepsilon_0, $ $\tau_2=\tau_2-1-\varepsilon_0$, there exists $\rho>0$ such that, writing $$A_{\rho}=\left\{x \in K :\forall r\leq \rho, \begin{cases}r^{\varepsilon} \leq \frac{\mathcal{H}^{s}_{\infty}\Big((x+(-r^{\tau_1 -1},r^{\tau_1 -1})\times(-r^{\tau_2 -1},r^{\tau_2 -1})) \cap K  \Big)}{r^{\max\left\{s(\tau_1-1), s(\tau_2-1) -(\tau_2-\tau_1)(s_0 -\alpha_{\nu})\right\}} } \\ r^{-\varepsilon} \geq \frac{\mathcal{H}^{s}_{\infty}\Big((x+(-r^{\tau_1 -1 -\varepsilon_0},r^{\tau_1 -1 -\varepsilon_0})\times(-r^{\tau_2 -1-\varepsilon_0},r^{\tau_2 -1-\varepsilon_0})) \cap K  \Big)}{r^{\max\left\{s(\tau_1-1), s(\tau_2-1) -(\tau_2-\tau_1)(s_0 -\alpha_{\nu})\right\}} } \end{cases}\right\}, $$
one has $\nu(A_{\rho})\geq 1-\varepsilon_0.$ This implies, by ergodicity, that that for $\nu$-almost every $x$, there exists $k \in\mathbb{N}$ so large that for every $b^{k}x \in A_{\rho}.$ 
\begin{align*}
&b^{k}x+(-b^{-n(\tau_1 -1)},b^{-n(\tau_1 -1)})\times(-b^{-n(\tau_2 -1)},b^{-n(\tau_2 -1)})\\
& \subset b^{k}x+\Big(-b^{-n(\tau_1 -1 )+k}, b^{-n(\tau_1 -1 )+k}\Big)\times \Big(-b^{-n(\tau_2 -1 )+k},b^{-n(\tau_2 -1 )+k}\Big)\\
&\text{ and } \\ &b^{k}x+(-b^{-n(\tau_1 -1)+k},b^{-n(\tau_1 -1)+k})\times(-b^{-n(\tau_2 -1)},b^{-n(\tau_2 -1)+k})\\
&\subset b^{k}x+(-b^{-n(\tau_1 -1 -\varepsilon_0)},b^{-n(\tau_1 -1 -\varepsilon_0)})\times(-b^{-n(\tau_2 -1-\varepsilon_0)},b^{-n(\tau_2 -1-\varepsilon_0)}).
\end{align*}
Since $$\mathcal{H}^{s}_{\infty}(f_{(i_1,...,i_{n+k})}^{-1}(R_{\underline{i}})\cap K)=b^{-(n+k)s}\times \mathcal{H}^{s}_{\infty}(R_{\underline{i}}\cap K) $$
and, for every large enough $n$, one has $$  b^{-n\varepsilon}\leq\frac{\mathcal{H}^{s}_{\infty}\Big(f_{(i_1,...,i_{n+k})}^{-1}(R_{\underline{i}})\cap K \Big)}{b^{-n\max\left\{s(\tau_1-1), s(\tau_2-1) -(\tau_2-\tau_1)(s_0 -\alpha_{\nu})\right\}} }\leq b^{n\varepsilon},$$
one gets that, provided that $n$ is large enough,
\begin{align*}
b^{-2\varepsilon n}\leq\frac{\mathcal{H}^{s}_{\infty}(R_{\underline{i}}\cap K)}{b^{-ns} \times b^{-n\max\left\{s(\tau_1-1), s(\tau_2-1) -(\tau_2-\tau_1)(s_0 -\alpha_{\nu})\right\}} }\leq b^{2n\varepsilon}
\end{align*}
so that

\begin{align*}
b^{-2\varepsilon n}\leq\frac{\mathcal{H}^{s}_{\infty}(R_{\underline{i}}\cap K)}{ b^{-n\max\left\{\tau_1, s\tau_2 -(\tau_2-\tau_1)(s_0 -\alpha_{\nu})\right\}} }\leq b^{2n\varepsilon}.
\end{align*}
 
We are now ready to prove Theorem \ref{ThmTree}. Let $$\Lambda_{\mu}\subset \bigcup_{k \geq 1}\left\{1,...,m\right\}^k $$ be a set of words such that there exists $z \in K$ for which $$ \mu\Big(\limsup_{\underline{i}\in \Lambda_{\mu}}B\Big(f_{i}(z),\vert f_{\underline{i}}(K)\vert\Big)\Big)=1 .$$
 Since for  any $x\in K,$ $$B\Big(f_{i}(z),\vert f_{\underline{i}}(K)\vert\Big)\Big)\subset B\Big(f_{i}(x),2\vert f_{\underline{i}}(K)\vert\Big)\Big),$$ 
one has $$\mu\Big(\limsup_{\underline{i}\in \Lambda_{\mu}}B\Big(f_{i}(x),2\vert f_{\underline{i}}(K)\vert\Big)\Big)=1.$$

In this, case (see \cite{ed2}), for any $\varepsilon>0$ there exists a subset of words $\widetilde{\Lambda}_{\mu}\subset \Lambda_{\mu}$ such that $$\begin{cases}\forall \underline{i}\in \widetilde{\Lambda}_{\mu}, \ \mu\Big(B\Big(f_{i}(x),2\vert f_{\underline{i}}(K)\vert\Big)\Big)\leq b^{-\vert \underline{i}\vert(\dim_H \mu-\varepsilon)} \\ \mu\Big(\limsup_{\underline{i}\in \widetilde{\Lambda}_{\mu}}B\Big(f_{i}(x),2\vert f_{\underline{i}}(K)\vert\Big)\Big)=1 \end{cases} $$

If $s_{\varepsilon}$ is solution to the equation $$\max\left\{s\tau_1, s\tau_2 -(\tau_2-\tau_1)(s_0 -\alpha_{\nu})\right\}+\varepsilon = \dim_H \mu -\varepsilon,$$
for every $\underline{i}\in\widetilde{\Lambda}_{\mu}$ of large enough generation,  $$\begin{cases}\mathcal{H}^{s_{\varepsilon}}_{\infty}(R_{\underline{i}}\cap K)\geq \mu\Big(B\Big(f_{i}(x),2\vert f_{\underline{i}}(K)\vert\Big)\Big) \\  \mu\Big(\limsup_{\underline{i}\in \widetilde{\Lambda}_{\mu}}B\Big(f_{i}(x),2\vert f_{\underline{i}}(K)\vert\Big)\Big)=1.
\end{cases} $$
Thus Theorem \ref{MTPss}, one gets $$\dim_H \limsup_{ \underline{i}\in\Lambda_{\mu}}R_{\underline{i}}\geq s_{\varepsilon}.$$
Letting $\varepsilon\to 0,$ yields 

 $$\dim_H \limsup_{ \underline{i}\in\Lambda_{\mu}}R_{\underline{i}}\geq s$$
where $s$ is solution to  $$\max\left\{s\tau_1, s\tau_2 -(\tau_2-\tau_1)(s_0 -\alpha_{\nu})\right\} = \dim_H \mu ,$$
i.e., $$s=\min\left\{\frac{\dim_H \mu}{\tau_1}, \frac{\dim_H \mu +(\tau_2 -\tau_1)(s_0 -\alpha_{\nu})}{\tau_2}\right\}.$$
Assume in addition that $$\lim_{\vert\underline{i}\vert \to +\infty} \frac{-\log \mu\Big(f_{\underline{i}}([0,1]^2))\Big)}{\vert \underline{i}\vert \log b} =  \dim_H \mu,$$
Then, for any $\underline{i}\in\Lambda_{\mu},$ and any $\varepsilon>0,$ there exists $N\in\mathbb{N}$ such that for every $n\geq N,$ every $\underline{i}\in \Lambda_{\mu}\cap \left\{1,...,m\right\}^n,$ writing $t_{\varepsilon}$ the solution to
$$\max\left\{s\tau_1, s\tau_2 -(\tau_2-\tau_1)(s_0 -\alpha_{\nu})\right\}-\varepsilon = \dim_H \mu +\varepsilon,$$

 one has $$\mathcal{H}^{t_{\varepsilon}}_{\infty}(R_{\underline{i}}(x)\cap K)\leq b^{-n\varepsilon}\mu\Big(f_{\underline{i}}([0,1]^d)\Big).$$
And 
\begin{align*}
\sum_{n\geq 1} b^{-n \varepsilon}\sum_{\underline{i}\in\left\{1,...,m\right\}^n}\mu\Big(f_{\underline{i}}([0,1]^d)\Big)=\sum_{n\geq 1} b^{-n \varepsilon}<+\infty.
\end{align*}
This yields in particular $$\sum_{\underline{i}\in\Lambda_{\mu}}\mathcal{H}^{t_{\varepsilon}}_{\infty}(R_{\underline{i}}(x)\cap K)<+\infty,$$
so that $\dim_H \limsup_{\underline{i}\in\Lambda_{\mu}} R_{\underline{i}}(x)\leq t_{\varepsilon}.$ 

Letting $\varepsilon \to 0,$ one gets $$\dim_H \limsup_{ \underline{i}\in\Lambda_{\mu}}R_{\underline{i}}\leq s,$$
which finishes the proof.


\subsection{Proof of Theorem \ref{ThmRand}}

Let us fix $m,b\in\mathbb{N}$, $S=\left\{f_1,...,f_m\right\}$ a two-dimensional base $b$ missing digit IFS, $\mu_0$ the corresponding Alfhors-regular measure and $s_0$ its dimension.

 One will only prove Theorem \ref{ThmRand} in the case where $(X_n)_{n\in\mathbb{N}}$ is an i.i.d. sequence of law $\mu_0.$ In Section \ref{SectionDyna}, we will explain how one recovers the result regards $\mu_0$-typical orbits by adapting a lemma established as\cite[Lemma 7.3]{Edergo}.
 
\medskip

First, we justify, as in the previous section that, in the case where $\mu_0$ is carried by an horizontal or vertical line, Theorem \ref{ThmRand} provides the same bound as in the case of random balls, thus is correct. In the case where $\mu_0$ is carried by an horizontal line, one has $\dim_H \pi_2 \mu_0 =0,$ so that one always as $\frac{1}{\tau_1}\leq s_0 -\dim_H \pi_2 \mu_0$ and $\dim_H W_{\tau_1, \tau_2}=\frac{1}{\tau_1}.$ 

\medskip

In the case where $\mu_0$ is carried by an horizontal line, $\dim_H \pi_2 \mu_0 =s_0, $ so that $\kappa_2 =s_0$ and $v_{\tau}(\kappa_2)=0$, which yields $$\dim_H W_{\tau_1,\tau_2}=\frac{1+(\tau_2-\tau_1)\times 0}{\tau_2}=\frac{1}{\tau_2},$$ 
and the conclusion of Theorem \ref{ThmRand} holds true in that case.

\medskip

We now focus on the case where $\mu_0$ is not carried bay an horizontal or vertical line and we use the same notations as in Section \ref{SecContEsti}.

 Given $\tau_1,\tau_2,r>0$ and $x=(x_1,x_2)\in \mathbb{R}^2$, let us write $$R_{\tau_1,\tau_2}(x,r)=x+(-r^{\tau_1},r^{\tau_1})\times (-r^{\tau_2},r^{\tau_2}) .$$

Given $\alpha\in \mbox{Spectr}(\pi_2 \mu_0),$ define $s_{\alpha}$ as the solution to  $$\max\left\{s,s\tau -(\tau-1)(s_0 -\alpha)\right\}=-(\tau-1)(\alpha_0-D_{\pi_{2}(\mu_0)}(\alpha)) +\frac{1}{\tau_1},$$
i.e. 
 $$s_{\alpha}=\min\left\{\frac{1-(\tau_2-\tau_1)(\alpha -D_{\pi_2 \mu_0}(\alpha))}{\tau_1},\frac{1+(\tau_2-\tau_1)(s_0 -2\alpha +D_{\pi_2 \mu_0}(\alpha))}{\tau_2}\right\}.$$
 
In the next sections, we show the following. 
 
\begin{theoreme}
\label{ThmRandbis}
Let $(X_n)_{n\in\mathbb{N}}$ be an i.i.d. sequence of common law $\mu_0$ and $\frac{1}{s_0}\leq\tau_1\leq \tau_2.$

Almost surely, $$\dim_H \limsup_{n\to +\infty}R_{\tau_1,\tau_2}(X_n,\frac{1}{n})=\max_{\alpha \in \mbox{Spectr}(\pi_2 \mu_0)}s_{\alpha}.$$
\end{theoreme}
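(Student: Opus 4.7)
The plan is to establish matching upper and lower bounds of $\max_{\alpha\in\mbox{Spectr}(\pi_2\mu_0)}s_{\alpha}$ for $\dim_H W_{\tau_1,\tau_2}$, writing $R_n:=R_{\tau_1,\tau_2}(X_n,1/n)$. The upper bound will be a first-moment Borel--Cantelli calculation built on the multifractal analysis of $\pi_2\mu_0$, while the lower bound is an application of the mass transference principle (Theorem~\ref{MTPss}) to a Gibbs-type auxiliary self-similar measure, carried out level by level across the spectrum.

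\textbf{Upper bound.} Fix $s>\max_\alpha s_\alpha$. The content of $R_n\cap K$ is governed by how $\pi_2X_n$ sits inside the $\pi_2\mu_0$ mass distribution at scale $b^{-m}$, where $m=\lfloor(\tau_2-\tau_1)\log n/\log b\rfloor$ is the effective $b$-adic generation of the strip obtained by rescaling $R_n$ inside the $b$-adic cube $D_{\lfloor\tau_1\log n/\log b\rfloor}(X_n)$. I would stratify over an $\varepsilon$-net of $\mbox{Spectr}(\pi_2\mu_0)$: the event $E_{n,\alpha}$ that $\pi_2\mu_0(I_m(\pi_2X_n))\in[b^{-m(\alpha+\varepsilon)},b^{-m(\alpha-\varepsilon)}]$ has probability $\leq n^{-(\tau_2-\tau_1)(\alpha-D_{\pi_2\mu_0}(\alpha))+O(\varepsilon)}$ by Theorem~\ref{Thmlargedev}, while Proposition~\ref{MainPropoCont} yields the content bound $\mathcal{H}^s_\infty(R_n\cap K)\leq n^{-\max\{s\tau_1,\,s\tau_2-(\tau_2-\tau_1)(s_0-\alpha)\}+O(\varepsilon)}$ on $E_{n,\alpha}$. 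Summing over $n$ and the net, the series $\sum_n\mathbb{E}[\mathcal{H}^s_\infty(R_n\cap K)]$ converges precisely because $s>s_\alpha$ for every $\alpha$ translates into $(\tau_2-\tau_1)(\alpha-D_{\pi_2\mu_0}(\alpha))+\max\{s\tau_1,\,s\tau_2-(\tau_2-\tau_1)(s_0-\alpha)\}>1$; Borel--Cantelli then gives $\mathcal{H}^s(W_{\tau_1,\tau_2})=0$ a.s.

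\textbf{Lower bound.} Fix $\alpha\in\mbox{Spectr}(\pi_2\mu_0)$ and $q\in\R$ with $\kappa_{q,\mathcal{A}}=\alpha$. I would introduce the self-similar measure $\mu_\alpha$ on $K$ with weights $p_{(i,j),q}:=p_{j,\mathcal{A},q}/\#\{i':(i',j)\in\mathcal{A}\}$: by construction, $\pi_2\mu_\alpha$ is the Gibbs measure of $\pi_2\mu_0$ at parameter $q$, and $\mu_\alpha$-a.e.\ $x$ satisfies $\lim_{r\to0^+}\log\pi_2\mu_0(B(\pi_2x,r))/\log r=\alpha$. Since $\mu_\alpha$ is then $\times b$ ergodic, Proposition~\ref{Propoascontent} provides, for $\mu_\alpha$-a.e.\ $x$, the sharp content lower bound $\mathcal{H}^s_\infty(R_{\tau_1,\tau_2}(x,r)\cap K)\geq r^{\max\{s\tau_1,\,s\tau_2-(\tau_2-\tau_1)(s_0-\alpha)\}+\varepsilon}$. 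Take $\rho_n:=n^{-1/s_0}$: by Alfhors regularity of $\mu_0$, $\mathbb{E}[\mu_\alpha(B(X_n,\rho_n))]\gtrsim 1/n$, so a divergence Borel--Cantelli argument in the spirit of \cite{JJMS} yields that almost surely $\mu_\alpha$-a.e.\ $y$ lies in $B(X_n,\rho_n)$ infinitely often. Since $\tau_1\geq1/s_0$, $R_n\subset B(X_n,\rho_n)$, and Theorem~\ref{MTPss} applied with $\mu=\mu_\alpha$, $B_n=B(X_n,\rho_n)$, $U_n=R_n$ reduces the bound $\dim_H W_{\tau_1,\tau_2}\geq s$ to $\mathcal{H}^s_\infty(R_n\cap K)\geq\mu_\alpha(B_n)$. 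Combined with the upper mass estimate $\mu_\alpha(B_n)\leq\rho_n^{\dim\mu_\alpha-\varepsilon}$ (exact dimensionality, \cite{FH}), this holds for $s<s_\alpha-O(\varepsilon)$; letting $\varepsilon\to0$ and optimizing over a countable dense subset of $\mbox{Spectr}(\pi_2\mu_0)$ gives the matching lower bound.

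\textbf{Main obstacle.} The trickiest step is transferring the content lower bound from $\mu_\alpha$-typical points to the random $\mu_0$-distributed centers $X_n$. Knowing only that $X_n\in B(y,\rho_n)$ for a $\mu_\alpha$-typical $y$ pins down the $\pi_2\mu_0$-mass profile near $\pi_2X_n$ at scales $\gtrsim\rho_n=n^{-1/s_0}$, whereas the content calculation also requires control at the smaller relative scale $n^{-(\tau_2-\tau_1)}$ governing the strip. When $\tau_2-\tau_1\leq1/s_0$ this is automatic; otherwise one must compensate, for instance by replacing $B_n$ with the $b$-adic cube of side $\sim n^{-(\tau_2-\tau_1)}$ containing $X_n$ and running a finer divergence Borel--Cantelli to cover $\mu_\alpha$-a.e.\ $y$ at that scale, or by a further multifractal refinement of $\mu_\alpha$ matched to the random scale. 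A secondary technical point is that the large deviation bound of Theorem~\ref{Thmlargedev} is sharp only inside $\mbox{Spectr}(\pi_2\mu_0)$; exterior values of $\alpha$ contribute superpolynomially small probability and are harmlessly discarded from both sums.
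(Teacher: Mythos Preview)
Your upper bound is essentially the paper's argument: stratify $\pi_2 X_n$ by its $\pi_2\mu_0$-local dimension at the relevant scale via Theorem~\ref{Thmlargedev}, bound the content on each stratum with Proposition~\ref{MainPropoCont}, and sum. The paper additionally splits off a ``boundary'' case (when $X_k$ falls within $b^{-(\tau-1)n}$ of the edge of its $b$-adic cube), but this is a technicality and your outline captures the substance.

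The lower bound, however, has a genuine gap that your ``Main obstacle'' paragraph does not resolve. The problem is not only the transfer of the content estimate from $\mu_\alpha$-typical to $\mu_0$-distributed centres; it is that applying Theorem~\ref{MTPss} with the auxiliary measure $\mu_\alpha$ and the radius $\rho_n=n^{-1/s_0}$ simply does not produce the exponent $s_\alpha$. With your choices, the condition $\mathcal H^s_\infty(R_n\cap K)\ge\mu_\alpha(B_n)$ (even granting both sides their ideal scaling) reads
\[
\max\{s\tau_1,\,s\tau_2-(\tau_2-\tau_1)(s_0-\alpha)\}\ \le\ \dim_H\mu_\alpha/s_0\ =\ \big(s_0-\alpha+D_{\pi_2\mu_0}(\alpha)\big)/s_0,
\]
whereas $s_\alpha$ is defined by the right-hand side $1-(\tau_2-\tau_1)(\alpha-D_{\pi_2\mu_0}(\alpha))$. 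These agree only when $\tau_2-\tau_1=1/s_0$ or $\alpha=\dim_H\pi_2\mu_0$. Moreover Theorem~\ref{MTPss} requires $s<\dim_H\mu_\alpha=s_0-\alpha+D_{\pi_2\mu_0}(\alpha)$, which can fail for $s$ near $s_\alpha$. Your suggested fix (replacing $B_n$ by a cube of side $n^{-(\tau_2-\tau_1)}$) makes the divergence Borel--Cantelli fail precisely in the regime $\tau_2-\tau_1>1/s_0$ where you need it.

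The paper avoids all of this by applying Theorem~\ref{MTPss} with $\mu_0$ itself, not with $\mu_\alpha$. The auxiliary measure $\nu_\alpha$ lives only on the one-dimensional projection and is used to build, for each $b$-adic cube $D_k(x)$, an explicit ``good'' set $A_k\subset D_k(x)$ of $\mu_0$-mass $\gtrsim b^{-n/((1+\varepsilon)\tau_1)}$ such that any $X_p\in A_k$ has the correct $\pi_2\mu_0$-profile at the relative scale $b^{-(\tau-1)n}$. One then takes the enlarged balls $B_p=B(X_p,p^{-\tau_1/((1+\varepsilon)\delta_\alpha)})$ with the $\alpha$-dependent radius $\delta_\alpha=s_0/\big(\tfrac{1}{\tau_1}-(\tau-1)(\alpha-D_{\pi_2\mu_0}(\alpha))\big)$; these are calibrated so that $\mu_0(B_p)$ matches the content of $R_p\cap K$ exactly at $s=s_\alpha$. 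Independence plus Fubini gives $\mu_0(\limsup_{p:X_p\in A_k}B_p)=1$ almost surely, and Theorem~\ref{MTPss} applied to $\mu_0$ delivers $\dim_H W_{\tau_1,\tau_2}\ge s_\alpha$ directly. The point is that by pre-filtering the random centres rather than changing the reference measure, one keeps the Ahlfors regularity of $\mu_0$ available for both the hitting probability and the mass transference step.
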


We justify below  that Theorem \ref{ThmRandbis} implies Theorem \ref{ThmRand}. 

Given $\alpha \in\mbox{ Spectr }(\pi_2 \mu_0),$ notice that, writing again $\tau =\frac{\tau_2}{\tau_1},$ and assuming $\tau >1$,
\begin{align*}
&\frac{1-(\tau_2-\tau_1)(\alpha -D_{\pi_2 \mu_0}(\alpha))}{\tau_1}\leq \frac{1+(\tau_2-\tau_1)(s_0 -2\alpha +D_{\pi_2 \mu_0}(\alpha))}{\tau_2} \\
&\Leftrightarrow  \frac{1}{\tau_1}-(\tau-1)(\alpha -D_{\pi_2 \mu_0}(\alpha))\leq \frac{\frac{1}{\tau_1}+(\tau-1)(s_0 -2\alpha +D_{\pi_2 \mu_0}(\alpha))}{\tau} \\ 
&\Leftrightarrow (\tau-1)\frac{1}{\tau_1}-\tau(\tau-1)(\alpha -D_{\pi_2 \mu_0}(\alpha))\leq (\tau-1)(s_0 -2\alpha +D_{\pi_2 \mu_0}(\alpha))\\
&\Leftrightarrow \frac{1}{\tau_1}\leq s_0 +(\tau-2)\alpha -(\tau-1)D_{\pi_2 \mu_0}(\alpha)\\
&\Leftrightarrow \frac{1}{\tau_1}\leq v_{\tau}(\alpha).
\end{align*}

We consider $3$ cases separately.

\bigskip

\textbf{Case 1: $\frac{1}{\tau} \leq v_{\tau}(\dim_H \pi_2(\mu_0))=s_0 -\dim_H \pi_2 \mu_0.$}

\bigskip

Notice that $\alpha \mapsto v_{\tau}(\alpha)$ is convex, since $\alpha \mapsto D_{\pi_2 \mu_0}(\alpha)$ is concave and its minimum is attained in $\alpha$ such that $$ D'_{\pi_2 \mu_0}(\alpha) =\frac{\tau-2}{\tau-1} \leq 1,$$
i.e. $\alpha =\kappa_{\frac{\tau-2}{\tau-1} }.$
As $\alpha \mapsto D'_{\pi_2 \mu_0}(\alpha)$ is non increasing and $D'_{\pi_2 \mu_0}(\dim_H \pi_2 \mu_0)=1,$ one has $$\kappa_2 \leq \dim_H \pi_2 \mu_0 \leq \kappa_{\frac{\tau-2}{\tau-1} }.$$

Moreover $\alpha \mapsto \frac{1-(\tau_2-\tau_1)(\alpha -D_{\pi_2 \mu_0}(\alpha))}{\tau_1}$ is non decreasing on $(-\infty,\dim_H \pi_2 \mu_0],$ non increasing on $[\mu_0,+\infty)$ and reaches its maximum $M_1$ on $\dim_H \pi_2 \mu_0$ with $M_1=\frac{1}{\tau_1}.$

Similarly, $\alpha \mapsto \frac{1+(\tau_2-\tau_1)(s_0 -2\alpha +D_{\pi_2 \mu_0}(\alpha))}{\tau_2}$ is concave, non decreasing on $(-\infty,\kappa_2],$ non increasing on $[\kappa_2,+\infty)$ and reaches its maximum $M_2$ on $\kappa_2$ with  $$M_2=  \frac{1+(\tau_2-\tau_1)(s_0 -2\kappa_2 +D_{\pi_2 \mu_0}(\kappa_2))}{\tau_2}.$$
This yields $v_{\tau}(\alpha)\geq \frac{1}{\tau_1}$ for any $\alpha \leq \dim_H \pi_2 \mu_0,$ so that $$\sup_{\alpha \in \mbox{ Spectr}(\pi_2 \mu_0)}s_{\alpha}=M_1 =\frac{1}{\tau_1}.$$

\bigskip

\textbf{Case 2:  $v_{\tau}(\dim_H \pi_2(\mu_0))\leq \frac{1}{\tau_1}\leq v_{\tau}(\kappa_2).$}

\bigskip

Call $\beta_{\tau_1,\tau_2}$ the solution to $$v_{\tau}(\beta_{\tau_1,\tau_2})=\frac{1}{\tau_1}.$$
The mapping $\alpha\mapsto \frac{1+(\tau_2-\tau_1)(s_0 -2\alpha +D_{\pi_2 \mu_0}(\alpha))}{\tau_2}$ is non increasing on $[\kappa_2,\beta_{\tau_1,\tau_2}]$ while $\alpha \mapsto  \frac{1-(\tau_2-\tau_1)(\alpha -D_{\pi_2 \mu_0}(\alpha))}{\tau_1}$ is non decreasing on $[\beta_{\tau_1,\tau_2},+\infty).$ We conclude in that case that 
\begin{align*}
\sup_{\alpha \in \mbox{ Spectr}(\pi_2 \mu_0)}s_{\alpha}&=\frac{1-(\tau_2-\tau_1)(\beta_{\tau_1,\tau_2} -D_{\pi_2 \mu_0}(\beta_{\tau_1,\tau_2}))}{\tau_1}\\
&= \frac{1+(\tau_2-\tau_1)(s_0 -2\beta_{\tau_1,\tau_2} +D_{\pi_2 \mu_0}(\beta_{\tau_1,\tau_2}))}{\tau_2}
\end{align*}

\bigskip

\textbf{Case 3: $\frac{1}{\tau_1}\geq v_{\tau}(\kappa_2 ).$ }

\bigskip

As both $\alpha\mapsto \frac{1+(\tau_2-\tau_1)(s_0 -2\alpha +D_{\pi_2 \mu_0}(\alpha))}{\tau_2}$  and  $\alpha \mapsto \frac{1+(\tau_2-\tau_1)(s_0 -2\alpha +D_{\pi_2 \mu_0}(\alpha))}{\tau_2}$ are non decreasing on $(-\infty,\kappa_2)$ and $\alpha\mapsto \frac{1+(\tau_2-\tau_1)(s_0 -2\alpha +D_{\pi_2 \mu_0}(\alpha))}{\tau_2}$ is non increasing on $[\kappa_2,+\infty),$ $s_{\alpha}$ reaches its maximum on $\kappa_2$ so that 

\begin{align*}
\sup_{\alpha \in \mbox{ Spectr}(\pi_2 \mu_0)}s_{\alpha}&= \frac{1+(\tau_2-\tau_1)(s_0 -2\kappa_2 +D_{\pi_2 \mu_0}(\kappa_2))}{\tau_2}
\end{align*}

\subsubsection{Proof of the upper-bound}
Write $$R_k = R_{\tau_1,\tau_2}(X_k,\frac{1}{k})$$

Our strategy to establish that, almost surely, one has $$\dim_H \limsup_{k\to +\infty}R_k \leq \sup_{\alpha \in \mbox{Spectr}(\pi_2 \mu_0)}s_{\alpha}$$
is to show that, for every $\varepsilon>0,$ writing $s_{\varepsilon}=\sup_{\alpha \in \mbox{Spectr}(\pi_2 \mu_0)}s_{\alpha}+\varepsilon,$ one has $$\sum_{k\geq 1} \mathcal{H}^{s_{\varepsilon}}_{\infty}\Big(R_k \cap K\Big)<+\infty.$$

Let us fix $\varepsilon>0.$ We recall that $\alpha\mapsto D_{\pi_2(\mu)}(\alpha)$ is continuous. Thus, by taking $\varepsilon$ smaller if one must, one may assume that $\vert \alpha-\alpha' \vert \leq \varepsilon \Rightarrow \vert D_{\pi_2(\mu)}(\alpha)-D_{\pi_2(\mu)}(\alpha')\vert \leq \varepsilon.$
Set  $$\begin{cases}\alpha_{\min}=\inf\left\{\alpha : \ D_{\pi_2(\mu)}(\alpha)>0 \right\} \\ \alpha_{\max}=\sup\left\{\alpha : \ D_{\pi_2(\mu)}(\alpha)>0 \right\}.\end{cases} $$
We recall that, since $\pi_2(\mu)$ is a non atomic self-similar measure, it is known that $0<\alpha_{\min}\leq \alpha_{\max}<+\infty.$ For $0\leq k\leq \lfloor \frac{\alpha_{\max}-\alpha_{\min}}{\varepsilon} \rfloor$, set $$ I_k =[\alpha_{\min}+k\varepsilon,\alpha_{\min}+(k+1)\varepsilon[.$$
We also write, for $k=0,...,\lfloor \frac{\alpha_{\max}-\alpha_{\min}}{\varepsilon} \rfloor,$ $\alpha_k =\alpha_{\min}+k\varepsilon$ and $\alpha_{\lfloor \frac{\alpha_{\max}-\alpha_{\min}}{\varepsilon} \rfloor+1}=\alpha_{\max}.$ 

Write also $\theta_0=-\frac{\log(p_0)}{\log b}$ and $\theta_1 =-\frac{\log(p_{b-1})}{\log b}.$ By changing slightly $\varepsilon$ if one must, we may assume that $\theta_{0},\theta_1 \notin \left\{\alpha_i\right\}_{1\leq k\leq \lfloor \frac{\alpha_{\max}-\alpha_{\min}}{\varepsilon} \rfloor}.$

The following argument follows from Theorem \ref{Thmlargedev} applied to $\pi_2(\mu_0)$ (which satisfies the open set condition).
\begin{lemme}
\label{LemmaMultifracForma}
Let $\alpha_{\min}\leq \alpha\leq  \alpha_{\max} $ be a real number and, given $r>0$ let us write $$\mathcal{P}_{\alpha}(r,\varepsilon)=\sup\#\left\{\mathcal{T} \right\},$$
where $\mathcal{T}$ is a $r$-packing of $\supp(\pi_2(\mu_0))$ with, for every $B \in\mathcal{T}$, $$\vert B \vert^{\alpha+\varepsilon} \leq \pi_2\mu_0(B)\leq \vert B \vert^{\alpha-\varepsilon}.$$ 
There exists $r_{\alpha}>0$ such that for every $r\leq r_{\alpha},$ one has $$r^{-D_{\pi_2(\mu_0)}(\alpha)+\varepsilon}\leq \mathcal{P}_{\alpha}(r,\varepsilon)\leq r^{-D_{\pi_2(\mu_0)}(\alpha)-\varepsilon} .$$
\end{lemme}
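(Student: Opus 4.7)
The plan is to derive Lemma~\ref{LemmaMultifracForma} as a direct specialization of the large deviation estimate of Theorem~\ref{Thmlargedev} applied to the measure $\pi_2(\mu_0)$. The proof therefore reduces to two verifications: that $\pi_2(\mu_0)$ satisfies the hypotheses of Theorem~\ref{Thmlargedev}, and that the definitions of the packing count agree.

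First, I recall from the discussion following Definition~\ref{Defmiss} that $\pi_2(\mu_0)$ is a self-similar measure on $[0,1]$, associated with the one-dimensional IFS $\{g_{0,i}\}_{0\leq i\leq b-1}$ (restricted to indices $i$ with $p_{i,\mathcal{A}}>0$) and the probability vector $(p_{i,\mathcal{A}})_{0\leq i\leq b-1}$. The maps $g_{0,i}\colon x\mapsto (x+i)/b$ are homotheties of ratio $1/b$ whose images of $(0,1)$ are the pairwise disjoint open intervals $(i/b,(i+1)/b)$; hence the open set condition holds with $O=(0,1)$. Thus Theorem~\ref{Thmlargedev} applies to $\pi_2(\mu_0)$ and produces, for each $\alpha\in[\alpha_{\min},\alpha_{\max}]$, some $r_\alpha>0$ such that for all $r\leq r_\alpha$ the desired two-sided bound $r^{-D_{\pi_2(\mu_0)}(\alpha)+\varepsilon}\leq \mathcal{P}_\alpha(r,\varepsilon)\leq r^{-D_{\pi_2(\mu_0)}(\alpha)-\varepsilon}$ holds.

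The only nominal point to settle is that Theorem~\ref{Thmlargedev} supremizes over \emph{maximal} admissible $r$-packings, whereas the present lemma supremizes over all admissible $r$-packings (admissible meaning every ball $B$ in the packing satisfies $|B|^{\alpha+\varepsilon}\leq \pi_2(\mu_0)(B)\leq |B|^{\alpha-\varepsilon}$). These two suprema coincide: every maximal admissible packing is a fortiori admissible, and conversely any admissible packing can be extended greedily (or via Zorn's lemma) to a maximal admissible packing of at least the same cardinality. The bounds therefore transfer verbatim to the packing count of the lemma, concluding the proof. There is no genuine obstacle here; the lemma is a restatement of Theorem~\ref{Thmlargedev} in the projected setting, and the brief verification of OSC for the one-dimensional IFS is all that is required.
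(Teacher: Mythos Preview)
Your proof is correct and follows exactly the approach indicated in the paper: the lemma is just Theorem~\ref{Thmlargedev} specialized to $\pi_2(\mu_0)$, which is a self-similar measure satisfying the open set condition. Your additional remark reconciling the supremum over maximal packings with the supremum over all admissible packings is a helpful clarification that the paper leaves implicit.
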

Let us fix first $N$ large enough so that for every $0\leq i\leq \lfloor \frac{\alpha_{\max}-\alpha_{\min}}{\varepsilon} \rfloor+1,$  
 $r_{\alpha_i}\geq 2^{-N+1}.$ In addition, for $n\in\mathbb{N},$ we fix a packing $\mathcal{P}_{i,n,\varepsilon}$ of $$E_{i,n,\varepsilon}:=\left\{x : \ \vert B(x,2^{-n}) \vert^{\alpha_i+\varepsilon} \leq \pi_2(\mu_0)(B(x,2^{-n}))\leq \vert B(x,2^{-n}) \vert^{\alpha_i-\varepsilon}\right\}$$
by balls centered on $E_{i,n,\varepsilon}$ and of radius $2^{-n-2}.$ Notice that $$E_{i,n,\varepsilon}\subset \bigcup_{B\in\mathcal{P}_{i,N,\varepsilon}}2B$$
and  that 
$$\bigcup_{0\leq i\leq \lfloor \frac{\alpha_{\max}-\alpha_{\min}}{\varepsilon} \rfloor+1 }E_{i,n,\varepsilon}=\pi_2(K).$$
Let $D$ be a $b$-adic cude with $\mu_0(D)= \vert D \vert^{s_0}$ and $2^{\frac{n-1}{\tau_1}}\leq k\leq 2^{\frac{n}{\tau_1}}$ such that $X_k \in D.$

In order to estimates $\mathcal{H}^{s}_{\infty}\Big(R_k \cap K\Big)$, for $s \geq 0,$ we will distinguish two cases:

\bigskip

 \textbf{Case 1: $X_k$ is near the boundary of $D$} 

Recall that for $i=1,0,$ $$\alpha_i = \lim_{r \to 0^+}\frac{\log \pi_2 \mu_0\Big(B(i,r)\Big)}{\log r}=\frac{-\log p_i}{\log b}$$
was assumed to satisfy $\alpha_0 \leq \alpha_1.$

Let $X_k$ be such that  
 $$X_k \in f_D\Big([0,1]\times[0,b^{-(\tau-1)n}]\cup [0,1]\times[1-b^{-(\tau-1)n},1]\Big).$$
 In this case, if $R_k:=R_{\tau_1,\tau_2}(X_k,\frac{1}{k})$ intersects $D$ and an other cube $D'$ of generation $n$, one has
\begin{align*}
R_k \subset S_{D,D'}:=&f_D\Big([0,1]\times[0,b^{-(\tau-1)(n-1)}]\cup [0,1]\times[1-b^{-(\tau-1)(n-1)},1]\Big)\\
&\bigcup f_{D'}\Big([0,1]\times[0,b^{-(\tau-1)(n-1)}]\cup [0,1]\times[1-b^{-(\tau-1)(n-1)},1]\Big).
\end{align*} 
so that $$\mathcal{H}^{s}_{\infty}(R_k \cap K)\leq b^{-(n-1)\max\left\{s,s\tau -(\tau-1)(s_0 -\alpha_0)\right\}}.$$
 And, since the coding of $0$ has only $0$'s as digits, by self-similarity, $$\mu_0\Big(S_{D,D'}\Big)\leq 2\times \mu_0(D)\times b^{-(\tau-1)(n-1)\alpha_0},$$
 one has $$ E[\#\left\{2^{\frac{n-1}{\tau_1}}\leq k\leq 2^{\frac{n}{\tau_1}}: \ X_k \in S_{D,D'}\right\}]\leq 2^{\frac{n}{\tau_1}} \times 2\mu_0(D)\times b^{-(\tau-1)(n-1)\alpha_0}.$$
 This yields that 
\begin{align*}
 E[\#\left\{2^{\frac{n-1}{\tau_1}}\leq k\leq 2^{\frac{n}{\tau_1}}: \ X_k \in \bigcup_{D,D'} S_{D,D'}\right\}]\leq C  \times b^{-n((\tau-1)\alpha_0 -\frac{1}{\tau_1})}.
\end{align*} 
 By Markov's inequality,  $$\mathbb{P}\Big(\#\left\{b^{\frac{n-1}{\tau_1}}\leq k\leq b^{\frac{n}{\tau_1}}: \ X_k \in \bigcup_{D,D'} S_{D,D'}\right\}\geq b^{n\varepsilon} b^{-n((\tau-1)\alpha_0 -\frac{1}{\tau_1})}\Big)\leq b^{-n\varepsilon},$$
 which, by Borel cantelli lemma, implies that almost surely, there exists $n$ large enough so that $$\#\left\{b^{\frac{n-1}{\tau_1}}\leq k\leq b^{\frac{n}{\tau_1}}: \ X_k \in \bigcup_{D,D'} S_{D,D'}\right\}\leq  b^{-n((\tau-1)\alpha_0 -\frac{1}{\tau_1}-\varepsilon)}.$$
 
Notice that if  $(\tau-1)\alpha_0 -\frac{1}{\tau_1}-\varepsilon>0,$ then almost surely, for $n$ large enough, $$\#\left\{b^{\frac{n-1}{\tau_1}}\leq k\leq b^{\frac{n}{\tau_1}}: \ X_k \in \bigcup_{D,D'} S_{D,D'}\right\}=0,$$
so that we may assume that $(\tau-1)\alpha_0 -\frac{1}{\tau_1}-\varepsilon\leq 0,$
 
 Thus if $s$ is solution to $\max\left\{s,s\tau -(\tau-1)(s_0 -\alpha_0)\right\}=-(\tau-1)\alpha_0 +\frac{1}{\tau_1}+\varepsilon,$ one has for every $\kappa>1,$
 $$\sum_{X_k \in \bigcup_{D,D'} S_{D,D'}}\mathcal{H}^{s\kappa}_{\infty}\Big(R_k\cap K\Big)\leq \sum_{n\geq 1}b^{-n((\tau-1)\alpha_0 -\frac{1}{\tau}-\varepsilon)}b^{n\kappa((\tau-1)\alpha_0 -\frac{1}{\tau}-\varepsilon)) }<+\infty.$$ 
 We conclude that $$\dim_H \limsup_{X_k \in \bigcup_{D,D'} S_{D,D'}}R_k \leq \kappa s $$
for any $\kappa,$ so that  $$\dim_H \limsup_{X_k \in \bigcup_{D,D'} S_{D,D'}}R_k \leq  s.$$
In addition, recall that for any $\alpha \in \mbox{Spectr}(\pi_2 \mu_0)$, $\alpha-D_{\pi_2 \mu_0} \geq 0$ and that $s_{\alpha_0}$ is solution to the equation $$\max\left\{s,s\tau -(\tau-1)(s_0 -\alpha_0)\right\}=-(\tau-1)(\alpha_0-D_{\pi_{2}(\mu_0)}(\alpha_0)) +\frac{1}{\tau_1}.$$
This yields $s \leq s_{\alpha_0}+\varepsilon$ provided that $\varepsilon$ was chosen small enough to begin with.

\bigskip

\textbf{Case 2: $X_k$ is not near the boundary of $D$}

Assume now that $X_k \in D \setminus  f_D\Big([0,1]\times[0,b^{(\tau-1)n}]\cup [0,1]\times[1-b^{(\tau-1)n},1]\Big)$ and recall that, by self-similarity, one has $$\mathcal{H}^{s}_{\infty}(R\cap K \cap D)\leq \mathcal{H}^{s}_{\infty}(R\cap K)\leq 3\mathcal{H}^{s}_{\infty}(R\cap K \cap D).$$
Recall that

 $$D \cap K \subset \bigcup_{0\leq i\leq  \frac{\alpha_{\max}-\alpha_{\min}}{\varepsilon} \rfloor+1}f_{D}\circ \pi_{2}^{-1}\Big( \bigcup_{B \in \mathcal{P}_{i,\lfloor(\tau-1)n\rfloor -1,\varepsilon}}B\Big) .$$

Fix $0\leq i\leq \lfloor \frac{\alpha_{\max}-\alpha_{\min}}{\varepsilon} \rfloor+1$ and assume that $X_k \in  f_{D}\circ \pi_2^{-1}(B),$ where $B \in \mathcal{P}_{i,\lfloor(\tau-1)n\rfloor -1,\varepsilon}.$ By definition of $\mathcal{P}_{i,\lfloor (\tau-1)n\rfloor-1,\varepsilon}$ and by self-similarity, one has $$\mu_0(f_{D}\circ \pi_2^{-1}(2B))\leq (2\vert B\vert)^{\alpha_i-\varepsilon}\times \mu_0(D).$$
In addition, if $C$ is a $b$-adic cube of generation $\lfloor n \tau \rfloor$ intersecting $R\cap D \cap K,$ then $C \subset f_{D}\circ \pi_2^{-1}(2B).$ We conclude that 

\begin{align*}
\#\left\{C\in \mathcal{D}_{\lfloor n \tau \rfloor} : C\cap R\cap D \cap K \neq \emptyset\right\}&\leq  \frac{\mu_0\Big(f_{D}\circ \pi_2^{-1}(2B)\Big)}{b^{-\lfloor n \tau \rfloor s_0}}\leq C b^{n\tau s_0-n s_0 -(\tau-1)(\alpha_i -\varepsilon)}\\
&\leq C b^{n(\tau-1)(s_0 -\alpha_i +\varepsilon)}.
\end{align*}

By covering $R\cap D \cap K$ by $b$-adic cubes of generation $\lfloor n \tau \rfloor$ and $n$, we obtain that $$\mathcal{H}^{s}_{\infty}(R\cap D \cap K)\leq b^{-n\max\left\{s,s\tau-(\tau-1)(s_0-\alpha_i+\varepsilon)\right\}}.$$

Moreover, by self-similarity, 

\begin{align*}
&\mu_0\Big( f_{D}\circ \pi_{2}^{-1}\Big( \bigcup_{B \in \mathcal{P}_{i,\lfloor(\tau-1)n\rfloor -1,\varepsilon}}B\Big)\Big)\\
&\leq \mu_0(D)\times \#\left\{\mathcal{P}_{i,\lfloor(\tau-1)n\rfloor -1,\varepsilon}\right\} \times \Big(2 \vert B\vert\Big)^{\alpha_i -\varepsilon}\leq C \mu_0(D)\times b^{-n((\tau-1)(\alpha_i -\varepsilon)-(\tau-1)(D_{\pi_2(\mu_0)}+\varepsilon))}\\
&\leq C\mu_0(D)b^{-(\tau-1)n(\alpha_i -D_{\pi_2(\mu_0)}(\alpha_i)-2\varepsilon)}.
\end{align*}
Writing $$\begin{cases}G_{n,i,\varepsilon}= \bigcup_{D \in \mathcal{D}_n} f_{D}\circ \pi_{2}^{-1}\Big( \bigcup_{B \in \mathcal{P}_{i,\lfloor(\tau-1)n\rfloor -1,\varepsilon}}B\Big)\setminus  f_D\Big([0,1]\times[0,b^{(\tau-1)n}]\cup [0,1]\times[1-b^{(\tau-1)n},1]\Big) \\

U_{n,i,\varepsilon}=\left\{b^{\frac{(n-1)}{\tau_1}}\leq k\leq b^{\frac{n}{\tau_1}} : \ X_k \in G_{n,i,\varepsilon}\right\} \end{cases} $$
\begin{align*}
E[\#\left\{U_{n,i,\varepsilon} \right\}]\leq b^{\frac{n}{\tau_1}}\times Cb^{-(\tau-1)n(\alpha_i -D_{\pi_2(\mu_0)}(\alpha_i)-2\varepsilon)}=C b^{-n(-\frac{1}{\tau_1}+(\tau-1)(\alpha_i -D_{\pi_{2}(\mu_0)}(\alpha_i)-2\varepsilon))}.
\end{align*}
Thus, by Markov's inequality and Borel-Cantelli lemma, almost surely,  there exists $n_{i,\varepsilon}$ large enough so that $$\#\left\{U_{n,i,\varepsilon}\right\}\leq  C b^{-n(-\frac{1}{\tau_1}+(\tau-1)(\alpha_i -D_{\pi_{2}(\mu_0)}(\alpha_i)-3\varepsilon))}.$$
Using the same argument as in case $(1)$, one assume that $\alpha_i$ is such that $$-\frac{1}{\tau_1}+(\tau-1)(\alpha_i -D_{\pi_{2}(\mu_0)}(\alpha_i)-3\varepsilon)\leq 0.$$

Let $s$ be the solution to $$\max\left\{s,s\tau-(\tau-1)(s_0-\alpha_i+\varepsilon)\right\}= \frac{1}{\tau_1}-(\tau-1)(\alpha_i -D_{\pi_{2}(\mu_0)}(\alpha_i)-3\varepsilon).$$
The same argument in case $(1)$ yields that for every $\kappa>1$ $$\sum_{n\geq 1 , k\in U_{n,i,\varepsilon}}\mathcal{H}^{\kappa s}_{\infty}(R_k \cap K)<+\infty,$$
so that $$\dim_H \limsup_{ k\in U_{n,i,\varepsilon}}R_k \leq s.$$
In addition, since $s_{\alpha_i}$ is solution to $$ $$ $$\max\left\{s,s\tau-(\tau-1)(s_0-\alpha_i)\right\}= \frac{1}{\tau_1}-(\tau-1)(\alpha_i -D_{\pi_{2}(\mu_0)}(\alpha_i)),$$
one has $s\leq s_{\alpha_i}+\varepsilon \leq \sup_{\alpha}s_{\alpha}+\varepsilon$ provided that $\varepsilon$ was chosen small enough to begin with.

\subsubsection{Proof of the lower-bound}

 Recall that, for $\alpha \in \mbox{Spectr}(\pi_2 \mu_0),$ $$s_{\alpha}=\min\left\{\frac{1-(\tau_2-\tau_1)(\alpha-D_{\pi_2 \mu_0}(\alpha))}{\tau_1},\frac{1+(\tau_2-\tau_1)(s_0 -2\alpha +D_{\pi_2 \mu_0}(\alpha))}{\tau_2}\right\}.$$

The following fact is a consequence of Proposition \ref{propriOSC}

\begin{proposition}
\label{propoMesauxi}
There exists $\nu_{\alpha}$, $\times b$-ergodic (hence exact-dimensional) such that
\begin{itemize}
\item[(1)]$\nu_{\alpha}\Big(\left\{x : \lim_{r\to 0^+}\frac{-\log \pi_2 \mu_0 (D_n(x))}{n\log b}=\alpha\right\}\Big)=1,$\medskip
\item[(2)] $\dim_H \nu_{\alpha}=D_{\pi_2 \mu_0}(\alpha).$
\end{itemize}

\end{proposition}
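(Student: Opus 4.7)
The natural candidate for $\nu_{\alpha}$ is the auxiliary self-similar measure $\mu_q$ on $[0,1]$ associated with the IFS $\{g_{0,i}\}_{0\le i\le b-1}$ and the reweighted probability vector $(p_{i,\mathcal{A},q})_{0\le i\le b-1}$, where $q\in\R$ is chosen so that $\kappa_{q,\mathcal{A}}=\alpha$. The point is that the maps $g_{0,i}:y\mapsto (y+i)/b$ are precisely the inverse branches of $T_b$, so the projection from $\{0,\dots,b-1\}^{\N}$ to $[0,1]$ conjugates the one-sided shift and $T_b$; any Bernoulli self-similar measure for the IFS $\{g_{0,i}\}$ is therefore $T_b$-invariant and ergodic, hence also exact-dimensional by Feng--Hu. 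Thus once $\mu_q$ is identified with a Bernoulli measure, its $\times b$-ergodicity is automatic.

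First I would apply Proposition \ref{propriOSC} to $\pi_2\mu_0$ (which is self-similar on $[0,1]$ and satisfies the OSC by construction): this produces the parametrization $q\mapsto\kappa_{q,\mathcal{A}}$, which is a continuous bijection from $\R$ onto the open interior of $\mbox{Spectr}(\pi_2\mu_0)=[\alpha_{\min},\alpha_{\max}]$, and gives directly $\mu_q(E_{\kappa_{q,\mathcal{A}}})=1$ together with $\dim_H\mu_q=\theta_q=D_{\pi_2\mu_0}(\kappa_{q,\mathcal{A}})$. Hence for $\alpha$ in the interior of the spectrum, setting $\nu_{\alpha}=\mu_q$ with $\kappa_{q,\mathcal{A}}=\alpha$ gives at once property $(1)$ (the $\nu_{\alpha}$-almost sure local dimension of $x$ with respect to $\pi_2\mu_0$ equals $\alpha$) and property $(2)$ ($\dim_H\nu_{\alpha}=D_{\pi_2\mu_0}(\alpha)$), while ergodicity under $T_b$ follows from the Bernoulli structure noted above.

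The only step that is not immediately covered by Proposition \ref{propriOSC} is the handling of the boundary values $\alpha\in\{\alpha_{\min},\alpha_{\max}\}$, since the parametrization $q\mapsto\kappa_{q,\mathcal{A}}$ only sweeps the open interior. For these, I would use a direct Bernoulli construction: let $\mathcal{J}_{\min}$ (resp.\ $\mathcal{J}_{\max}$) be the set of digits $i\in\{0,\dots,b-1\}$ with $p_{i,\mathcal{A}}$ maximal (resp.\ minimal among the nonzero $p_{i,\mathcal{A}}$), and take $\nu_{\alpha_{\min}}$ (resp.\ $\nu_{\alpha_{\max}}$) to be the Bernoulli measure assigning weight $1/\#\mathcal{J}_{\min}$ (resp.\ $1/\#\mathcal{J}_{\max}$) to each digit in $\mathcal{J}_{\min}$ (resp.\ $\mathcal{J}_{\max}$). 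A direct Birkhoff computation shows that for $\nu_{\alpha_{\min}}$-almost every $x$, $\pi_2\mu_0(D_n(x))=b^{-n\alpha_{\min}+o(n)}$, while $\dim_H\nu_{\alpha_{\min}}=\log(\#\mathcal{J}_{\min})/\log b=D_{\pi_2\mu_0}(\alpha_{\min})$ by the usual identification of the multifractal spectrum at its endpoints, and similarly for $\alpha_{\max}$.

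I do not expect any serious technical obstacle: the whole statement is essentially a repackaging of Proposition \ref{propriOSC}, with the observation that the auxiliary measures $\mu_q$ are Bernoulli, hence $T_b$-ergodic. The only mild subtlety is the treatment of the spectrum endpoints, where one has to construct the Bernoulli measure by hand on the sub-alphabet of extremal digits rather than relying on the $q$-parametrization.
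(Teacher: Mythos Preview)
Your proposal is correct and matches the paper's approach: the paper does not give a separate proof but simply states that the proposition is a consequence of Proposition~\ref{propriOSC}, i.e.\ exactly the $\mu_q$-construction you describe, together with the observation that these Bernoulli measures are $\times b$-ergodic. Your additional handling of the spectrum endpoints and the explicit remark on ergodicity are the natural details one would fill in, and present no difficulty.
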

By Proposition \ref{propoMesauxi}, there exists $N\in\mathbb{N}$ such that the set

 $$F_{N,\varepsilon,\alpha}=\left\{x : \ \forall n\geq N, \begin{cases}  b^{-n(D_{\pi_2 \mu_0}(\alpha)+\varepsilon)}\leq\nu_{\alpha}(D_n(x))\leq b^{-n(D_{\pi_2 \mu_0}(\alpha)-\varepsilon)}\\ b^{-n(\alpha+\varepsilon)}\leq \pi_2\mu_0(D_n(x))\leq b^{-n(\alpha-\varepsilon)} \end{cases} \right\}$$
satisfies $$\nu_{\alpha}(F_{N,\varepsilon,\alpha})\geq \frac{1}{2}.$$
Notice that, for any $n\in\mathbb{N}$ and $x \in \supp(\nu_{\alpha})\subset \left\{0\right\}\times [0,1],$ $$\pi_2^{-1}(D_n(x))\cap K =[0,1]\times I \cap K,$$
where $\left\{0\right\}\times I=D_n(x) \cap  \left\{0\right\}\times [0,1].$ Hence, $$C^{-1}\leq\frac{\mathcal{H}^{s}_{\infty}([0,1]\times I \cap K)}{\inf_{0\leq \gamma \leq 1} b^{\gamma n s_0}\times \pi_2\mu_0(D_{\lfloor\gamma n \rfloor}(x))b^{-n\gamma s}}\leq C.$$
Thus, for any $x\in F_{N,\alpha,\varepsilon},$ for any $n\geq N$ and $\gamma \geq \frac{n}{N},$ one has $$b^{-\gamma n(\alpha+\varepsilon)} \leq\pi_2\mu_0(D_{\lfloor\gamma n \rfloor}(x)) \leq b^{-\gamma n(\alpha-\varepsilon)}$$
and there exists $\kappa_N >0$ such that $$\kappa_{N}^{-1}\leq\inf_{\frac{n}{N}\leq \gamma \leq 1} b^{-\gamma n s_0}\times \pi_2\mu_0(D_{\lfloor\gamma n \rfloor}(x))b^{-n\gamma s} \leq \kappa_N.$$
This yields 
\begin{align*}
\inf_{0\leq \gamma \leq \frac{n}{N}}  b^{-\gamma n(s-s_0+\alpha+\varepsilon)}\leq\inf_{0\leq \gamma \leq \frac{n}{N}} b^{\gamma n s_0}\times \pi_2\mu_0(D_{\lfloor\gamma n \rfloor}(x))b^{-n\gamma s} \leq \inf_{0\leq \gamma \leq \frac{n}{N}}  b^{-\gamma n(s-s_0+\alpha-\varepsilon)}
\end{align*}
If $s \geq s_0 -\alpha +\varepsilon$ or $s\leq s_0 -\alpha -\varepsilon,$ both $\gamma\mapsto\gamma(s-(s_0-\alpha-\varepsilon))$ and $\gamma\mapsto\gamma(s-(s_0-\alpha+\varepsilon))$ are non increasing or non decreasing. We conclude that, for any such $s$, $$ \frac{1}{C\log n}\min\left\{\kappa_N, b^{- n(s-s_0+\alpha+\varepsilon)}\right\} \leq\mathcal{H}^{s}_{\infty}([0,1]\times I \cap K)\leq C\log(n) \min\left\{\kappa_N,b^{- n(s-s_0+\alpha-\varepsilon)}\right\} .$$ 
Moreover, since the content is non increasing in $s$, for any $s_0 -\alpha -\varepsilon \leq s\leq s_0 -\alpha +\varepsilon,$ one has $$ \frac{1}{C\log n}\min\left\{\kappa_N, b^{- n(s-s_0+\alpha+2\varepsilon)}\right\} \leq\mathcal{H}^{s}_{\infty}([0,1]\times I \cap K)\leq C\log(n) \min\left\{\kappa_N,b^{- n(s-s_0+\alpha-2\varepsilon)}\right\} $$ 
so that the latest inequalities holds for any $s\leq s_0.$
From now on, we consider $n$ large enough so that $(\tau-1)n \geq N+1$ and $$\mathcal{P}_{(\tau-1)n}=\left\{D_{\lfloor (\tau-1)n \rfloor +1}(x),x\in F_{N,\varepsilon,\alpha} \right\}.$$
It follows from the definition of $F_{N,\varepsilon,\alpha}$ that

$$\begin{cases}\#\mathcal{P}_{(\tau-1)n} \geq b^{(\tau-1)n(D_{\pi_2 \mu_0}(\alpha)-\varepsilon)} \\ \forall I \in \mathcal{P}_{(\tau-1)n}, \ b^{-n(\tau-1)(\alpha+\varepsilon)}\leq \pi_2\mu_0(D_n(x))\leq b^{-n(\tau-1)(\alpha-\varepsilon)}.\end{cases} $$
Hence, we obtain $$\pi_2 \mu_0\Big(\bigcup_{I \in \mathcal{P}_{(\tau-1)n} }I\Big)\geq b^{-n(\tau-1)(\alpha -D_{\pi_2 \mu_0}(\alpha)+2\varepsilon)}.$$

Let us write $$\delta_{\alpha}=\frac{s_0}{\frac{1}{\tau_1}-(\tau -1)(\alpha-D_{\pi_2 \mu_0}(\alpha))} .$$
Notice that, since $\tau_1 \geq\frac{1}{s_0}$, $\tau\geq 1$ and $\alpha-D_{\pi_2 \mu_0}(\alpha)\geq 0,$ one has $\delta_{\alpha}\geq 1.$ 

Fix $x\in K$ and set $k=\lfloor \frac{n}{\delta_{\alpha}+\varepsilon} \rfloor$ For any $D\subset D_{k}(x)$ with $D \in \mathcal{D}_{n},$   by self-similarity, one has  $$\mu_0 \Big(f_D\Big(\pi_2^{-1}\Big(\bigcup_{I \in \mathcal{P}_{(\tau-1)n} }I\Big)\Big)\Big)=\mu_0(D)\times \pi_2 \mu_0\Big(\bigcup_{I \in \mathcal{P}_{(\tau-1)n} }I\Big).$$
This yields that 
\begin{align*}
&\mu_0\Big(\bigcup_{D\in\mathcal{D}_n, D\subset D_{k}(x) }(f_D\Big(\pi_2^{-1}\Big(\bigcup_{I \in \mathcal{P}_{(\tau-1)n} }I\Big)\Big)\Big)=\mu(D_{k}(x))\times  \pi_2 \mu_0\Big(\bigcup_{I \in \mathcal{P}_{(\tau-1)n} }I\Big)\\
&\geq b^{-n\Big(\frac{s_0}{(1+\varepsilon)\delta_{\alpha}}+(\tau-1)(\alpha -D_{\pi_2 \mu_0}(\alpha)+2\varepsilon)\Big)}=b^{-n(\frac{1}{(1+\varepsilon)\tau_1})+(\tau-1)((1-\frac{1}{1+\varepsilon})(\alpha -D_{\pi_2 \mu_0}(\alpha))+2\varepsilon)}\geq b^{\frac{-n}{(1+\frac{\varepsilon}{2})\tau_1}}
\end{align*}
provided that $\varepsilon$ was chosen small enough to begin with. Write $$A_k =\bigcup_{D\in\mathcal{D}_n, D\subset D_{k}(x) }(f_D\Big(\pi_2^{-1}\Big(\bigcup_{I \in \mathcal{P}_{(\tau-1)n} }I\Big)\Big).$$

Then, for any $p\in\mathbb{N}$ and any constant $C>0$ and $q\geq Cb^{\frac{n}{\tau_1}},$

\begin{equation}
\label{EquaInde}
\mathbb{P}\Big(X_p,...,X_{p+q}\notin A_k\Big)\leq (1-\mu_0(A_k))^{Cb^{\frac{n}{\tau_1}}}\leq e^{-Cb^{\frac{n}{\tau_1}}\mu_0(A_k)}.
\end{equation}
Since $b^{\frac{n}{\tau_1}}\mu_0(A_k)\geq b^{\frac{n}{\tau_1}(1-\frac{1}{1+\varepsilon})}\to +\infty.$  Since $\#\left\{b^{\frac{n}{\tau_1}}<p \leq b^{\frac{n+1}{\tau_1}}\right\}\geq Cb^{\frac{n}{\tau_1}},$ by Borel-Cantelli we obtain that almost surely, for every large enough $k$, there exists $b^{\frac{n}{\tau_1}}\leq p \leq b^{\frac{n+1}{\tau_1}}$ such that $X_p \in A_k.$ Now, fixing $D\in\mathcal{D}_n$ such that $D\subset D_{k}(x),$ $I\in\mathcal{P}_{(\tau-1)n}$ and $X_p \in f_D(\pi_2^{-1}(I)).$ Notice that $f_D\Big(\pi_2^{-1}(I)\cap [0,1]^2\Big)$ is a rectangle, product of a $b$-adic interval of generation $n$ with a $b$-adic interval of generation $\tau n.$ Since $p\leq 2^{\frac{n-1}{\tau_1}},$ one has $$\begin{cases} \frac{1}{k^{\tau_1}}\geq 2^{-n} \\ \frac{1}{k^{\tau_2}}\geq 2^{-n \tau}\end{cases}$$
which yields that $f_D\Big(\pi_2^{-1}(I)\cap [0,1]^2\Big)\subset R_k$ and, provided that $n$ is large enough,
\begin{align*}
\mathcal{H}^{s}_{\infty}\Big(R_k \cap K\Big)&\geq \mathcal{H}^{s}_{\infty}\Big(f_D\Big(\pi_2^{-1}(I)\cap [0,1]^2\Big)\Big)\\
&\geq \frac{C}{\log((\tau-1)n)}\vert D \vert^s \min\left\{\kappa_N ,b^{-n(\tau-1)(s-s_0 +2\varepsilon)}\right\} \\
&\geq \theta_N \min\left\{b^{-n(1+\varepsilon)s},b^{-n(\tau-1)(s-s_0 +3\varepsilon)}\right\}\\
&=\theta_N b^{-n \max\left\{s(1+\varepsilon),\tau s-(\tau-1)(s_0 -\alpha- 3\varepsilon)\right\}}.
\end{align*}
Also, $$s_0 k =\frac{s_0 n}{(1+\varepsilon)\delta_{\alpha}}=\frac{1}{1+\varepsilon}\times\Big(\frac{1}{\tau_1}-(\tau-1)(\alpha -D_{\pi_2(\mu_0)}(\alpha))\Big)$$
so that, if $s$ is solution to 

\begin{equation}
\label{equas}
\frac{1}{1+\varepsilon}\times\Big(\frac{1}{\tau_1}-(\tau-1)(\alpha -D_{\pi_2(\mu_0)}(\alpha))\Big)=  \max\left\{s(1+\varepsilon),\tau s-(\tau-1)(s_0 -\alpha- 3\varepsilon)\right\},
\end{equation}
for every $x$, almost surely, for every large enough $k\in \mathbb{N}$, there exists $p$ such that $X_p \in D_k(x)$ and 
$$\mathcal{H}^{s}_{\infty}(R_p \cap K)\geq \mu_0(D_{k}(x). $$

Note also that $$D_k(x) \cup R_p \subset B\Big(X_p,\frac{1}{p^{\frac{\tau_1}{(1+\varepsilon)\delta_{\alpha}}}}\Big)\text{ and } \frac{1}{p^{\frac{\tau_1}{(1+\varepsilon)\delta_{\alpha}}}}\leq b^{2}\vert D_n(x)\vert$$
which implies in particular that $x\in B\Big(X_p,\frac{1}{p^{\frac{\tau_1}{(1+\varepsilon)\delta_{\alpha}}}}\Big) $ and  $$\mathcal{H}^{s}_{\infty}(R_p \cap K)\geq C\mu_0\Big(B\Big(X_p,\frac{1}{p^{\frac{\tau_1}{(1+\varepsilon)\delta_{\alpha}}}}\Big)\Big)$$
  so that, almost surely, $$x \in \limsup_{ p : \ \mathcal{H}^{s}_{\infty}(R_p \cap K)\geq C\mu_0\Big(B\Big(X_p,\frac{1}{p^{\frac{\tau_1}{(1+\varepsilon)\delta_{\alpha}}}}\Big)\Big)}B\Big(X_p,\frac{1}{p^{\frac{\tau_1}{(1+\varepsilon)\delta_{\alpha}}}}\Big).$$
  Thus, by Fubini,
\begin{align*}
&\int \mathbb{P}\Big(\limsup_{ p : \ \mathcal{H}^{s}_{\infty}(R_p \cap K)\geq C\mu_0\Big(B\Big(X_p,\frac{1}{p^{\frac{\tau_1}{(1+\varepsilon)\delta_{\alpha}}}}\Big)\Big)}B\Big(X_p,\frac{1}{p^{\frac{\tau_1}{(1+\varepsilon)\delta_{\alpha}}}}\Big)\Big)d\mu_0(x)=1 \\
&\Leftrightarrow \int  \mu_0\Big(\limsup_{ p : \ \mathcal{H}^{s}_{\infty}(R_p \cap K)\geq C\mu_0\Big(B\Big(X_p,\frac{1}{p^{\frac{\tau_1}{(1+\varepsilon)\delta_{\alpha}}}}\Big)\Big)}B\Big(X_p,\frac{1}{p^{\frac{\tau_1}{(1+\varepsilon)\delta_{\alpha}}}}\Big)\Big)d\mathbb{P}=1
\end{align*}
and almost surely, $$\mu_0\Big(\limsup_{ p : \ \mathcal{H}^{s}_{\infty}(R_p \cap K)\geq C\mu_0\Big(B\Big(X_p,\frac{1}{p^{\frac{\tau_1}{(1+\varepsilon)\delta_{\alpha}}}}\Big)\Big)}B\Big(X_p,\frac{1}{p^{\frac{\tau_1}{(1+\varepsilon)\delta_{\alpha}}}}\Big)\Big)=1.$$
Theorem \ref{MTPss} yields $$\dim_H \limsup_{k\to +\infty}R_k \geq s.$$
Recalling that $s$ satisfies \eqref{equas} and that $s_{\alpha}$ is solution to the equation $$\Big(\frac{1}{\tau_1}-(\tau-1)(\alpha -D_{\pi_2(\mu_0)}(\alpha))\Big)=  \max\left\{s,\tau s-(\tau-1)(s_0 -\alpha)\right\},$$
one has $s\geq s_{\alpha}-\varepsilon$ provided that $\varepsilon$ was chosen small enough. Since, $\varepsilon, \alpha$ are arbitrary, Theorem \ref{ThmRand} is proved.

\subsubsection{Dealing with the case of orbits}
\label{SectionDyna}
In this section we explain how to adapt the proof made in the case where $(X_n)_{n\in\mathbb{N}}$ is i.i.d. of law $\mu_0$ to the case where $X_n =T_b^{n}(x),$ where $x$ is $\mu_0$-typical and $T_b: \mathbb{T}^2 \to \mathbb{T}^2$ is defined by $T_b(x)=bx$. 

First, notice that the independency of the sequence $(X_n)_{n\in\mathbb{N}}$ was not used to establish the upper-bound. Thus, the argument readily applies to $(Y_n =b^n x)_{n\in\mathbb{N}}.$

However, the independency is used while estimating the lower-bound in \eqref{EquaInde}. With the same notations as in the corresponding subsection, the idenpendency was used  to establish that given any $b$-adic cube $D \in\mathcal{D}_{\lfloor \frac{n}{(1+\varepsilon)\delta_{\alpha}} \rfloor}$ with $D\cap K \neq \emptyset$ if $$\mathcal{R}_D=\left\{D'\in\mathcal{D}_{n}, D'\cap K \neq \emptyset f_{D'}\Big(\pi_2^{-1}(B)\Big), B\in \mathcal{P}_{\lfloor (\tau-1)n \rfloor} \right\},$$
then almost surely there exists $b^{\frac{n-1}{\tau_1}}\leq k \leq b^{\frac{n}{\tau_1}}$ such that $X_k \in \bigcup_{R \in\mathcal{R}_D}R.$

It is established in \cite[Proposition 3.10]{Edergo} that there exists $C>0$ and $\tau_1$ such that for every ball $A$ and Borel set $B$, for every $n\in\mathbb{N},$ one has  $$\mu_0\Big(T_b^{-n}(B)\cap A\Big) \leq \mu(A)\times \mu(B) +C\tau^{n}\mu(B).$$
Thus, as $\#\left\{R_D\right\} $ is polynomial in $\vert D \vert,$ the same argument as in \cite[Lemma 7.6]{Edergo} holds to show that the same conclusion as in the case where $(X_n)_{n\in\mathbb{N}}$ was an i.i.d. sequence holds true.

\bigskip

\section{Acknowledgments}

The author would like to thank   D.J. Feng and S.Seuret for sharing the random covering problem of self-similar sets by rectangles with him.

\bibliographystyle{plain}
\bibliography{bibliogenubi}

\end{document}